\definecolor{red}{rgb}{1,0,0}
\definecolor{purple}{rgb}{0.63, 0.36, 0.94}
\definecolor{Kelly}{rgb}{.1,.65,.1}
\def\Xint#1{\mathchoice
{\XXint\displaystyle\textstyle{#1}}%
{\XXint\textstyle\scriptstyle{#1}}%
{\XXint\scriptstyle\scriptscriptstyle{#1}}%
{\XXint\scriptscriptstyle\scriptscriptstyle{#1}}%
\!\int}
\def\XXint#1#2#3{{\setbox0=\hbox{$#1{#2#3}{\int}$ }
\vcenter{\hbox{$#2#3$ }}\kern-.6\wd0}}
\def\dashint{\Xint-}
\DeclareFontFamily{U}{mathx}{}
\DeclareFontShape{U}{mathx}{m}{n}{<-> mathx10}{}
\DeclareSymbolFont{mathx}{U}{mathx}{m}{n}
\DeclareMathAccent{\widehat}{0}{mathx}{"70}
\DeclareMathAccent{\widecheck}{0}{mathx}{"71}
\def\opE{\mathfrak E} 
\def\opD{\mathfrak D}
\def\opR{\mathfrak R}
\def\rS{{\mathrm S}}
\def\rs{{\mathrm s}}
\newcommand\Leg{ {\mathcal P}} 
\newcommand\Legq{\mathcal Q}
\newcommand\Legu{ {\mathcal P}^1} 
\def\Dx{\delta}		
\def\Fop{\mathtt F} 
\def\Euler{\mathtt E}
\def\Phit{\widetilde\Phi}
\newcommand\uM{\mathsf M} 
\newcommand\uL{\mathsf L} 
\newcommand\uN{\mathsf N} 
\newcommand\uP{\mathsf P} 
\newcommand{\uu}[1][\@nil]{%
 \def\tmp{#1}%
 \ifx\tmp\@nnil
 {u}
 \else
 {u}_{#1}
 \fi}
\newcommand\kk{k} 
\newcommand\speed{\beta} 
\def\ri{\mathrm{i}}
\def\vecV{\mathbf V}
\newcommand\VV{\mathbf V}
\newcommand\VW{\mathbf W}
\newcommand\VR{\mathbf R}
\newcommand\VH{\mathbf H}
\newcommand\VZ{\mathbf Z}
\newcommand\qV{\mathcal V}
\newcommand\qW{\mathcal W}
\newcommand\funH{{\mathscr H}} 
\newcommand\funA{{\mathscr A}} 
\newcommand{\dx}{\,{\rm d}x}
\newcommand{\ds}{\,{\rm d}s}
\newcommand{\sdown}{\check{s}} 
\def\gam{\gamma} 
\def\di{\partial}
\def\C{\mathbb C}
\def\R{\mathbb R}
\def\Z{\mathbb Z}
\newcommand\rU{\mathrm U}
\newcommand\bw{\mathbf w}
\newcommand\fsu{\mathfrak{su}}
\newcommand\sj{\mathsf j}
\newcommand\Ad{\operatorname{Ad}}
\newcommand\cl{\mathbf{cl}}
\newcommand\ve{\mathbf e} 
\newcommand\be{{\bf e}}
\newcommand\bp{{\bf p}}
\newcommand\bph{{\bp_H}} 
\newcommand\gamt{\widetilde{\gam}}
\newcommand\gamf{\widehat{\gam}} 
\newcommand\Gamf{\widehat{\Gamma}} 
\newcommand\kh{\widehat{\kk}} 
\newcommand\bert{\big\vert}
\newcommand\rd{\mathrm{d}}
\newcommand\rA{\mathrm{A}}
\newcommand\rC{\mathrm{C}}
\newcommand\tJ{\mathtt{J}}
\newcommand\tU{\mathtt{U}}
\newcommand\tV{\mathtt{V}}
\renewcommand\Re{\operatorname{Re}}
\renewcommand\Im{\operatorname{Im}}
\newcommand\cn{\operatorname{cn}}
\newcommand\dn{\operatorname{dn}}
\newcommand\tomedit[1]{{\color{PineGreen}#1}}
\numberwithin{equation}{section}
\newtheorem{Theorem}{Theorem}[section]
\newtheorem*{Theorem*}{Theorem}
\newtheorem{Corollary}[Theorem]{Corollary}
\newtheorem{Lemma}[Theorem]{Lemma}
\newtheorem{Proposition}[Theorem]{Proposition}
 { \theoremstyle{definition}
\newtheorem{Definition}[Theorem]{Definition}

\newtheorem{Remark}[Theorem]{Remark} }
\begin{document}
\allowdisplaybreaks

\renewcommand{\thefootnote}{}

\newcommand{\arXivNumber}{2308.10125}

\renewcommand{\PaperNumber}{027}

\FirstPageHeading

\ShortArticleName{mKdV-Related Flows for Legendrian Curves in the Pseudohermitian 3-Sphere}

\ArticleName{mKdV-Related Flows for Legendrian Curves \\ in the Pseudohermitian 3-Sphere\footnote{This paper is a~contribution to the Special Issue on Symmetry, Invariants, and their Applications in honor of Peter J.~Olver. The~full collection is available at \href{https://www.emis.de/journals/SIGMA/Olver.html}{https://www.emis.de/journals/SIGMA/Olver.html}}}

\Author{Annalisa CALINI~$^{\rm a}$, Thomas IVEY~$^{\rm a}$ and Emilio MUSSO~$^{\rm b}$}

\AuthorNameForHeading{A.~Calini, T.~Ivey and E.~Musso}

\Address{$^{\rm a)}$~Department of Mathematics, College of Charleston, Charleston, SC 29424, USA}
\EmailD{\href{mailto:calinia@cofc.edu}{calinia@cofc.edu}, \href{mailto:iveyt@cofc.edu}{iveyt@cofc.edu}}

\Address{$^{\rm b)}$~Department of Mathematical Sciences, Politecnico di Torino, Italy}
\EmailD{\href{mailto:emilio.musso@polito.it}{emilio.musso@polito.it}}

\ArticleDates{Received September 26, 2023, in final form March 13, 2024; Published online April 02, 2024}

\Abstract{We investigate geometric evolution equations for Legendrian curves in the 3-sphere which are invariant under the action of the unitary group~${\rm U}(2)$. We define a natural symplectic structure on the space of Legendrian loops and show that the modified Korteweg--de~Vries equation, along with its associated hierarchy, are realized as curvature evolutions induced by a sequence of Hamiltonian flows. For the flow among these that induces the mKdV equation, we investigate the geometry of solutions which evolve by rigid motions in ${\rm U}(2)$. Generalizations of our results to higher-order evolutions and curves in similar geometries are also discussed.}

\Keywords{mKdV; Legendrian curves; geometric flows; pseudohermitian CR geometry}

\Classification{37K10; 37K25; 53E99; 57K33}

\begin{flushright}
\begin{minipage}{70mm}
\it This article is dedicated to Peter J.~Olver\\ in honor of his 70th birthday
\end{minipage}
\end{flushright}

\renewcommand{\thefootnote}{\arabic{footnote}}
\setcounter{footnote}{0}

\section{Introduction}
Broadly speaking, the study of integrable geometric evolution equations for curves has led to fruitful discoveries of the interactions between the structures associated to integrability (e.g., Lax pairs, conservation laws, and B\"acklund transformations) and the geometric and topological features of solution curves. While several authors (including some of us) have extensively studied integrable realizations of the sine-Gordon and nonlinear Schr\"odinger equations in Euclidean geometry \cite{CI99,CI01,LP91}, additional integrable equations arise through investigations of flows in less familiar geometries. For example, the KdV equation appears in many geometric contexts, including flows for curves in the centroaffine plane \cite{CIM1,Pi} and null curves in Minkowski 3-space \cite{MN}, and investigations of curve evolutions in the projective plane and higher-dimensional centroaffine spaces have uncovered geometric flows that realize the Boussinesq and Kaup--Kuperschmidt hierarchies~\cite{CIM2,M1}.

In this article, we study flows for curves in 3-dimensional pseudohermitian Cauchy--Riemann (CR) geometry, specializing to the homogeneous geometry of the 3-sphere. Recall that a CR structure of hypersurface type (sometimes referred to as a pseudoconformal structure) on a~mani\-fold of real dimension $2n+1$ consists of a contact structure together with a compatible almost-complex structure on the $2n$-dimensional contact planes.
(Additional non-degeneracy conditions are usually assumed, but these are vacuous
in the case $n=1$ on which we will focus.)

The 3-sphere inherits its standard CR structure via its embedding as the hypersurface in~$\C^2$ comprised of the set of unit length
of vectors with respect to the Hermitian inner product. The automorphism group of this structure is strictly larger than ${\rm U}(2)$; in fact, the pseudoconformal 3-sphere is preserved by the 8-dimensional group ${\rm SU}(2,1)$ acting on $\C^3$ by preserving a~Hermitian inner product of split signature. In that setting, the 3-sphere is identified with the projectivization of the cone of non-zero null vectors in $\C^3$ and the standard contact structure in~$S^3$ can be expressed in terms of this inner product.

In previous work \cite{CI21,M,MNS, MS}, we investigated
geometric invariants and geometric evolution equations for Legendrian curves as well as curves transverse to the contact distribution in the pseudoconformal 3-sphere.
The setting of this article is {\em pseudohermitian} CR geometry, a sub-geometry of CR geometry in which a contact form is specified, resulting in a compatible
hermitian metric on the contact planes. The canonical connection and curvature for such structures were introduced by Webster \cite{W}. In dimension three, the simply-connected homogeneous pseudo-Hermitian CR manifolds with constant Webster curvature consist of the 3-sphere
$S^3 = \rU(2)/\rU(1)$, the universal cover of anti-de Sitter space $A^3 = \rU(1,1)/\rU(1)$, and the Heisenberg group $H^3$. In each case, the group preserves a fibration to a 2-dimensional space form, where the fibers are transverse to the contact planes.
For the case of the 3-sphere, this is the Clifford map
$\pi_C\colon   S^3 \to S^2$,
a geometrical version of the Hopf fibration (see Section~\ref{geometrysection} for details).

In this article, we will focus on closed Legendrian curves in pseudo-Hermitian $S^3$, in particular their discrete invariants, how their geometry is related
to that of their images
under $\pi_C$, and integrable geometric evolution equations that arise naturally for such curves.
Among other results, we will show that there exists an infinite sequence of geometric evolution equations
\begin{equation}\label{Znflow}
\dfrac{\di \gam}{\di t} = \VZ_n[\gam],\qquad n\ge 1,
\end{equation}
which induce the $n$th flow of the mKdV hierarchy for the curvature of the Legendrian curve $\gam$.

These evolution equations are related to geometric realizations of the mKdV hierarchy that
have already appeared in the literature.
Indeed, applying the Clifford projection to a Legendrian curve in $S^3$ produces
a curve in $S^2$ with the same curvature (up to a factor of 1/2), and curves evolving
by \eqref{Znflow} project to curves evolving by flows previously
identified by Goldstein and Petrich \cite{GP1,GP2} as inducing the mKdV hierarchy (see also the works by Doliwa and Santini \cite{DS} and by Langer and Perline \cite{LP}).
However, the flows we define in this work are geometrically distinctive for two reasons.
First, each flow $\VZ_n$ is Hamiltonian with respect to a symplectic structure on the space of periodic Legendrian curves, which is not induced by any corresponding
structure on $S^2$. Moreover, while curves in $S^2$ can be lifted to Legendrian curves in $S^3$, the Clifford projection does not induce a surjective map from
the space of closed Legendrian curves to closed curves in $S^2$. As we shall see,
a closed curve in $S^2$ must satisfy a rationality condition on its total curvature in order to have a lift into $S^3$ as a closed Legendrian curve, where the resulting rational number is related to the discrete invariants of the lift, as a closed Legendrian curve.\looseness=-1

We now summarize the contents of the paper:
\begin{itemize}\itemsep=0pt
\item[--]
In Section~\ref{sec2}, we review the geometry of Legendrian curves in $S^3$ -- in particular, the moving frame and curvature, and their relation to the projection under $\pi_C$. We define the Legendrian lift of a regular curve
in $S^2$, and compute the lifts of constant curvature circles as an example. We also review the discrete invariants of closed Legendrian curves, and compute such invariants for the circle lifts.
\item[--]
In Section~\ref{sec3}, we define a symplectic structure on the space of periodic Legendrian curves in~$S^3$ (modulo symmetries), and show that Hamiltonian vector field for total length induces mKdV evolution for curvature. More generally, we show that the entire mKdV hierarchy
can be induced by geometric flows for Legendrian curves.
\item[--]
In Section~\ref{sec4}, we compute the stationary curves for the mKdV vector field $Z_1$,
determining which of these are closed and, possibly, periodic in time. For a selection of representative examples, we use the Heisenberg projection, a contactomorphism of the sphere punctured at a point with $\R^3$, to obtain planar projections
that enable us to compute discrete invariants,
such as the Maslov index and Bennequin number (see~\cite{ET1} and the literature therein for an introduction to Legendrian knots and their contact invariants.)
\item[--]
In Section~\ref{sec5}, we discuss some open questions and directions for future research.
\end{itemize}

\section[Legendrian curves in pseudohermitian S\^{}3]{Legendrian curves in pseudohermitian $\boldsymbol{S^3}$}\label{sec2}
\subsection{Moving frames and curvature}\label{geometrysection}

Let $\langle -,-\rangle$ be the standard Hermitian inner product on $\C^2$, and $S^3 \subset \C^2$ denote the set of unit vectors.
 We will think of a regular parametrized curve $\gam\colon \R \to S^3$
as a $\C^2$-valued function of parameter $x$.
Differentiating $\langle \gam, \gam\rangle=1$ shows that $\langle \gam_x, \gam\rangle$ is pure imaginary. The curve $\gam$ is {\it Legendrian} if it satisfies the condition
\begin{equation}\label{Lcond}
\langle \gam_x, \gam \rangle=0.
\end{equation}
We will say that $\gam$ is a unit-speed curve if $\langle \gam_x, \gam_x \rangle =1$ identically, and we will use $s$ in place of~$x$ whenever we are assuming a unit-speed parametrization. We let $\Leg$ denote the set of regular parametrized Legendrian curves in $S^3$, and $\Legu$ denote the subset of those with unit speed.
The group $\rU(2)$ of unitary matrices acts transitively on $S^3$,
preserving the Hermitian inner product,
and thus inducing actions on $\Leg$ and $\Legu$.

Differentiating $\langle \gam, \gam \rangle=1$ and using both the Legendrian condition \eqref{Lcond} and the unit-speed condition
$\langle \gam_s, \gam_s \rangle=1$, shows that the matrix
\[
\Gamma = \begin{pmatrix} \gam \ \gam_s \end{pmatrix}
\]
takes value in $\rU(2)$. We will refer to this as the ${\rm U}(2)$-valued {\it moving frame of $\gam$}.
\begin{Lemma}
The moving frame $\Gamma$ satisfies the Frenet-type equation
\begin{equation}\label{FrenetEquation}
\Gamma_s = \Gamma U, \qquad U = \begin{pmatrix} 0 & -1 \\ 1 & \ri \kk\end{pmatrix},
\end{equation}
where we designate $\kk(s)$ as the {\it curvature function} of $\gam$.
\end{Lemma}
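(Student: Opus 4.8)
The plan is to differentiate the columns of $\Gamma = \begin{pmatrix} \gam & \gam_s \end{pmatrix}$ and re-expand the results in the orthonormal basis $\{\gam, \gam_s\}$ of $\C^2$ (orthonormal with respect to $\langle -,- \rangle$, by the Legendrian and unit-speed conditions). Writing $\Gamma_s = \Gamma U$ is equivalent to asserting that the first column of $\Gamma_s$ is $\gam_s$ — which is immediate — and that the second column $\gam_{ss}$ equals $-\gam + (\ri\kk)\gam_s$ for some real-valued function $\kk(s)$; the content of the lemma is precisely that the $\gam_s$-coefficient of $\gam_{ss}$ is purely imaginary and the $\gam$-coefficient is exactly $-1$.

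First I would compute the component of $\gam_{ss}$ along $\gam$, namely $\langle \gam_{ss}, \gam\rangle$. Differentiating the Legendrian condition $\langle \gam_s, \gam\rangle = 0$ gives $\langle \gam_{ss}, \gam\rangle + \langle \gam_s, \gam_s\rangle = 0$, and since $\langle \gam_s,\gam_s\rangle = 1$ by the unit-speed assumption, this coefficient is $-1$, as claimed. Second I would compute the component along $\gam_s$, namely $\langle \gam_{ss}, \gam_s\rangle$. Differentiating $\langle \gam_s, \gam_s\rangle = 1$ shows that $\langle \gam_{ss}, \gam_s\rangle + \langle \gam_s, \gam_{ss}\rangle = 0$, i.e.\ $\langle \gam_{ss}, \gam_s\rangle$ is purely imaginary; hence we may write $\langle \gam_{ss}, \gam_s\rangle = \ri\,\kk(s)$ with $\kk$ real-valued, and this \emph{defines} the curvature function. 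Assembling these two components, $\gam_{ss} = -\gam + \ri\kk\,\gam_s$, which is exactly the second column of $\Gamma U$ with the stated $U$.

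One routine point worth flagging: to write $\gam_{ss}$ as a linear combination of $\gam$ and $\gam_s$ with coefficients given by the respective inner products, I am using that $\{\gam(s),\gam_s(s)\}$ is a unitary basis of $\C^2$ at each $s$ (which is just the statement that $\Gamma(s)\in\rU(2)$, already established before the lemma). There is no real obstacle here — the only thing to be careful about is the convention for the Hermitian inner product (in which slot it is conjugate-linear), since that determines whether the off-diagonal reality computations come out as stated; with the standard convention all signs work out, and the resulting $U$ lies in $\fsu(2)$ up to the curvature term, consistent with $\Gamma$ taking values in $\rU(2)$. Thus no step is genuinely hard; the proof is a two-line differentiation of the two defining identities $\langle\gam,\gam\rangle=1$ and $\langle\gam_s,\gam_s\rangle=1$ together with the Legendrian condition.
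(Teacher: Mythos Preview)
Your proposal is correct and follows essentially the same approach as the paper: expand $\gam_{ss}$ in the unitary frame $\{\gam,\gam_s\}$, obtain the $-1$ coefficient by differentiating the Legendrian condition $\langle\gam_s,\gam\rangle=0$, and obtain the pure imaginarity of $\langle\gam_{ss},\gam_s\rangle$ by differentiating the unit-speed condition. The paper's proof is just a terser version of exactly these two differentiations.
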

\begin{proof}
Expanding $\gam_{ss}$ in terms of the moving frame gives
\begin{equation}\label{gamss}
\gam_{ss} = -\gam + \ri \kk \gam_s,
\end{equation}
where the first coefficient is determined by differentiating \eqref{Lcond}, and differentiating the unit-speed condition $\langle \gam_s, \gam_s \rangle=1$
implies the second coefficient is purely imaginary. Its imaginary part is computed by $\kk= \Im \langle \gam_{ss}, \gam_s \rangle$.
\end{proof}

For a more general parametrization, we have the following.
\begin{Lemma}
If $\gam$ is a regular Legendrian curve parametrized by an arbitrary variable $x$, its curvature is given by
\begin{equation}\label{generalkform}
 \kk = \dfrac{ \Im \langle \gam_{xx}, \gam_x \rangle}{\quad \langle \gam_x, \gam_x\rangle^{3/2}}.
\end{equation}
Moreover, if $\speed:=\langle \gam_x, \gam_x\rangle^{1/2}$ denotes the {\it speed} of $\gam$,
then the analogue of \eqref{gamss} is
\begin{equation}\label{gamxx}
\gam_{xx} = -\speed^2 \gam + \left( \dfrac{\speed_x}{\speed} + \ri \speed \kk\right) \gam_x.
\end{equation}
\end{Lemma}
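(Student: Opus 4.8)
The plan is to reduce everything to the unit-speed case already handled in \eqref{gamss} by a change of parameter, so that no genuinely new input is needed beyond the Frenet equation \eqref{FrenetEquation}. First I would introduce the arclength parameter $s$, determined by $\rd s/\rd x = \speed = \langle \gam_x,\gam_x\rangle^{1/2}$, so that $\gam_x = \speed\,\gam_s$. The Legendrian condition \eqref{Lcond} is manifestly invariant under reparametrization (it merely rescales by $\speed$), so the unit-speed formulas apply verbatim to $\gam$ viewed as a function of $s$; in particular $\gam_{ss} = -\gam + \ri\kk\,\gam_s$, where $\kk$ is the curvature of this unit-speed reparametrization.

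Differentiating $\gam_x = \speed\,\gam_s$ once more in $x$, using $\partial_x(\gam_s) = \speed\,\gam_{ss}$, gives $\gam_{xx} = \speed_x\,\gam_s + \speed^2\,\gam_{ss}$. Substituting \eqref{gamss} yields $\gam_{xx} = -\speed^2\gam + \big(\speed_x + \ri\speed^2\kk\big)\gam_s$, and replacing $\gam_s = \speed^{-1}\gam_x$ produces exactly \eqref{gamxx}. It remains only to check that this $\kk$ is recovered by \eqref{generalkform}, which is equivalent to showing that \eqref{generalkform} is well-defined independently of the parametrization. For this I would pair the intermediate expression $\gam_{xx} = -\speed^2\gam + \big(\speed_x + \ri\speed^2\kk\big)\gam_s$ against $\gam_x = \speed\,\gam_s$ in the Hermitian inner product: since $\speed$ is real, $\langle \gam,\gam_s\rangle = \overline{\langle \gam_s,\gam\rangle} = 0$ by \eqref{Lcond}, and $\langle \gam_s,\gam_s\rangle = 1$, so the pairing collapses to $\langle \gam_{xx},\gam_x\rangle = \speed\,\speed_x + \ri\,\speed^3\kk$. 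Taking imaginary parts gives $\Im\langle \gam_{xx},\gam_x\rangle = \speed^3\kk = \langle \gam_x,\gam_x\rangle^{3/2}\kk$, which is \eqref{generalkform}.

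There is essentially no obstacle here beyond bookkeeping. The only points requiring care are the reality of $\speed$ (so that it passes through the conjugate-linear slot of $\langle-,-\rangle$ unchanged) and the observation that the two halves of the lemma are logically linked: establishing \eqref{generalkform} is precisely the statement that the curvature $\kk$ of \eqref{FrenetEquation} is a reparametrization invariant, which is what legitimizes using the same symbol $\kk$ in \eqref{gamxx}.
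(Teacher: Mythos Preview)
Your proof is correct and takes essentially the same approach as the paper: both reduce to the unit-speed Frenet equation \eqref{gamss} via the chain rule $\rd s/\rd x=\speed$, express the $x$-derivatives in terms of $s$-derivatives (or vice versa), and read off the two formulas by pairing with $\gam_x$ and taking imaginary parts. The only cosmetic difference is order: the paper first derives \eqref{generalkform} by computing $\langle\gam_{ss},\gam_s\rangle$ in terms of $\langle\gam_{xx},\gam_x\rangle$ and then solves for $\gam_{xx}$, whereas you first obtain \eqref{gamxx} and then extract \eqref{generalkform} from it.
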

\begin{proof} Since any arclength parameter satisfies ${\rm d}s/{\rm d}x=\speed$, then
\[
\gam_s =\speed^{-1} \gam_x , \qquad \gam_{ss}=\speed^{-1}\bigl(\speed^{-1} \gam_x\bigr)_x = \speed^{-2} \gam_{xx}-\speed^{-3} \speed_x \gam_x.
\]
It follows that
\[
\langle \gam_{ss}, \gam_s \rangle=\speed^{-3} \langle \gam_{xx}, \gam_x \rangle -\speed^{-2}\speed_x .
\]
Since $\speed$ is real, we obtain \eqref{generalkform} by taking the imaginary part of each side of the last equation. Using \eqref{gamss}, we write
\[
\speed^{-2} \gam_{xx}-\speed^{-3} \speed_x \gam_x =-\gam +\ri \speed^{-1} k\gam_x,
\]
which, when solved for $\gam_{xx}$, gives \eqref{gamxx}.
\end{proof}

\begin{Definition}
If $\gam$ is periodic with minimal period $L$, we say that $\gam$ is a {\it closed Legendrian curve of period $L$}, and we let $|[\gam]| \subset S^3 $ denote the image or
{\it trace} of $\gam$.
 We let $\Legu_L$ denote the space of closed unit-speed Legendrian curves in $\rS^3$ with fixed length $L$. If the map $\gam\colon S^1_L\to S^3$ is injective, we say that $\gam$ is a (parametrized) {\it Legendrian knot}.
\end{Definition}

We remark that the elements of $\Legu_L$ can be viewed as isometric immersions of the circle $S^1_L$ of circumference $L$ into $S^3$. Also note that the curvature function of a closed curve of period $L$ must satisfy $\kk(x+L)=\kk(x)$, but $L$ is not necessarily the
minimal period of $\kk$.

\subsubsection*{Clifford projections and Legendrian lifts}

The well-known diffeomorphism between $S^3$ and the group ${\rm SU}(2)$ can be defined
by mapping a~unit vector $\bw=(w_1, w_2)^{\mathsf{T}} \in S^3$ to the matrix
\begin{equation}\label{isom}\widehat{\bw}
= \begin{pmatrix} w_1 & -\overline{w_2} \\ w_2 & \hphantom{-}\overline{w_1}\end{pmatrix}\in {\rm SU}(2).\end{equation}
We also identify the Lie algebra $\fsu(2)$ with $\R^3$ via the linear isomorphism
\[\sj\colon \ \begin{pmatrix} \ri x_1 & \ri x_2 + x_3 \\ \ri x_2 - x_3 & -\ri x_1 \end{pmatrix} \mapsto (x_1, x_2,x_3)^{\mathsf{T}}.\]
Combining these allows us to define the 2-to-1 spin-covering homomorphism $\sigma\colon S^3 \to {\rm SO}(3)$ as
\begin{equation}
\label{defofsigma}
\sigma(\bw):= \sj \circ \Ad_{\widehat\bw} \circ~\sj^{-1},
\end{equation}
where $\Ad$ denotes the adjoint representation of ${\rm SU}(2)$, and we take the standard inner product on $\R^3$.

We define the {\it Clifford map} $\pi_C\colon S^3 \to S^2$ in terms of $\sigma$ as follows. Let $\{ \ve_1, \ve_2, \ve_3\}$ be the standard basis on $\R^3$;
then
\[\pi_C(\bw) := \sigma(\bw) \ve_1.\]
Since $\sigma\bigl({\rm e}^{\ri \theta}\bw\bigr)$ differs from $\sigma(\bw)$ by rotation that fixes $\ve_1$,
this endows $S^3$ with the structure of a circle bundle whose fibers are the integral curves of the {\it characteristic vector field} $\bw\mapsto \ri\bw$. Thus, condition \eqref{Lcond} can be interpreted as saying that a curve in $\rS^3$ is Legendrian if and only if it is orthogonal to the fibers of the Clifford map.

\begin{Definition}
The {\it Clifford projection} of a Legendrian curve $\gam$ is the immersed curve ${\eta\colon\R\!\to\! S^2}$ defined by $\eta =\pi_C \circ \gam$. (Note that $\gam$ is regular if and only if $\eta$ is regular.)
\end{Definition}

\begin{Proposition}
Let $\gam$ be a Legendrian curve parametrized by arclength $s$, with curvature function $\kk(s)$. Then its Clifford projection has speed~$2$ and Frenet curvature $\kk/2$.
\end{Proposition}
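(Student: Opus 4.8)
The plan is to compute directly with the $\rU(2)$-valued moving frame $\Gamma = (\gam\ \gam_s)$ and the Frenet equation~\eqref{FrenetEquation}; geometrically the result just reflects that $\pi_C$ is, up to a constant rescaling of the target, the Hopf Riemannian submersion $S^3 \to S^2$, which stretches horizontal tangent vectors by a factor of~$2$ (the Legendrian curves being the horizontal ones). First I would express $\eta = \pi_C \circ \gam = \sj(\Ad_{\widehat\gam} E_1)$, where $E_i := \sj^{-1}(\ve_i)$ (so $E_1 = \ri\,{\rm diag}(1,-1)$), directly in terms of $\Gamma$ rather than of the $\SU(2)$-matrix $\widehat\gam$ of~\eqref{isom}. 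Writing $\widehat\bw = (\bw\ J\bw)$ with $J(w_1,w_2) := (-\overline{w_2},\overline{w_1})$, the Legendrian and unit-speed conditions say that $\gam_s$ (the second column of $\Gamma$) and $J\gam$ (the second column of $\widehat\gam$) are both unit vectors spanning the complex line $\gam^{\perp}$; since the first columns agree, this forces $\widehat\gam = \Gamma D$ for some diagonal unitary matrix $D$. As $D$ commutes with the diagonal matrix $E_1$ one has $\Ad_{\widehat\gam} E_1 = \Ad_{\Gamma} E_1$, so that
\[
\eta = \sj(\Ad_{\Gamma} E_1) = Q\,\ve_1, \qquad Q := \sj \circ \Ad_{\Gamma} \circ \sj^{-1},
\]
and $Q \in \SO(3)$ since $\Ad$ of a unitary matrix is an orientation-preserving isometry of $\fsu(2)$ equipped with the $\Ad$-invariant inner product that $\sj$ identifies with the standard one on $\R^3$ (the $E_i$ being an orthonormal basis).

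Next I would differentiate, using $\Gamma^{-1}\Gamma_s = U$: for a smooth curve $X(s)$ in $\fsu(2)$ one has $\big(\sj(\Ad_{\Gamma}X)\big)_s = \sj\big(\Ad_{\Gamma}([U,X] + X_s)\big)$. Since $E_1$ is constant and $U_s = \ri\kk_s\,{\rm diag}(0,1)$ commutes with $E_1$, this gives $\eta_s = \sj(\Ad_{\Gamma}[U,E_1])$ and $\eta_{ss} = \sj(\Ad_{\Gamma}[U,[U,E_1]])$. Two one-line matrix computations then yield $[U,E_1] = 2E_2$ and $[U,[U,E_1]] = 2[U,E_2]$, with $\sj$-images $(0,2,0)^{\mathsf T}$ and $(-4,0,2\kk)^{\mathsf T}$ respectively. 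Hence $|\eta_s| = \big|Q\,(0,2,0)^{\mathsf T}\big| = 2$, which gives the speed claim; and since $Q \in \SO(3)$ preserves the scalar triple product,
\[
\det(\eta,\eta_s,\eta_{ss}) = \det\big(\ve_1,\ (0,2,0)^{\mathsf T},\ (-4,0,2\kk)^{\mathsf T}\big) = 4\kk.
\]
The Frenet curvature of a curve on the unit $2$-sphere being $\det(\eta,\eta_s,\eta_{ss})/|\eta_s|^3$, I conclude it equals $4\kk/2^3 = \kk/2$.

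The only step that is not automatic is the first one — matching the $\rU(2)$-valued moving frame with the $\SU(2)$-based definition of $\pi_C$, and noticing that the discrepancy $D$ drops out precisely because it acts as a rotation about the characteristic direction, which is exactly the fibre direction of $\pi_C$. Everything after that is two commutators in $\fsu(2)$ together with the fact that $\Ad$ of a unitary matrix is an oriented isometry of $\fsu(2) \cong \R^3$. (A more pedestrian route would be to set $\gam = (a,b)$, write out $\eta = \pi_C(\gam) = \big(|a|^2-|b|^2,\ 2\Re(a\bar b),\ -2\Im(a\bar b)\big)$ explicitly, and differentiate twice using~\eqref{gamss} and the Legendrian and unit-speed relations, but that is messier.)
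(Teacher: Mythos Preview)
Your argument is correct and takes a genuinely different route from the paper's.

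The paper writes out $\sigma(\bw)$ explicitly in coordinates (formula~\eqref{sigmawform}), differentiates to see that $\sigma(\bw)$ gives a \emph{parallel} orthonormal frame along $\eta$, and then rotates it by an angle $\alpha$ satisfying ${\rm e}^{\ri\alpha}=w_1w_2'-w_1'w_2$ to obtain the Frenet frame; finally it uses the second-order equation~\eqref{gamss} componentwise to identify $\alpha_s=\kk$. Your approach replaces all of this with a Lie-algebraic computation: you observe that the discrepancy between $\widehat\gam$ and the moving frame $\Gamma$ is diagonal and hence commutes with $E_1$, so $\eta=\sj(\Ad_\Gamma E_1)$; then derivatives of $\eta$ become successive brackets with $U$, and the curvature drops out of a single triple-product determinant. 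Your route avoids the explicit matrix~\eqref{sigmawform} entirely and never needs to construct the Frenet frame of $\eta$, which makes it shorter and more conceptual. The paper's approach, on the other hand, yields the Frenet frame $F=\sigma\bigl({\rm e}^{-\ri\alpha/2}\bw\bigr)$ explicitly, which is exactly what is reused in Remark~\ref{LiftingRemark} to define the Legendrian lift; your argument gives the statement but not this by-product.
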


\begin{proof} Let $\bw=(w_1, w_2)^{\mathsf{T}}$ be the vector of components of $\gam$. By applying \eqref{defofsigma} to the standard basis, we can write $\sigma(\bw)$ in matrix form as \begin{equation}\label{sigmawform}\renewcommand{\arraystretch}{1.2}
\sigma(\bw) = \begin{pmatrix} |w_1|^2 - |w_2|^2 & -w_1 w_2 & -\overline{w_1} \overline{w_2} \\
w_1\overline{w_2} + \overline{w_1} w_2 & \tfrac12\bigl(w_1^2-w_2^2\bigr) & \tfrac12\bigl(\overline{w_1}^2-\overline{w_2}^2\bigr) \\
\ri (w_1\overline{w_2} - \overline{w_1} w_2) & \tfrac12\ri \bigl(w_1^2+w_2^2\bigr) & -\tfrac12 \ri \bigl(\overline{w_1}^2 + \overline{w_2}^2\bigr)
\end{pmatrix} \begin{pmatrix} 1 & 0 & \hphantom{-}0 \\ 0 & 1 & -\ri \\ 0 & 1 & \hphantom{-}\ri \end{pmatrix},
\end{equation}
and compute
\[
\sigma(\bw)_s = \sigma(\bw) \begin{pmatrix} 0 & -2\cos \alpha & -2\sin\alpha \\ 2\cos\alpha & 0 & 0\\ 2\sin\alpha & 0 & 0\end{pmatrix},
\]
where $\cos \alpha + \ri \sin \alpha = w_1 w_2' - w_1' w_2$. (The fact that this has unit modulus follows from the Legendrian condition $w_1\overline{w_1}'+w_2 \overline{w_2}'=0$ and the unit-speed condition $w_1'\overline{w_1}'+w_2' \overline{w_2}'=1$.)
The last equation shows that $\sigma(\mathbf w)$ gives a parallel orthonormal along the Clifford projection.

In order to construct a Frenet frame for the Clifford projection, we modify $\sigma(\bw)$. From \eqref{sigmawform}, one can check that
\begin{equation*}
\sigma\bigl({\rm e}^{\ri \phi} \bw\bigr) = \sigma(\bw) \begin{pmatrix} 1 & 0 & 0 \\ 0 & \cos(2\phi) & \sin(2\phi) \\ 0 & -\sin(2\phi) & \cos(2\phi) \end{pmatrix}.
\end{equation*}
(Note that the first column of $\sigma(\bw)$, which gives the value of the Clifford map, is unchanged.)
In particular, setting $\phi = -\alpha/2$ and differentiating this formula shows that $F= \sigma\bigl({\rm e}^{-\ri \alpha/2} \bw\bigr)$ satisfies
\[F_s = F \begin{pmatrix} 0 & -2 & 0 \\ 2 & \hphantom{-}0 & -\alpha_s \\ 0 & \hphantom{-}\alpha_s & 0 \end{pmatrix}.\]

On the other hand, the component-wise form of \eqref{gamss} gives
\[
w_1''=-w_1+\ri\kk w_1', \qquad w_2''=-w_2+\ri\kk w_2',
\]
from which we compute
\[\ri \alpha_s = \frac{(w_1 w_2' - w_1' w_2)'}{ w_1 w_2' - w_1' w_2}=\frac{w_1 w_2'' - w_1'' w_2}{ w_1 w_2' - w_1' w_2} = \ri \kk.\]
This gives $\alpha_s=\kk$ and the result follows.
\end{proof}

\begin{Remark}\label{LiftingRemark}
Conversely, suppose $\eta\colon\R \to S^2$ is a curve parametrized by $x$ with constant speed~2 and curvature function $\kk/2$. Identify ${\rm SO}(3)$ with the oriented orthonormal frame bundle of $S^2$, with the basepoint map ${\rm SO}(3)\to S^2$ is given by the first column. Then the Frenet frame $F=(\eta, T, N)$ is a lift of $\eta$ into ${\rm SO}(3)$ satisfying
\[F_x = F \begin{pmatrix} 0 & -2 & \hphantom{-}0 \\ 2 & \hphantom{-}0 & -\kk \\ 0 & \hphantom{-}\kk & \hphantom{-}0 \end{pmatrix}.\]
In turn, let $\gamt\colon\R \to S^3$ be a lift of $F$ relative to the double cover $\sigma$ \big(i.e., such that $F = \sigma\circ \gamt$\big), which is unique
up to a minus sign. Then, if $\alpha(x)$ is an antiderivative of $\kk(x)$,
$\gam(x) = {\rm e}^{\ri \alpha/2} \gamt(x)$ is a unit-speed Legendrian lift of $\eta$, unique up to multiplication by a unit modulus constant.
\end{Remark}

\subsection[Closed Legendrian curves in S\^{}3 and their discrete invariants]{Closed Legendrian curves in $\boldsymbol{S^3}$ and their discrete invariants}

The classical invariants of closed Legendrian curves and Legendrian knots are the Maslov index (or rotation number) and the Bennequin invariant (see, e.g., \cite{ET1}).
We first discuss how these are computed, before passing to less familiar discrete invariants.

For a given unit vector $\bw = (w_1, w_2)^{\mathsf{T}} \in S^3$, let $\bw^* = (-\overline{w_2}, \overline{w_1})^{\mathsf{T}}$ (i.e., the second column in \eqref{isom}).
 Then vector fields $\bw \mapsto \bw^*$ and $\bw \mapsto \ri \bw^*$ give an orthogonal parallelization of the contact distribution on $S^3$.
Hence, for any $\gam \in \Legu_L$ there is a unique smooth function $\theta\colon\R/(L\Z)\to \R/(2\pi\Z)$ such that
\begin{equation}\label{Masloveq}
\gam_s={\rm e}^{\ri\theta}\gam^*,
\end{equation}
and the Maslov index ${\boldsymbol{\mu}}_{\gamma}$ is the degree of $\theta$.

If $|[\gam]|$ is a Legendrian knot ${\mathcal K}$, we define the Bennequin invariant ${\bf tb}_{\gamma}$ by the following construction.
For $\varepsilon\in \R$, let ${\mathcal K}_{\varepsilon}={\rm e}^{2\pi \ri\varepsilon}{\mathcal K}$. Then there exists
a connected open interval $I \subset (0,1)$ such that ${\mathcal K}\cap {\mathcal K}_{\varepsilon}=\varnothing$ for every $\varepsilon\in I$, and the Bennequin invariant ${\bf tb}_{\gamma}$ equals the linking number ${\rm Lk}({\mathcal K}, {\mathcal K}_{\varepsilon})$.

\begin{Definition}
Let $\eta$ be the Clifford projection of $\gam\in \Legu_L$ and let $L_{\eta}$ be its length. Since the Clifford map doubles the speed of the curve, there is a positive integer $\cl_{\gamma}$ called the {\it Clifford index}
such that $2L=\cl_{\gamma}L_{\eta}$.

We say that $\gam$ has {\it spin $1$} if $\sigma\circ \gam$ has trivial homotopy class in $\pi_1({\rm SO}(3)) =\Z/2$, or has {\it spin~$1/2$} if the homotopy class is non-trivial. Similarly, we say a closed curve in $S^2$ has spin~1 if its Frenet frame $F$ has trivial homotopy class,
and has spin~1/2 if the Frenet frame has non-trivial homotopy class.
\end{Definition}

\begin{Proposition}\label{Maslov} Let $\gam\in \Legu_L$. Then
\[{\boldsymbol \mu}_{\gamma}=\frac{1}{2\pi}\int_0^L \kk(s)\,{\rm d}s.\]
Hence, the total curvature\footnote{For the sake of convenience, for us the `total curvature' will always mean the integral of the curvature function with respect to arclength, {\em divided by} $2\pi$.} of a closed Legendrian curve is an integer. In addition, this implies that the total curvature of the Clifford projection is the rational number ${\boldsymbol \mu}_{\gamma}/\cl_{\gamma}$.
\end{Proposition}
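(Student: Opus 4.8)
The plan is to identify the angle function $\theta$ appearing in \eqref{Masloveq} with an antiderivative of the curvature. Concretely, I will show that $\theta_s=\kk$, so that the degree of $\theta$ -- which is by definition ${\boldsymbol\mu}_{\gamma}$ -- equals $\frac{1}{2\pi}\int_0^L\kk\,{\rm d}s$; the integrality statement and the formula for the Clifford projection then follow by bookkeeping.

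First I would record three elementary identities for the conjugate-linear operation $\bw\mapsto\bw^*=(-\overline{w_2},\overline{w_1})^{\mathsf{T}}$ used above to parallelize the contact distribution: for $\lambda\in\C$ one has $(\lambda\bw)^*=\overline\lambda\,\bw^*$, the operation commutes with $\di/\di s$, i.e.\ $(\bw_s)^*=(\bw^*)_s$, and $(\bw^*)^*=-\bw$. Differentiating \eqref{Masloveq} and applying these gives
\[
\gam_{ss}
= \ri\theta_s\,{\rm e}^{\ri\theta}\gam^* + {\rm e}^{\ri\theta}(\gam^*)_s
= \ri\theta_s\,\gam_s + {\rm e}^{\ri\theta}(\gam_s)^*
= \ri\theta_s\,\gam_s + {\rm e}^{\ri\theta}{\rm e}^{-\ri\theta}(\gam^*)^*
= \ri\theta_s\,\gam_s - \gam .
\]
Comparing with the Frenet equation \eqref{gamss}, namely $\gam_{ss}=-\gam+\ri\kk\gam_s$, and using that $\{\gam,\gam_s\}$ is a $\C$-basis of $\C^2$ at each point, forces $\theta_s=\kk$. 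Consequently ${\boldsymbol\mu}_{\gamma}$, being the degree of $\theta\colon\R/(L\Z)\to\R/(2\pi\Z)$, equals $\frac{1}{2\pi}\int_0^L\theta_s\,{\rm d}s=\frac{1}{2\pi}\int_0^L\kk\,{\rm d}s$; since a degree is an integer, this is the claimed integrality of the total curvature.

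For the last assertion I would parametrize the Clifford projection $\eta$ by its own arclength $\sigma$. By the earlier proposition $\eta$ has speed $2$ with respect to $s$, so ${\rm d}\sigma=2\,{\rm d}s$, and its Frenet curvature equals $\kk/2$. As $s$ ranges over one period $[0,L]$ of $\gam$, $\sigma$ ranges over $[0,2L]$; since $2L=\cl_{\gamma}L_{\eta}$ by the definition of the Clifford index, this is exactly $\cl_{\gamma}$ full periods of $\eta$. Hence the total curvature of $\eta$ over a single period is
\[
\frac{1}{\cl_{\gamma}}\cdot\frac{1}{2\pi}\int_0^{2L}\frac{\kk}{2}\,{\rm d}\sigma
= \frac{1}{\cl_{\gamma}}\cdot\frac{1}{2\pi}\int_0^{L}\kk\,{\rm d}s
= \frac{{\boldsymbol\mu}_{\gamma}}{\cl_{\gamma}} .
\]

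I do not anticipate a serious obstacle. The one place demanding care is the bookkeeping of signs and conjugations in the three identities for $\bw\mapsto\bw^*$, since a sign slip there would corrupt the identity $\theta_s=\kk$; and one should keep in mind that $L$ need not be the minimal period of $\kk$, nor $2L$ the minimal length of $|[\eta]|$, but because every quantity in sight is an integral these discrepancies are harmless and are precisely absorbed by the factor $\cl_{\gamma}$.
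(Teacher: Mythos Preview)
Your proof is correct and follows essentially the same route as the paper's: differentiate \eqref{Masloveq}, use the conjugate-linear identities for $\bw\mapsto\bw^*$ to obtain $\gam_{ss}=\ri\theta_s\gam_s-\gam$, and compare with \eqref{gamss} to conclude $\theta_s=\kk$; the Clifford-projection computation is likewise identical in spirit. You merely make explicit the three identities $(\lambda\bw)^*=\overline\lambda\,\bw^*$, $(\bw_s)^*=(\bw^*)_s$, $(\bw^*)^*=-\bw$ that the paper uses tacitly.
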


\begin{proof} Differentiating \eqref{Masloveq}, we get
\[\gam_{ss}=\ri\theta_s \gam_s+{\rm e}^{\ri\theta}(\gam^*)_s=\ri\theta_s \gam_s+{\rm e}^{\ri\theta}(\gam_s)^*=\ri\theta_s \gam_s-\gam.\]
Comparing with \eqref{gamss} shows that $\theta_s=\kk$. Hence, $\operatorname{deg} (\theta)=\frac{1}{2\pi}\int_0^L \kk(s) \, {\rm d}s$.
On the other hand, if we use $\sdown$ to denote the arclength parameter along the Clifford projection, then $\sdown=2s$ and $\kappa(\sdown) = \tfrac12 \kk(2s)$, so
\[\int_0^L \kk(s)\,{\rm d}s = \int_0^{\cl_\gamma L_\eta} \kappa(\sdown)\,{\rm d}\sdown = \cl_\gamma \int_0^{L_\eta} \kappa(\sdown)\,{\rm d}\sdown,\]
and the second assertion follows.
\end{proof}

\begin{Remark}
Conversely, let $\eta$ be a closed curve in $S^2$ of length $2T$, with total curvature equal to $a/b$, where $a\in \Z$ and $b\in \Z^+$ have no common divisors. Let $\gam$ be a unit-speed Legendrian lift of $\eta$ constructed as described in Remark \ref{LiftingRemark}. Then
\[\gam(s + b T) = \left\{ \begin{aligned} &(-1)^a \gam(s) & &\text{if $\eta$ has spin $1$,} \\
							&(-1)^{a+b} \gam(s) & &\text{if $\eta$ has spin $1/2$.}
\end{aligned}
\right.
\]
Hence, in the case of spin 1, the Clifford index is $\cl_\gamma = b$ if $a$ is even and $\cl_\gamma=2b$ if $a$ is odd.
In the case of spin $1/2$, $\cl_\gamma=b$ if $a+b$ is even and $\cl_\gamma=2b$ if $a+b$ is odd.
\end{Remark}

\begin{Definition} The CR-analogue of stereographic projection is the {\it Heisenberg projection} $\bp_{H}$, a {\em contactomorphism} from the punctured
sphere to $\R^3$ with the contact form ${\rm d}z-y{\rm d}x+x{\rm d}y$.
Taking ${\rm S}=(-1,0)\in \C^2$ as the omitted point, then for $\bigl(z^1,z^2\bigr)\in S^3\setminus \{{\rm S}\}$,
\begin{equation}\label{heisenbergprojection}
\bph\colon \ \bigl(z^1,z^2\bigr)\mapsto \biggl(\mathrm{Re \biggl(\frac{\ri\sqrt{2}z_2}{1+z_1}\biggr),
\mathrm{Im}\biggl(\frac{\ri\sqrt{2}z_2}{1+z_1}\biggr)},\mathrm{Re}\biggl(\frac{\ri(1-z_1)}{1+z_1}\biggr)\biggr)^{\mathsf{T}}.
\end{equation}
\end{Definition}

\begin{Remark}
The Maslov index and the Bennequin invariant can be computed directly from the Heisenberg projection $\gam_H={\bf p}_H\circ \gam$, assuming that ${\rm S}\notin |[\gam]|$.
The Maslov index is the turning number of the {\it Lagrangian projection} of $\gam_H$, defined by $\alpha = \pi_z\circ \gam_H$, where $\pi_z$ denotes the orthogonal projection from $\R^3$ onto the $xy$-coordinate plane. If $\gam$ is a knot, the Bennequin number is the writhe of $\alpha$ with respect to upward oriented $z$-axis (see, e.g., \cite{ET1}).
\end{Remark}

\subsection{Constant-curvature examples}
To illustrate the Legendrian lift, we consider lifting circles obtained
by intersecting the unit sphere $S^2 \subset \R^3$ with the plane $x=h$.
Let $\rC(h) = \bigl\{ (x,y,z) \in S^2 \mid x=h\bigr\}$ for $h \in (-1,1)$. Since~$\rC(h)$ has
radius $\ell = \sqrt{1-h^2}$ and constant curvature $h/\ell$ as a curve on $S^2$,
the total curvature of $\rC(h)$ is equal to $h$. (Note that the curvature may be negative due to choice of orientation.) Thus, the only circles with closed Legendrian lifts are those for which $h$ is rational.

\begin{Proposition}\label{constantcurvature} For a pair of relatively prime positive integers $m$, $n$, let $\rC_{m,n}$ denote be the circle $\rC(h)$ for $h = (m-n)/(m+n)$.
Then $\rC_{m,n}$ has spin $1/2$, and its Legendrian lifts are left-handed torus knots of type $(-m,n)$. Moreover, these lifts have constant curvature $\kk_{m,n}=(m-n)/\sqrt{mn}$, total curvature equal to $m-n$, and Clifford index $m+n$.
\end{Proposition}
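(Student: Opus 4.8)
The plan is to produce an explicit $\rU(2)$-model for the lift and then read off each asserted quantity. First, by the Proposition relating a Legendrian curve to its Clifford projection, a unit-speed Legendrian curve of constant curvature $\kk$ projects to a circle in $S^2$ of geodesic curvature $\kk/2$; since circles in $S^2$ are determined up to congruence and orientation by their geodesic curvature, the Legendrian lift of $\rC_{m,n}$ is --- up to the $\rU(2)$-action and the unit-modulus rescaling of Remark~\ref{LiftingRemark} --- the constant-curvature Legendrian curve whose curvature satisfies $\kk/2 = h/\sqrt{1-h^2}$ for $h=(m-n)/(m+n)$. Using $1-h^2 = 4mn/(m+n)^2$, this already gives $\kk = \kk_{m,n}=(m-n)/\sqrt{mn}$.

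Next I would integrate the Frenet equation \eqref{FrenetEquation} with this constant $\kk$. The eigenvalues of $U$ are $\ri\mu_\pm$ with $\mu_\pm = \tfrac12\bigl(\kk_{m,n}\pm\sqrt{\kk_{m,n}^2+4}\bigr)$, so that $\mu_+\mu_-=-1$, $\mu_++\mu_-=\kk_{m,n}$ and $\mu_\pm^2-\kk_{m,n}\mu_\pm-1=0$. From these relations one verifies directly that
\[
\gam(s)=\Bigl(\sqrt{\tfrac{n}{m+n}}\,{\rm e}^{\ri\mu_+s},\ \sqrt{\tfrac{m}{m+n}}\,{\rm e}^{\ri\mu_-s}\Bigr)
\]
is unit-speed and Legendrian with $\Im\langle\gam_{ss},\gam_s\rangle\equiv\kk_{m,n}$, hence --- by uniqueness of the moving frame for a prescribed curvature function --- is the Legendrian lift up to a $\rU(2)$ motion. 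Since $\kk_{m,n}^2+4=(m+n)^2/mn$ one gets $\mu_+=\sqrt{m/n}$, $\mu_-=-\sqrt{n/m}$, so after the reparametrization $s=\sqrt{mn}\,t$ the model reads
\[
\gam=\Bigl(\sqrt{\tfrac{n}{m+n}}\,{\rm e}^{\ri mt},\ \sqrt{\tfrac{m}{m+n}}\,{\rm e}^{-\ri nt}\Bigr),\qquad t\in[0,2\pi].
\]

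All remaining assertions follow from this closed form. The image lies on the torus $\{\,|z_1|^2=n/(m+n)\,\}\subset S^3$, winding $m$ times around one of the two circle factors and $-n$ times around the other; since $\gcd(m,n)=1$, it is an embedded simple closed curve, i.e.\ the torus knot of type $(m,-n)=(-m,n)$, and comparison with the standard orientation of $S^3\subset\C^2$ identifies it as the left-handed one. Because $\gcd(m,n)=1$, the conditions ${\rm e}^{\ri mt_0}={\rm e}^{-\ri nt_0}=1$ force $t_0\in 2\pi\Z$, so the minimal period is $L=2\pi\sqrt{mn}$ and the total curvature is $\tfrac1{2\pi}\kk_{m,n}L=m-n$, consistent with Proposition~\ref{Maslov}. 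The Clifford projection $\eta=\rC_{m,n}$ has trace-length $L_\eta=2\pi\sqrt{1-h^2}=4\pi\sqrt{mn}/(m+n)$, so $\cl_{\gamma}=2L/L_\eta=m+n$. Finally, $\rC_{m,n}$ has spin $1/2$: the Frenet frame of any circle $\rC(h)$ can be written $F(\theta)=R_1(\theta)F(0)$, where $\theta$ is the angular parameter and $R_1(\theta)$ is the rotation by $\theta$ about the $\ve_1$-axis (the axis of the circle), and over $\theta\in[0,2\pi]$ this represents the nontrivial loop in $\pi_1(\SO(3))$.

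The one genuinely delicate point is fixing the \emph{handedness}: reading the unoriented type $(\pm m,\mp n)$ off the winding numbers is routine, but matching ``left-handed'' to a definite sign requires committing to orientation conventions for $S^3$ and for the two generating circles of the torus and checking them against a reference case (e.g.\ $(m,n)=(2,3)$, which should give the left-handed trefoil). Everything else is bookkeeping with lengths together with the elementary arithmetic of the hypothesis $\gcd(m,n)=1$.
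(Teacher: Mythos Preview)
Your proof is correct and follows essentially the same strategy as the paper: produce an explicit closed-form Legendrian curve with the required constant curvature and read off the knot type, period, total curvature, and Clifford index. Your derivation of the model curve via the eigenvalues of $U$ is slightly more motivated than the paper's approach, which simply posits the formula $\gam_{m,n}(s)=\frac{1}{\sqrt{m+n}}\bigl(\sqrt{m}\,{\rm e}^{-\ri ns/\sqrt{mn}},\sqrt{n}\,{\rm e}^{\ri ms/\sqrt{mn}}\bigr)$ and verifies it; the two curves differ only by swapping the coordinates, an element of $\rU(2)$.

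The one substantive difference is the spin computation. The paper lifts the Frenet frame $F_{m,n}(\sdown)=\rA\exp(\sdown\,{\rm M}_{m,n})$ of $\rC_{m,n}$ through the spin cover $\sigma$, computes the eigenvalues $\pm\ri/(2\ell)$ of the lifted generator $\hat{\rm M}_{m,n}\in\fsu(2)$, and observes that the minimal period of the $\SU(2)$-lift is $4\pi\ell$, twice that of $F_{m,n}$. Your argument is more direct and more general: you note that the Frenet frame of \emph{any} circle $\rC(h)$ is $F(\theta)=R_1(\theta)F(0)$, a single full rotation about the axis $\ve_1$, which represents the nontrivial class in $\pi_1(\SO(3))$. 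This avoids the eigenvalue computation entirely and makes clear that the spin-$1/2$ conclusion has nothing to do with the rationality of $h$; the paper's argument, by contrast, yields the exact period of the $\SU(2)$-lift as a by-product, which is not needed here but could be useful elsewhere.
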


\begin{figure}[t]\centering
\includegraphics[height=5cm,width=5cm]{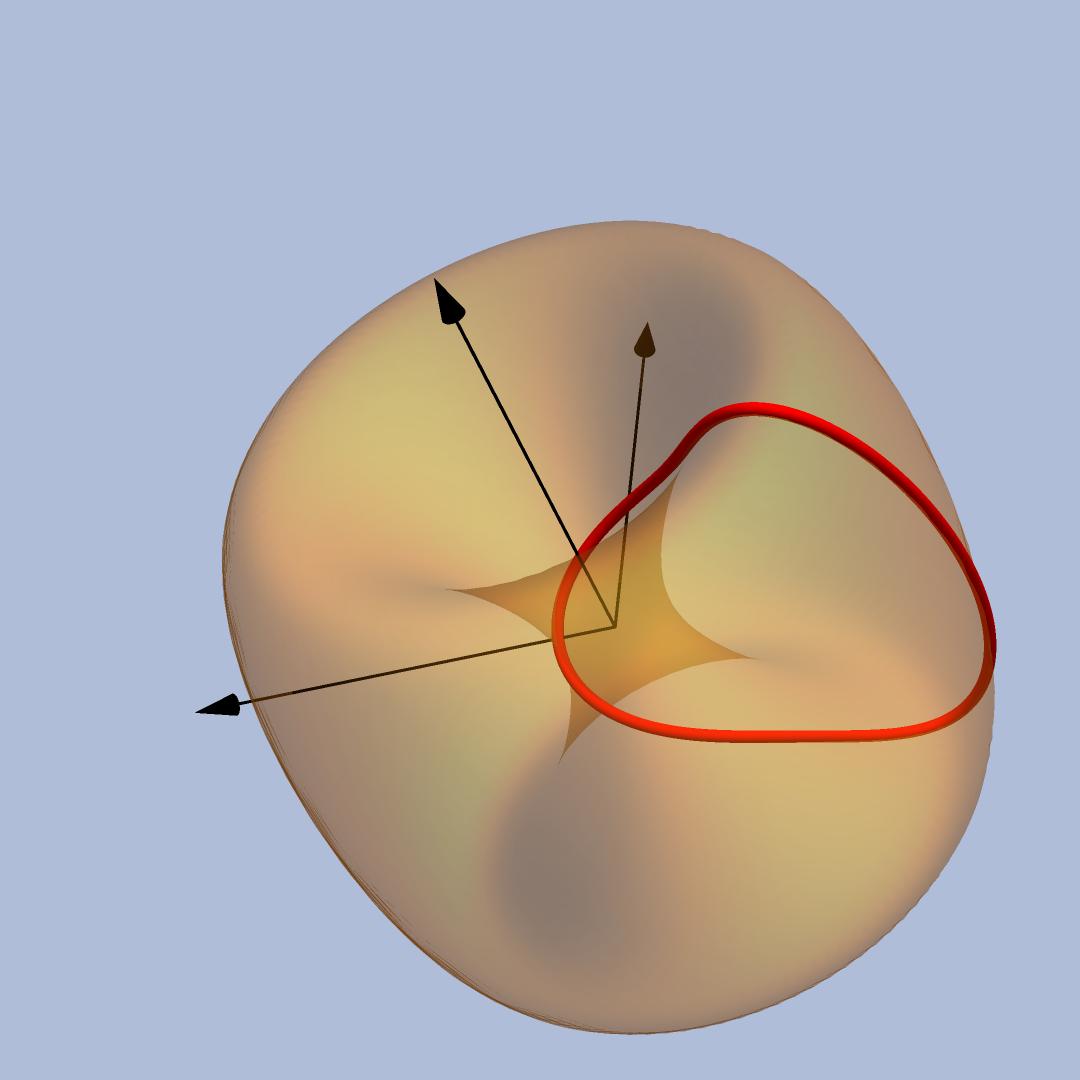}\qquad
\includegraphics[height=5cm,width=5cm]{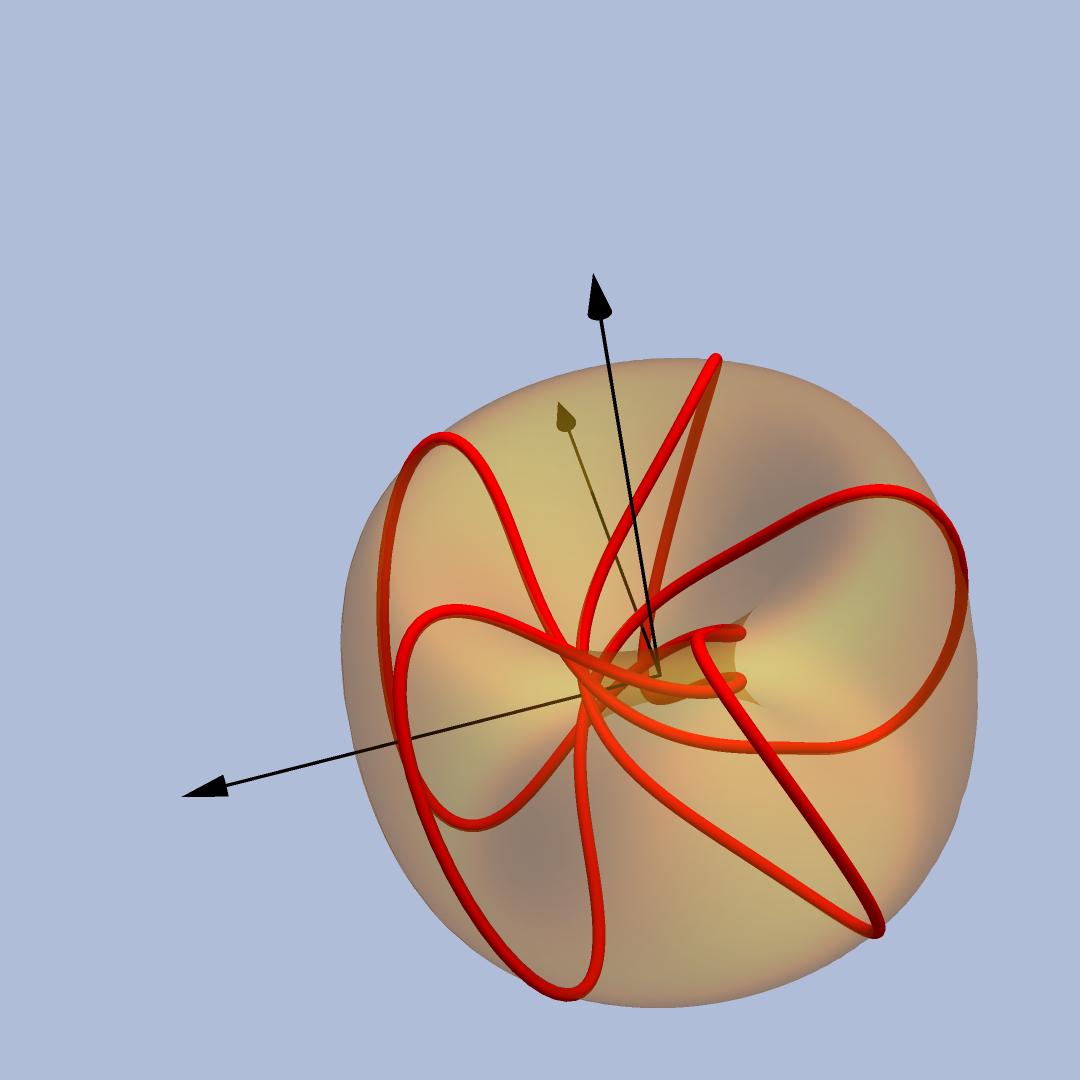}
\caption{Left: the Heisenberg projection of the Legendrian knot $\gam_{1,1}$, a topologically trivial knot with Maslov index $0$ and Bennequin invariant $-1$.
Right: the Heisenberg projection of the Legendrian knot~$\gam_{3,5}$, a torus knot of type $(-3,5)$ with Maslov index $2$ and Bennequin invariant $-15$. The tori are the Heisenberg projections of ${\mathcal T}_{m,n}\subset S^3$, $m=n=1$ (left) and $m=3$, $n=5$ (right).}
\end{figure}

\begin{proof} Let $\gam_{m,n}\colon\R\to S^3$ be defined by
\begin{equation}\label{gammn}\gam_{m,n}(s)=\frac{1}{\sqrt{m+n}}\bigl(\sqrt{m}{\rm e}^{-\ri ns/{\sqrt{mn}}}, \sqrt{n}{\rm e}^{\ri ms/{\sqrt{mn}}}\bigr).
\end{equation}
One can check that $\gam_{m,n}$ is a unit-speed Legendrian curve with curvature $\kk_{m,n}=(m-n)/\sqrt{mn}$ and least period $2\pi\sqrt{mn}$. Thus, the total curvature of $\gam_{m,n}$ is $m-n$. By construction, $\gam_{m,n}$~is a~closed solenoidal curve of type $(-m,n)$ contained in the embedded torus ${\mathcal T}_{m,n}\subset S^3$ parametrized by
\[f_{m,n}(\theta_1,\theta_2)=\frac{1}{\sqrt{m+n}}\bigl( \sqrt{m}{\rm e}^{\ri \theta_1}, \sqrt{n}{\rm e}^{\ri \theta_2}\bigr).
\]
The Clifford projection of $\gam_{m,n}$ is
\[\eta_{m,n}(s)=(h, \ell \cos(2s/\ell),\ell \sin(2s/\ell)),
\]
where we define $\ell:=2\sqrt{mn}/(m+n)$ for short.
Thus, $\eta_{m,n}$ is a parametrization with constant speed $2$ of $\rC_{m,n}$. The least period of $\eta_{m,n}$ is $\pi\ell$, while the least period of $\gam_{m,n}$ is $2\pi\sqrt{mn}$; it follows from Proposition~\ref{Maslov} that $m+n$ is the Clifford index of $\gam_{m,n}$.

As in the proof of Proposition~\ref{Maslov},
we use $\sdown = 2s$ to denote the arclength parameter along the Clifford projection, and let $\check{\eta}_{m,n}(\sdown)=\eta_{m,n}(\sdown/2)$ be its unit-speed reparametrization.
This has least period $2\pi \ell$ and curvature $h/\ell$. Thus, its Frenet frame is of the form
$F_{m,n}(\sdown)=\rA \exp \left(\sdown {\rm M}_{m,n}\right)$,
where $\rA$ is some fixed matrix in ${\rm SO}(3)$ and
\[{\rm M}_{m,n}=\begin{pmatrix} 0&-1&0\\1&\hphantom{-} 0&-h/\ell \\0&\hphantom{-}h/\ell &0\end{pmatrix}.\]
Since the Lie algebra isomorphism induced by the spin-covering homomorphism $\sigma$ is
\[ \begin{pmatrix} \ri b^1_1&-a^2_1+\ri b^2_1\\ a^2_1+\ri b^2_1&-\ri b^1_1 \end{pmatrix} \mapsto
\begin{pmatrix} 0 & -2a^2_1 & -2b^2_1\\ 2a^2_1 & \hphantom{-}0 & \hphantom{-} 2b^1_1\\ 2b^2_1 &-2 b^1_1&\hphantom{-}0 \end{pmatrix},
\]
the lift of $F_{m,n}$ to ${\rm SU}(2)$ is $\hat{F}_{m,n}(\sdown) = \hat{{\rA}} \exp(s\hat{{\rm M}}_{m,n})$, where $\sigma(\hat{\rA}) = \rA$ and
\[\hat{{\rm M}}_{m,n}=\begin{pmatrix} -\ri h/(2\ell) & -1/2 \\ 1/2 &\ri h/(2\ell) \end{pmatrix}.
\]
On the other hand, the eigenvalues of $\hat{\rm M}_{m,n}$ are $\pm \ri/(2\ell)$, so it follows that the least period of~$\hat{F}_{m,n}$ is $4\pi\ell$, which is twice the least period of $F_{m,n}$. This proves that $\rC_{m,n}$ has spin $1/2$.
\end{proof}

\begin{Remark}
Proposition~\ref{constantcurvature} is an adaptation to the context of pseudo-Hermitian geometry of the classification of Legendrian curves with constant CR-curvature given in \cite{MS}. It follows from Propositions ~\ref{Maslov} and~\ref{constantcurvature} that the Maslov index of $\gam_{m,n}$ is $m-n$.
Computing the writhe of the Lagrangian projection of $\bph \circ \gam_{m,n}$,
we find that the Bennequin invariant of $\gam_{m,n}$ is~$-mn$. Thus, according to the classification of the contact isotopy classes of Legendrian torus knots~\cite{EH}, closed Legendrian curves with constant curvature provide explicit models for negative torus knots with maximal Maslov index and maximal Bennequin number.
\end{Remark}

\begin{Remark}
We also note a rather curious fact: the inversion of the Lagrangian projection of $\bph\circ \gamma_{m,n}$ with respect to the origin is an epicycloid traced by the path of a point at distance $\bigl(\sqrt{m}+\sqrt{m+n}\bigr)/\sqrt{2n}$ from the origin, generated by rolling a circle of radius $r=m/\sqrt{2n(m+n)}$ along a fixed circle of radius \smash{$R=n/\sqrt{2n(m+n)}$} centered at the origin (see Figure \ref{FIG1S1}). Indeed, from \eqref{heisenbergprojection} and \eqref{gammn}, it follows that the Lagrangian projection of $\bph \circ \gam_{m,n}$ is
\[
\alpha_{m,n}(u)=\frac{1}{\varrho_{m,n}(u)}\biggl(-c_{m,n}\sin u-d_{m,n}\sin\biggl(\frac{m+n}{m}u\biggr),c_{m,n}\cos u+d_{m,n}\cos\biggl(\frac{m+n}{m}u\biggr)\biggr),
\]
where $c_{m,n}=\sqrt{2n(m+n)}$, $d_{m,n}=\sqrt{2m n}$, $\varrho_{m,n}(u)=2m+n+2\sqrt{m(m+n)}\cos(n u/m)$ and we have made the change of
variable $u=\sqrt(m/n)s$.
Thus, the inversion $\beta_{m,n}=\|\alpha_{m,n}\|^{-2}\alpha_{m,n}$ is
\[
\beta_{m,n}(u)=\biggl(-\hat{c}_{m,n}\sin u-\hat{d}_{m,n}\sin\biggl(\frac{m+n}{m}u\biggr),\hat{c}_{m,n}\cos u+\hat{d}_{m,n}\cos\biggl(\frac{m+n}{m}s\biggr)\biggr),
\]
where $\hat{c}_{m,n}=\sqrt{m+n}/\sqrt{2n}=R+r$ and $\hat{d}_{m,n}=\sqrt{m}/\sqrt{2n}$.
By comparison, the epicycloid traced by the path of the point $(a+R+r,0)$, generated by rolling of a circle with radius $r$ on the fixed circle of radius $R$ centered at the origin, can be parametrized by
\[
\eta_{R,r,a}(u)=\biggl((R+r)\cos u+a\cos\biggl(\frac{R+r}{r}u\biggr),(R+r)\sin u+a\sin\biggl(\frac{R+r}{r}u\biggr)\biggr).
\]
Hence, if we set $a=\hat{d}_{m,n}$, then $|[\beta_{m,n}]$ is obtained by rotating the epicycloid by $\pi/2$ about the origin.
\end{Remark}

\begin{figure}[t]\centering
\includegraphics[height=5cm,width=5cm]{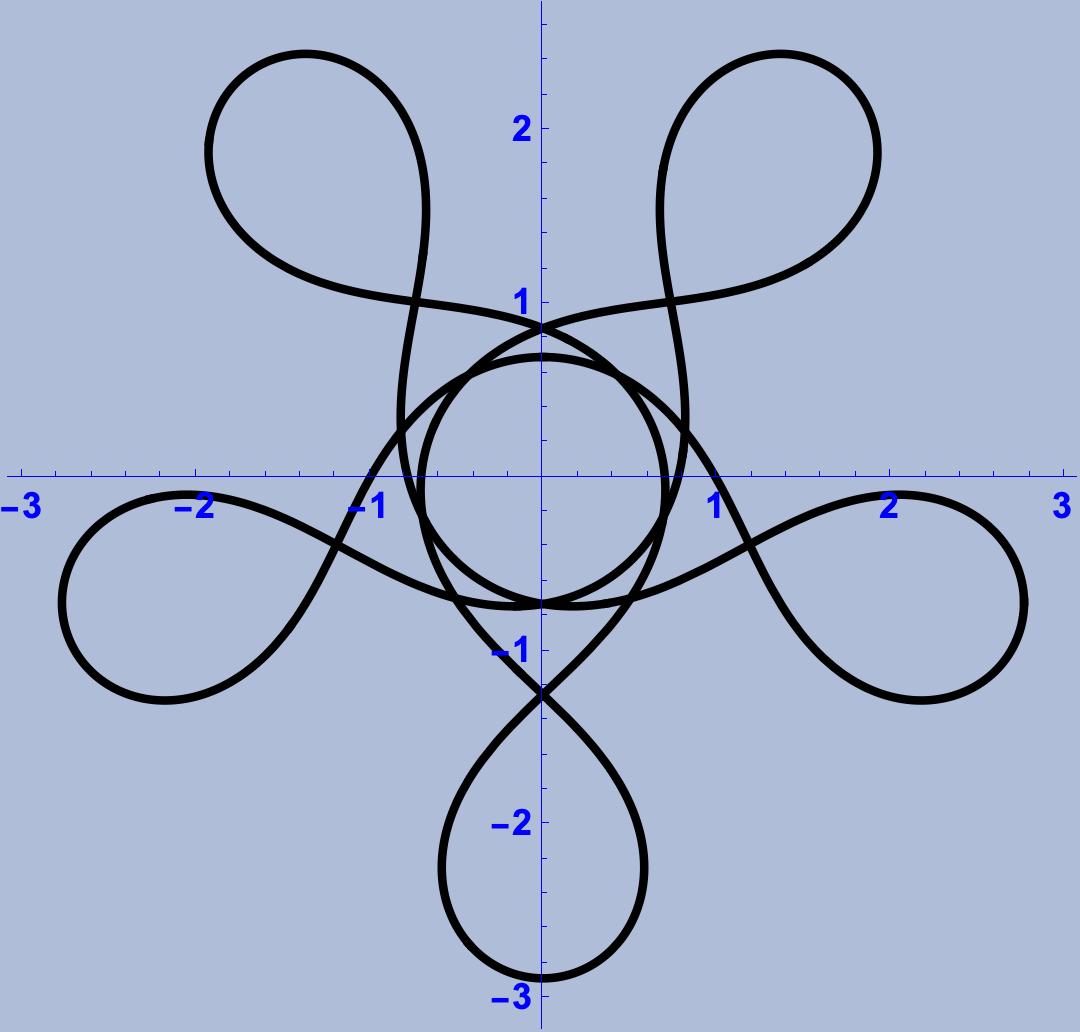}\qquad
\includegraphics[height=5cm,width=5cm]{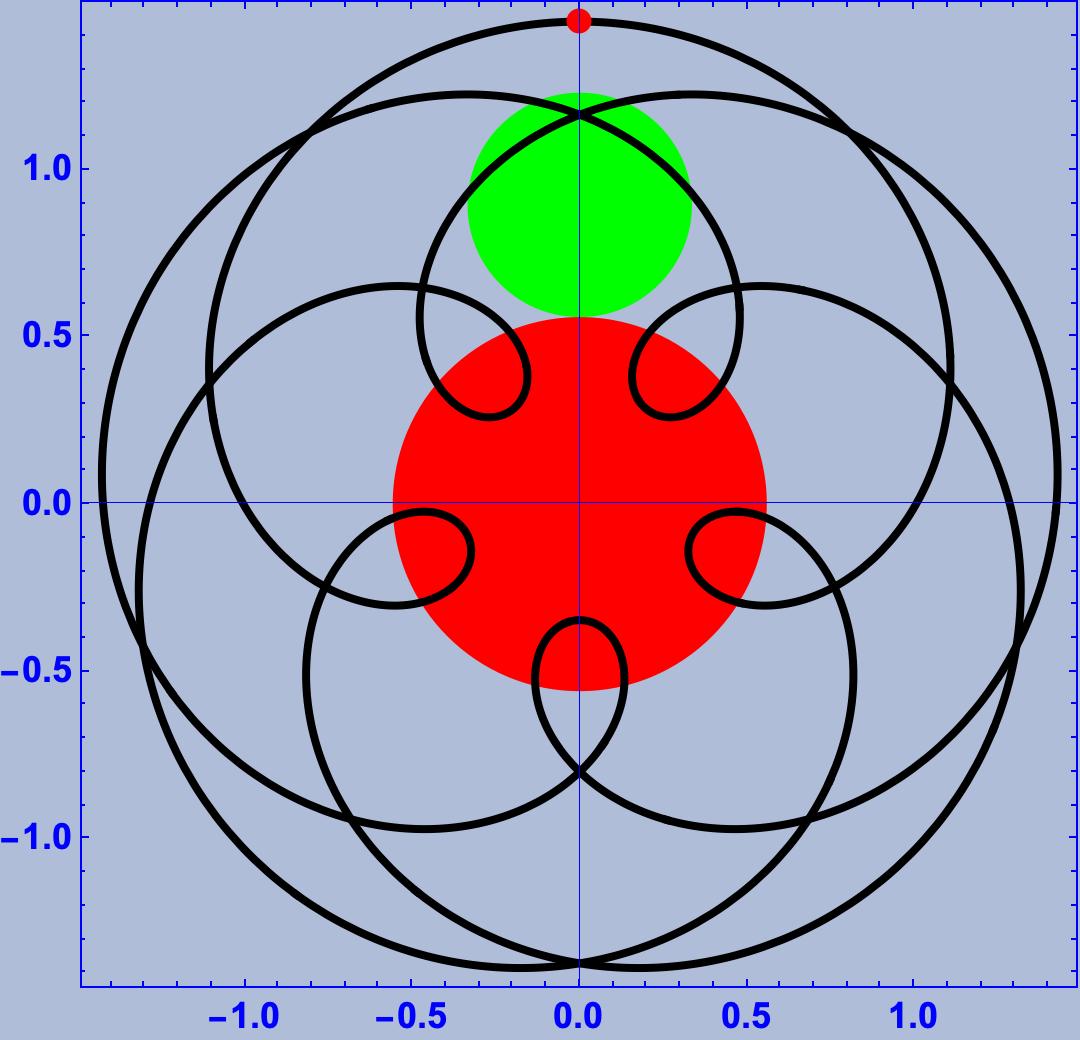}
\caption{Left: the Lagrangian projection $\alpha_{3,5}$ of $\bph \circ \gam_{3,5}$.
Its turning number is $2$, and there are fifteen ordinary double points, each with intersection index $-1$. Right: the epicycloid obtained inverting~$\alpha_{3,5}$ with respect to the origin.}\label{FIG1S1}
\end{figure}

\section{mKdV-type flows for Legendrian curves}\label{sec3}
\subsection{Symplectic structure}

Recall that $\Leg_L$ denotes the space of regular {\em periodic} parametrized Legendrian curves ${\gam\colon \R \to S^3}$ with period $L$ in parameter $x$. The space $\Leg_L$ has the structure of an infinite-dimensional manifold.\footnote{It can be shown that $\Leg_L$ has a Fr\'echet manifold structure by adapting the argument used in Section~43.19 of Kriegl and Michor's monograph~\cite{KM97} for the group of contact diffeomorphisms.
See also Lerario and Mondino~\cite{LM19}, where a Hilbert manifold structure is discussed, and Haller and Vizman~\cite{HV22}, where it is proven that the space of weighted Legendrian knots in a contact 3-manifold is a co-adjoint orbit of the group of contact diffeomorphisms, endowed with a Fr\'echet manifold structure. } We begin by characterizing its tangent spaces.

\begin{Lemma}\label{tanque}
Assume $\gam \in \Leg_L$ and let $\gamf(x,t)$ be a variation of $\gam$, i.e., $\gamf(x,t)$ is smooth, belongs to~$\Leg_L$ for every fixed $t$ $($for $|t|$ sufficiently small$)$ and $\gamf(x,0)=\gam(x)$. Let
$\VV = \frac{\partial \gamf(x,t)}{\partial t}\Big|_{t=0}.$
Then the variation vector field $\VV$ along $\gam$ is of the form
\[\VV = p\gam_x + q (\ri\gam_x) + r (\ri \gam),\]
where $p$, $q$, $r$ are real-valued $L$-periodic functions of $x$ satisfying $r_x = 2q \langle\gam_x, \gam_x\rangle.$
\end{Lemma}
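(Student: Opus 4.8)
The plan is to express the variation field $\VV$ in the frame $\{\gam_x, \ri\gam_x, \ri\gam\}$ and then read off the two constraints from the two defining conditions of $\Leg_L$ (the pointwise $|\gam|=1$ condition and the Legendrian condition $\langle\gam_x,\gam\rangle=0$). First I would note that since $\gamf(x,t)\in S^3$ for all $t$, the four real vectors $\gam$, $\ri\gam$, $\gam_x$, $\ri\gam_x$ span $\C^2$ as a real vector space at each point where $\gam$ is regular (this follows from $\langle\gam,\gam\rangle=1$, the Legendrian condition $\langle\gam_x,\gam\rangle=0$, and the fact that $\langle\gam_x,\gam_x\rangle=\speed^2\neq 0$, so these vectors are pairwise orthogonal with respect to the real inner product $\Re\langle-,-\rangle$). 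Hence I can write $\VV = a\gam + r(\ri\gam) + p\gam_x + q(\ri\gam_x)$ for some real-valued functions $a,p,q,r$ of $x$, which are $L$-periodic because $\gamf$ is $L$-periodic in $x$ for each $t$.

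Next I would impose the constraint coming from $\langle\gamf,\gamf\rangle=1$: differentiating in $t$ at $t=0$ gives $\Re\langle\VV,\gam\rangle=0$, and since $\Re\langle\gam,\gam\rangle=1$, $\Re\langle\ri\gam,\gam\rangle=0$, $\Re\langle\gam_x,\gam\rangle=0$ (Legendrian), $\Re\langle\ri\gam_x,\gam\rangle=\Im\langle\gam_x,\gam\rangle=0$ (also from Legendrian, since $\langle\gam_x,\gam\rangle=0$), this forces $a=0$. So indeed $\VV=p\gam_x+q(\ri\gam_x)+r(\ri\gam)$ with real periodic $p,q,r$, as claimed.

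The one remaining relation is the one I expect to be the only real computation: differentiating the Legendrian condition $\langle\gamf_x,\gamf\rangle=0$ in $t$ at $t=0$ gives $\langle\VV_x,\gam\rangle+\langle\gam_x,\VV\rangle=0$, a complex equation whose real and imaginary parts must both vanish. I would substitute $\VV=p\gam_x+q(\ri\gam_x)+r(\ri\gam)$, use $\gam_{xx}=-\speed^2\gam+\bigl(\speed_x/\speed+\ri\speed\kk\bigr)\gam_x$ from \eqref{gamxx} to eliminate second derivatives, and use the orthogonality relations $\langle\gam,\gam\rangle=1$, $\langle\gam_x,\gam\rangle=0$, $\langle\gam_x,\gam_x\rangle=\speed^2$. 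The real part of $\langle\gam_x,\VV\rangle$ is $p\,\speed^2$, and the real part of $\langle\VV_x,\gam\rangle$ works out to $-p\,\speed^2$ (the $\gam_x$ and $\ri\gam_x$ terms in $\VV_x$ are orthogonal to $\gam$, and only the derivative of the $r(\ri\gam)$ term and the $p\gam_{xx}$ term contribute, the latter through its $-\speed^2\gam$ component); so the real part is automatically satisfied and imposes nothing. The imaginary part is where $r_x$ appears: the term $r_x(\ri\gam)$ in $\VV_x$ contributes $r_x\langle\ri\gam,\gam\rangle$, whose imaginary part is $r_x$, while the $2q\langle\gam_x,\gam_x\rangle = 2q\,\speed^2$ appears from matching the $\ri\gam_x$ contributions across the two terms; collecting everything yields exactly $r_x = 2q\langle\gam_x,\gam_x\rangle$.

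The main obstacle is purely bookkeeping: keeping track of which terms are real versus imaginary when differentiating, and correctly using \eqref{gamxx} to reduce $\gam_{xx}$ — the $\Im\langle\gam_{xx},\gam_x\rangle = \speed^3\kk$ relation is what makes the $\ri\speed\kk$ coefficient drop out of the final constraint, leaving only the clean relation between $r_x$ and $q$. There is no conceptual difficulty; once the frame decomposition is set up and $a=0$ is established, the single surviving constraint falls out of one differentiation and a short orthogonality computation.
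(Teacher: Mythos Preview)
Your proposal is correct and follows essentially the same approach as the paper: differentiate the Legendrian condition $\langle\gamf_x,\gamf\rangle=0$ in $t$, substitute the frame expansion of $\VV$, and read off $r_x=2q\langle\gam_x,\gam_x\rangle$. Two minor differences worth noting: the paper takes the form $\VV=p\gam_x+q(\ri\gam_x)+r(\ri\gam)$ for granted (tangency to $S^3$ being implicit), whereas you spell out the real basis and eliminate the $\gam$-component via $\Re\langle\VV,\gam\rangle=0$; and the paper avoids invoking the full formula \eqref{gamxx} by simply differentiating $\langle\gam_x,\gam\rangle=0$ to get $\langle\gam_{xx},\gam\rangle=-\langle\gam_x,\gam_x\rangle$, so the curvature never enters the computation at all (your remark about the $\ri\speed\kk$ term ``dropping out'' is true but for a more trivial reason than you suggest --- it sits in the $\gam_x$-component of $\gam_{xx}$, which is already orthogonal to $\gam$).
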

\begin{proof}
We compute
\begin{align*}
0=\di/\di{t}\bert_{t=0} \bigl\langle \gamf_x, \gamf \bigr\rangle={}& \langle \VV_x, \gam\rangle + \langle \gam_x, \VV \rangle \\
={}& \langle (p_x+\ri q_x) \gam_x + r_x (\ri \gam) + (p+\ri q)\gam_{xx} + r (\ri \gam_x), \gam\rangle \\
&{}+ \langle \gam_x, (p+\ri q) \gam_x +r(\ri \gam)\rangle\\
={}& \ri r_x \langle \gam,\gam\rangle + (p+\ri q) \langle \gam_{xx}, \gam\rangle + (p-\ri q) \langle \gam_x, \gam_x\rangle,
\end{align*}
where several terms vanish because $\langle \gam_x, \gam \rangle= 0$. Differentiating the latter gives
$\langle \gam_{xx}, \gam\rangle + \langle \gam_x, \gam_x\rangle=0$. Using this, and $\langle \gam, \gam \rangle=1$, we get
\[
0=\di/\di{t}\bert_{t=0} \bigl\langle \gamf_x, \gamf \bigr\rangle = \ri r_x -2\ri q \langle \gam_x, \gam_x \rangle.
\tag*{\qed}
\]
\renewcommand{\qed}{}
\end{proof}

In particular, the deformations generated by vector fields
$\VH = \ri \gam$ and $\VR_f = f(x) \gam_x$, where $f(x)$ is any $L$-periodic function, preserve the Legendrian condition. These deformations are, respectively, a constant-speed rotation along the fibers of the Clifford map, and reparametrization in $x$ (while fixing the period). We wish to consider functionals on $\Leg_L$ that
are invariant under these transformations (which form a group); accordingly, we define $\Legq_L$ to be the quotient of $\Leg_L$ by the action of this group,
and let $[\gam] \in \Legq_L$ denote the equivalence class of $\gam \in \Leg_L$.

A tangent vector $\qV \in T_{[\gamma]} \Legq_L$ corresponds to an equivalence
class of vector fields $\VV$ along $\gam$, any two of which differ by adding the sum of a constant multiple of $\VH$ and a vector field of the form $\VR_f$.
Thus,
$\qV$
has a unique representative of the form
\[\VV_0 =\dfrac{r_x}{2|\gam_x|^2} (\ri \gam_x) + r (\ri \gam), \]
such that $\int_0^L r\dx =0$.

We define the following skew-symmetric form on $\Legq_L$. Given $\qV, \qW \in T_{[\gamma]}\Legq_L$, choose
representatives $\VV,\VW \in T_\gamma \Leg_L$, and compute
\begin{equation}\label{our2form}
\Omega_{[\gamma]} (\qV, \qW)
= -\int_0^L \det{}_\R( \gam, \gam_x, \VV, \VW)\dx,
\end{equation}
where the notation $\det_\R$ means that we apply a standard\footnote{%
In other words, column $(z_1, z_2)^{\mathsf{T}} \in \C^2$ is identified with column $(\Re z_1, \Im z_1, \Re z_2, \Im z_2)^{\mathsf{T}}\in \R^4$.}
identification $\C^2 \cong \R^4$ to each vector before taking the determinant of the resulting $4\times 4$ matrix.
(Note that the value of $\Omega_{[\gamma]}(\qV, \qW)$
does not depend on the choice of representative.)
If we write
\[\VV = p_V\gam_x + q_V (\ri\gam_x) + r_V (\ri \gam)\]
and similarly for $\VW$, then it is easy to compute that
\[\Omega_{[\gamma]}(\qV, \qW) = \frac12\int_0^L (r_V (r_W)_x - r_W (r_V)_x) \dx = \int_0^L r_V (r_W)_x \dx.\]

\begin{Theorem} $\Omega$ is a symplectic form on $\Legq_L$.
\end{Theorem}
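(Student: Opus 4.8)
The plan is to check that $\Omega$ is well defined, skew-symmetric, non-degenerate, and closed, the last being the only substantive point. Skew-symmetry is immediate from \eqref{our2form}, since transposing the last two columns of the determinant changes its sign. Well-definedness on $\Legq_L$ is the representative-independence already recorded just after \eqref{our2form}; concretely, $\Omega$ annihilates $\VH=\ri\gam$ (for which $r\equiv1$, so $\int_0^L 1\cdot(r_W)_x\dx=0$ by $L$-periodicity) and every reparametrization field $\VR_f=f(x)\gam_x$ (for which $r\equiv0$), i.e.\ the infinitesimal generators of the group we quotiented by.

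For closedness I would not work on $\Legq_L$ directly but view the $2$-form \eqref{our2form} on the ambient mapping space $\mathcal M:=\mathrm{Map}(S^1_L,S^3)$, restricted to $\Leg_L\subset\mathcal M$, and recognize it as a transgression of the Riemannian volume form $\nu$ of $S^3$. Indeed, for $p\in S^3\subset\R^4$ the unit outward normal at $p$ is $p$ itself, so $\nu_p(v_1,v_2,v_3)=\pm\det{}_\R(p,v_1,v_2,v_3)$ for $v_i\in T_pS^3$. Hence, writing $\mathrm{ev}\colon S^1_L\times\mathcal M\to S^3$, $(x,\gam)\mapsto\gam(x)$, and integrating $\mathrm{ev}^*\nu$ along the $S^1_L$-fibre, one obtains precisely the $2$-form $\gam\mapsto\big[(\VV,\VW)\mapsto\pm\int_0^L\det{}_\R(\gam,\gam_x,\VV,\VW)\dx\big]$, that is, $\pm\Omega$ on $\Leg_L$. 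Because integration over the closed fibre $S^1_L$ commutes with exterior differentiation, and because $\nu$ is of top degree on the $3$-manifold $S^3$ so that $d\nu=0$, it follows that $d\Omega=0$ on $\mathcal M$, a fortiori on $\Leg_L$; then $\pi^*\bigl(d\Omega\bigr)=d\bigl(\pi^*\Omega\bigr)=0$ and the injectivity of $\pi^*$ for the surjective submersion $\pi\colon\Leg_L\to\Legq_L$ yields $d\Omega=0$ on $\Legq_L$. (One could instead compute $d\Omega$ from Cartan's formula using the reduced expression $\Omega(\qV,\qW)=\int_0^L r_V(r_W)_x\dx$, but that requires tracking Lie brackets of vector fields on $\Legq_L$, which the transgression route avoids.)

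For non-degeneracy — in the weak sense appropriate to an infinite-dimensional manifold — suppose $\qV\in T_{[\gamma]}\Legq_L$ satisfies $\Omega_{[\gamma]}(\qV,\qW)=0$ for all $\qW$. Using the canonical representatives introduced just before the theorem, $r_W$ ranges over all smooth $L$-periodic functions with $\int_0^L r_W\dx=0$ as $\qW$ varies, so $(r_W)_x$ ranges over all smooth $L$-periodic functions of zero mean; hence $\int_0^L r_V\,g\dx=0$ for every such $g$, which forces $r_V$ to be constant. Since the canonical representative of $\qV$ also has $\int_0^L r_V\dx=0$, we conclude $r_V\equiv0$, whence $\VV_0=\frac{(r_V)_x}{2|\gam_x|^2}(\ri\gam_x)+r_V(\ri\gam)=0$ and $\qV=0$.

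The crux is the closedness, and within it the step identifying the integrand $\det{}_\R(\gam,\gam_x,\cdot,\cdot)$ with the pullback of the volume form of $S^3$ under evaluation, integrated over the loop parameter. Once that is in place, closedness is a formal consequence of the commutation of $d$ with fibre integration together with $\dim S^3=3$; the only real care is to run this argument on the ambient mapping space and then transfer it to $\Leg_L$ and to the quotient, rather than attempting a direct computation on $\Legq_L$.
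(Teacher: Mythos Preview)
Your argument is correct and follows essentially the same route as the paper: both identify $\Omega$ as the transgression (fibre integration over $S^1_L$) of the volume form $\nu$ on $S^3$ pulled back by the evaluation map, and deduce closedness from $d\nu=0$ together with the commutation of $d$ with fibre integration. The paper packages this step in Vizman's ``hat pairing'' formalism and cites her Theorem~1 for the identity $\mathbf d(\widehat{\omega\cdot\alpha})=\widehat{(\mathbf d\omega)\cdot\alpha}+(-1)^p\widehat{\omega\cdot\mathbf d\alpha}$, whereas you state the same principle directly; your added remarks on descending closedness to the quotient and your more explicit non-degeneracy argument are fine elaborations of points the paper leaves terse.
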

\begin{proof}
The skew-symmetric product is non-degenerate, since
if $\Omega_{[\gamma]}(\qV, \qW) = 0$
for all $\qV \in T_{[\gamma]} \Legq_L$, then $r_W$ is constant, and hence $\VW$ is equivalent to zero.

\newcommand{\ev}{\operatorname{ev}}
\newcommand{\pr}{\operatorname{pr}}
The fact that $\Omega$ is a closed 2-form follows as a special case of the calculus developed by Vizman in~\cite{V11} for a `hat pairing' of differential forms. Let $S$ be a compact oriented $k$-dimensional manifold and $M$ a finite-dimensional manifold, and let $\omega$ be a differential $p$-form on $M$ and $\alpha$ a~differential $q$-form on $S$. Then the pairing
\begin{equation*}
\widehat{\omega \cdot \alpha}:=\dashint_S \ev^* \omega \wedge \pr^* \alpha,
\end{equation*}
defines a differential $(p+q-k)$-form on $\mathcal{F}(S,M)$, the space of smooth functions from $S$ to $M$. Here $\ev\colon S \times \mathcal{F}(S,M) \rightarrow M$ is the evaluation map $\ev(x, f):=f(x)$, $\pr\colon S \times \mathcal{F}(S,M) \rightarrow S$ the projection
 $\pr(x, f):=x$, and $\dashint_S$ denotes the fiber integration. Then by \cite[Theorem 1]{V11},
 \begin{equation}
 \label{dif}
 {\bf{d}}(\widehat{\omega \cdot \alpha})=\widehat{({\bf d} \omega) \cdot \alpha}+(-1)^p \widehat{\omega \cdot {\bf d} \alpha}.
 \end{equation}

We will show that the 2-form $\Omega$ in \eqref{our2form} can be realized as the hat pairing $\widehat\nu =\widehat{\nu \cdot 1}=\dashint_{S^1} {\rm ev}^* \nu$, where $\nu$ is the canonical volume form on $S^3$ and $1$ is the constant function on $S^1$. \big(Note that $\nu$ is the pullback to the sphere of the interior product $\imath_E \mu$, where $\mu$ is the standard volume form on $\R^4$ and $E$ is the Euler vector field $r\di/\di r$.\big)

When $\gam$ is an embedding of the circle into the 3-sphere, $\widehat\nu$ applied to a
given $V$ and $W$ in~$T_\gamma \mathcal{F}\bigl(S^1, S^3\bigr)$ reduces to the so-called {\em transgression map} (see~\cite{V11})
\[
\widehat\nu (V, W)=\int_{S^1}\gamma^*(\imath_W (\imath_V \nu) ).
\]
We compute
\[
\widehat\nu (V, W)=\int_\gamma \imath_W (\imath_V\nu) =\int_0^L \nu (\gam_x, V, W) \dx=\int_0^L \det{}_\R( \gam, \gam_x, V, W) \dx.
\]
The closure of $\Omega$ follows immediately from the hat calculus formula~\eqref{dif}, since
${\bf d} \nu=0$, as $\nu$ is a volume form on $S^3$, and ${\bf d} 1=0$.
\end{proof}

The symplectic form defines a correspondence between functionals on
$\Legq_L$ (Hamiltonians) and vector fields in $T \Legq_L$ in the usual way. Namely, given a smooth functional $\funH\colon \Legq_L \to \R$, the associated Hamiltonian vector field $\VW_\funH$ is defined by the correspondence
\[
\rd \funH_{[\gamma]} (\VV)=\Omega_{[\gamma]}(\VV, \VW_\funH).
\]
In other words, whenever $\gamf(x,t)$ is a one-parameter family of Legendrian curves such that ${\rm d}\gamf/{\rm d}t\vert_{t=0} = \VV$, then
\[\dfrac{{\rm d}}{{\rm d}t}\bigg\vert_{t=0} \funH \left( \gamf(\cdot, t)\right) = \Omega_{[\gamma]}(\VV, \VW_\funH).\]

\begin{Proposition}\label{hamlength} Let $\funA$ denote the total arclength functional on $\Legq_L$, defined by
\[\funA\colon \ [\gam] \mapsto \int_0^L \langle \gam_x ,\gam_x \rangle^{1/2} \dx.\]
Then the Hamiltonian vector field for $4\funA$ is equivalent to
\[
\VW_\funA=\dfrac{ \kk_x}{\langle \gam_x, \gam_x\rangle} (\ri \gam_x) + 2 \kk(\ri \gam).
\]
\end{Proposition}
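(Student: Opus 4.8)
The plan is to compute the differential of $4\funA$ along an arbitrary variation $\gamf(x,t)$ of $\gam$ and then match it against the expression $\Omega_{[\gamma]}(\VV, \VW)$ in terms of the $r$-components. First I would fix a variation with variation field $\VV = p\gam_x + q(\ri\gam_x) + r(\ri\gam)$, as in Lemma~\ref{tanque}, and recall that $\frac{\rm d}{{\rm d}t}\langle \gam_x,\gam_x\rangle^{1/2} = \langle \gam_x,\gam_x\rangle^{-1/2}\Re\langle \VV_x, \gam_x\rangle$. Substituting $\VV_x$ and using the structure equations \eqref{gamss}--\eqref{gamxx} (written in the arbitrary parameter $x$, with speed $\speed$) to eliminate $\gam_{xx}$, one reduces $\Re\langle\VV_x,\gam_x\rangle$ to an expression in $p$, $q$, $r$ and their $x$-derivatives together with $\speed$ and $\kk$. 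I expect the $p$-terms to assemble into a total $x$-derivative (reflecting reparametrization invariance, hence they integrate to zero over the period), and the remaining terms to be linear in $q$ and $r$ after using the constraint $r_x = 2q\speed^2$ from Lemma~\ref{tanque} to trade $q$ for $r_x/(2\speed^2)$.

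The next step is to integrate by parts to put ${\rm d}(4\funA)_{[\gamma]}(\VV)$ into the form $\int_0^L r_V\,(\text{something})_x\,\dx$, since by the computation just before the theorem we know $\Omega_{[\gamma]}(\qV,\qW) = \int_0^L r_V(r_W)_x\,\dx$. Reading off the coefficient then identifies $r_W$ up to an additive constant — and the additive constant is immaterial because we work in $\Legq_L$, where shifting $r_W$ by a constant changes $\VW$ only by a multiple of $\VH = \ri\gam$. Concretely, I anticipate that after the dust settles the coefficient paired against $r_V$ is (a constant multiple of) $\left(\tfrac{\kk_x}{\speed^2} (\ri\gam_x) + 2\kk(\ri\gam)\right)$ read through its own $r$-component, i.e. $r_W = 2\displaystyle\int^x \kk\,\speed\,{\rm d}x'$ so that $(r_W)_x = 2\kk\speed$; one then checks that the representative $\VV_0$-form of this $\VW$ is exactly $\VW_\funA = \tfrac{\kk_x}{\langle\gam_x,\gam_x\rangle}(\ri\gam_x) + 2\kk(\ri\gam)$, using $r_x = 2q\speed^2$ once more to recover the $(\ri\gam_x)$-coefficient $q = (r_W)_x/(2\speed^2) = \kk/\speed = \kk_x/\speed^2$... — here I would be careful: the $q$-component should come out as $\kk_x/\speed^2$ rather than $\kk/\speed$, so the antiderivative defining $r_W$ must actually be $r_W = 2\int^x \kk_{x'}\,{\rm d}x' = 2\kk$ up to a constant, giving $(r_W)_x = 2\kk_x$ and $q_W = \kk_x/\speed^2$ and $r_W = 2\kk$ — consistent with the stated $\VW_\funA$.

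The main obstacle is purely computational bookkeeping: correctly propagating the speed factors $\speed$ through the first-variation formula and the structure equation \eqref{gamxx} (which carries the extra term $(\speed_x/\speed)\gam_x$), and verifying that all reparametrization ($p$-dependent) terms and all $\langle\gam,\gam\rangle$- and $\langle\gam_x,\gam\rangle$-type terms cancel so that the answer depends on $\VV$ only through $r_V$. A clean way to control this is to do the computation at a point where $\gam$ is unit-speed (legitimate since $\funA$ and $\Omega$ are both reparametrization-invariant, so it suffices to identify $\VW_\funA$ modulo $\VR_f$), compute everything with $s$ in place of $x$ and $\speed=1$, obtain $r_W = 2\kk$ directly, and then restore the general-parameter form by the substitution $\langle\gam_x,\gam_x\rangle = \speed^2$ in the $(\ri\gam_x)$-coefficient. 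Finally I would remark that $\funA$ is indeed smooth on $\Legq_L$ (the integrand is reparametrization- and $\VH$-invariant), so the Hamiltonian vector field is well defined, completing the proof.
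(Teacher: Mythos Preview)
Your proposal is correct and follows essentially the same route as the paper: compute the first variation of arclength via $\Re\langle \VV_x,\gam_x\rangle$, observe that the $p$-contribution integrates to zero, pass to a constant-speed representative, and match against $\Omega$ using the constraint $r_x = 2\speed^2 q$ to read off $r_W = 2\kk$ (for $4\funA$). One small caveat: since $\Leg_L$ fixes the parameter-period at $L$, you cannot in general pick a \emph{unit}-speed representative, only a constant-speed one with $\speed = \funA(\gam)/L$; the paper does exactly this, and the constant drops out of the identification just as you anticipate.
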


\begin{proof}
Let $\gam \in \Leg_L$, let $\gamf(x,t)$ be a variation of $\gam$, and let
\begin{equation}\label{VVdef}
\VV = \dfrac{\di \gamf(x,t)}{\di t}\Big|_{t=0} = p_V\gam_x + q_V (\ri\gam_x) + r_V (\ri \gam).
\end{equation}
Using \eqref{generalkform}, we compute
\begin{align}
\frac{\di }{\di t}\bigg|_{t=0} \bigl\langle \gamf_x , \gamf_x \bigr\rangle^{1/2}
& =\bigl\langle \gamf_x , \gamf_x \bigr\rangle^{-1/2} \Re \bigl\langle \gamf_{xt} , \gamf_x \bigr\rangle \bert_{t=0}
= \langle \gam_x, \gam_x \rangle^{-1/2} \Re\langle \VV_x, \gam_x \rangle \nonumber\\[-3pt]
&=\langle \gam_x, \gam_x \rangle^{-1/2} \Re\langle \left((p_V+\ri q_V)_x + \ri r_V\right) \gam_x + (p_V +\ri q_V) \gam_{xx}, \gam_x\rangle \nonumber\\
&=\langle \gam_x, \gam_x \rangle^{-1/2}\Re\left( (p_V)_x \langle \gam_x, \gam_x \rangle + (p_V +\ri q_V) \langle \gam_{xx}, \gam_x\rangle \right).
\label{betadotfirst}
\end{align}
Because $\funA$ is invariant under period-preserving reparametrizations, we may assume that $\gam$ has constant speed $c=\langle \gam_x, \gam_x \rangle^{1/2}$.
Then $\langle \gam_{xx}, \gam_x\rangle = \ri c^3 \kk$ from \eqref{generalkform}, and integrating \eqref{betadotfirst} gives
\[
\rd \funA_{[\gamma]} (\VV) = \int_0^L \bigl(c (p_V)_x -c^2 \kk q_V\bigr)\dx=-\int_0^L c^2 \kk q_V \dx.
\]
Setting this equal to
\[
\Omega_{[\gamma]}(\VV, \VW_\funA)=-\int_0^L r_W (r_V)_x \, {\rm d}x=-\int_0^L 2 r_W \bigl(c^2q_V \bigr)\, {\rm d}x,
\] we get
$r_W=\tfrac12 \kk$. Hence,
\[
q_W = \frac{(r_W)_x}{2 \langle \gam_x, \gam_x\rangle} = \frac{\kk_x}{4 \langle \gam_x, \gam_x\rangle}.\tag*{\qed}
\]\renewcommand{\qed}{}
\end{proof}

\begin{Corollary}\label{nonstretch} Let $\gam \in \Leg^1$ be a unit-speed curve $($not necessarily periodic$)$, let $\gamf(x,t)$ be a~variation of $\gam$ through unit-speed curves, and let $\VV$ be as in \eqref{VVdef}. Then $p_s = \kk q$ and $r_s=2q$.
\end{Corollary}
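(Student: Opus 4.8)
The plan is to specialize the computation in the proof of Proposition \ref{hamlength} to the unit-speed setting, so no symplectic or Hamiltonian machinery is needed. First I would note that since $\gam$ and every curve $\gamf(\cdot,t)$ in the variation are unit-speed, the function $\langle \gamf_x,\gamf_x\rangle$ is identically $1$, so its $t$-derivative at $t=0$ vanishes. Differentiating $\langle \gamf_x,\gamf_x\rangle = 1$ gives $2\Re\langle \VV_x,\gam_x\rangle = 0$. Substituting the expression $\VV = p\gam_s + q(\ri\gam_s) + r(\ri\gam)$ and computing $\VV_x = \VV_s$ by the product rule, then using $\gam_{ss} = -\gam + \ri\kk\gam_s$ from \eqref{gamss} together with the orthonormality relations $\langle\gam,\gam\rangle = \langle\gam_s,\gam_s\rangle = 1$, $\langle\gam_s,\gam\rangle = 0$, the quantity $\Re\langle \VV_s,\gam_s\rangle$ collapses: the only surviving terms are $p_s$ (from differentiating the coefficient of $\gam_s$) and $-\kk q$ (from the $\ri\kk\gam_s$ term in $\gam_{ss}$, paired against $q(\ri\gam_s)$, after taking the real part). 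This yields $p_s - \kk q = 0$, i.e.\ $p_s = \kk q$, which is exactly the same relation appearing in \eqref{betadotfirst} once $c=1$ and the integral is dropped.

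For the second relation, I would recall that Lemma \ref{tanque} already records the constraint coming from preservation of the Legendrian condition \eqref{Lcond}: any variation vector field satisfies $r_x = 2q\langle\gam_x,\gam_x\rangle$. In the unit-speed case $\langle\gam_x,\gam_x\rangle = 1$, and $x = s$, so this is precisely $r_s = 2q$. Thus the second identity is immediate from the lemma, applied to a variation through (a fortiori) Legendrian curves; the unit-speed hypothesis is not even needed for $r_s=2q$ beyond ensuring $|\gam_x|^2=1$.

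I expect no real obstacle here — this is a routine bookkeeping argument. The only point requiring a little care is that in Lemma \ref{tanque} the variation is assumed to lie in $\Leg_L$ (periodic, fixed period), whereas here $\gam \in \Leg^1$ is merely unit-speed and not assumed periodic; but the proof of Lemma \ref{tanque} is purely local in $x$ and uses only that each $\gamf(\cdot,t)$ is Legendrian, so the relation $r_s = 2q$ carries over verbatim. Likewise one should confirm that differentiating the unit-speed constraint is legitimate, i.e.\ that $\gamf_x$ is smooth in $t$, which holds by the smoothness of the variation.
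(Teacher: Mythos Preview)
Your proposal is correct and follows essentially the same approach as the paper: specialize the computation from Proposition~\ref{hamlength} (equation \eqref{betadotfirst}) to the unit-speed case to get $p_s=\kk q$, and invoke Lemma~\ref{tanque} with $|\gam_x|^2=1$ for $r_s=2q$. Your remark that Lemma~\ref{tanque}'s periodicity hypothesis is inessential (the argument is pointwise in $x$) is a valid observation that the paper leaves implicit.
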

\begin{proof} If we set $\bigl\langle \gamf_x, \gamf_x\bigr\rangle=1$ identically and $x=s$ in \eqref{betadotfirst}, we obtain
\[0 = (p_V)_s+ \Re( (p_V +\ri q_V) \langle \gam_{ss}, \gam_s\rangle ).\]
Then substituting the expansion \eqref{gamss} for $\gam_{ss}$ gives $p_s = \kk q$, while $r_s=2q$ follows from Lemma~\ref{tanque}.\looseness=1
\end{proof}

We now consider the evolution of the curvature function $\kk$ induced
by vector fields in $T_\gamma \Leg$. (Again, for what follows it is not necessary to assume periodicity.)

\begin{Proposition}\label{kdeforms}
Let $\gam(x,t)$ be a family of regular Legendrian curves and let
\begin{equation}\label{myVVform}
\VV = \dfrac{\di \gam}{\di t} = p\gam_x + q (\ri\gam_x) + r (\ri \gam)
\end{equation}
for real-valued functions $p,q,r$ depending on $x$ and $t$. Let $\speed(x,t) = \langle \gam_x, \gam_x \rangle^{1/2}$ be the speed function.
Then the curvature $\kk$ satisfies the PDE
\begin{equation}\label{kevolvegen}
\dfrac{\di \kk}{\di t} = \dfrac{(\speed q_x)_x}{\speed^2}
+ \dfrac{q}{\speed}\biggl( \dfrac{\speed_x}{\speed}\biggr)_x
+ \kk_x p + \speed \bigl(\kk^2 + 4\bigr) q.
\end{equation}
In particular, if we have a family of unit-speed curves, then $x=s$ and
\begin{equation}\label{kevolves}
\kk_t = q_{ss} + (\kk p)_s + 4q.
\end{equation}
\end{Proposition}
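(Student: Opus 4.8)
The plan is to differentiate the Frenet-type structure equations with respect to $t$ and extract the evolution of $\kk$ from the resulting compatibility condition. Concretely, I would start from the expansion \eqref{gamxx} for $\gam_{xx}$ in the moving frame $\{\gam,\gam_x\}$, together with the defining relation $\VV = \gam_t = p\gam_x + q(\ri\gam_x) + r(\ri\gam)$ from \eqref{myVVform}. The curvature is recovered by \eqref{generalkform} as $\kk = \speed^{-3}\Im\langle\gam_{xx},\gam_x\rangle$, so the natural route is to compute $\di_t\langle\gam_{xx},\gam_x\rangle$ and $\di_t\speed$, and then assemble $\di_t\kk$ by the quotient rule.

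The key steps, in order: \textbf{(1)} Compute $\speed_t$. From $\speed^2 = \langle\gam_x,\gam_x\rangle$ we get $\speed\,\speed_t = \Re\langle\VV_x,\gam_x\rangle$, and expanding $\VV_x$ using \eqref{gamxx} to eliminate $\gam_{xx}$ gives $\speed_t$ in terms of $p,q,r$ and $\speed$; this parallels the computation in \eqref{betadotfirst} in the proof of Proposition~\ref{hamlength}. \textbf{(2)} Compute $\di_t(\gam_{xx}) = \di_x^2\VV$, again substituting \eqref{gamxx} (and its $x$-derivative) wherever $\gam_{xx}$ or $\gam_{xxx}$ appears, so that everything is written in the frame $\{\gam,\gam_x\}$ with coefficients built from $p,q,r,\speed,\kk$ and their $x$-derivatives. \textbf{(3)} Pair with $\gam_x$: compute $\di_t\langle\gam_{xx},\gam_x\rangle = \langle\di_t\gam_{xx},\gam_x\rangle + \langle\gam_{xx},\VV_x\rangle$, take the imaginary part, and combine with $\di_t\speed$ from step (1) via $\di_t\kk = \speed^{-3}\di_t\Im\langle\gam_{xx},\gam_x\rangle - 3\speed^{-4}\speed_t\,\Im\langle\gam_{xx},\gam_x\rangle$. \textbf{(4)} Simplify using $\Im\langle\gam_{xx},\gam_x\rangle = \speed^3\kk$ and the orthogonality relations $\langle\gam_x,\gam\rangle=0$, $\langle\gam_x,\gam_x\rangle=\speed^2$, $\langle\gam_{xx},\gam\rangle = -\speed^2$ (from differentiating \eqref{Lcond}); collect terms to obtain \eqref{kevolvegen}. \textbf{(5)} Specialize to $\speed\equiv 1$, $x=s$: the terms $(\speed q_x)_x/\speed^2$ and $(q/\speed)(\speed_x/\speed)_x$ collapse to $q_{ss}$ and $0$ respectively, $\kk_x p$ becomes $\kk_s p$, and $\speed(\kk^2+4)q$ — using Corollary~\ref{nonstretch}'s $p_s = \kk q$ to rewrite $\kk_s p + \kk^2 q = (\kk p)_s$ — yields $\kk_t = q_{ss} + (\kk p)_s + 4q$, which is \eqref{kevolves}.

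An alternative, perhaps cleaner, derivation of step (5) directly: work intrinsically with unit-speed curves from the start using the Frenet equation \eqref{FrenetEquation}. Write the $t$-derivative of the frame as $\Gamma_t = \Gamma A$ for some $\fsu(2)$-valued $A$ depending on $p,q,r$; the first column relation $\gam_t = \VV$ fixes the first column of $A$, and the compatibility $\di_t U = \di_s A + [A,U]$ (from $\Gamma_{st} = \Gamma_{ts}$) gives a matrix ODE whose $(2,2)$-entry is exactly $\ri\kk_t$. Corollary~\ref{nonstretch} ($p_s = \kk q$, $r_s = 2q$) is precisely what is needed to see that the off-diagonal compatibility conditions are consistent, and reading off the diagonal entry produces \eqref{kevolves}; the general-speed version then follows by the reparametrization substitution $\gam_s = \speed^{-1}\gam_x$ already recorded in the proof of the second Lemma.

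I expect the main obstacle to be bookkeeping in step (2)–(3): after two $x$-derivatives of $\VV$ one generates $\gam_{xxx}$, and substituting \eqref{gamxx} and $\di_x\eqref{gamxx}$ produces a proliferation of terms involving $\speed_x,\speed_{xx},\kk_x,q_x,q_{xx}$ etc., many of which must cancel or recombine into the compact form $(\speed q_x)_x/\speed^2 + (q/\speed)(\speed_x/\speed)_x$. Keeping the real and imaginary parts straight (only the imaginary part of the $\gam_x$-coefficient of $\gam_{xx}$ carries curvature information, while the real part carries speed information) is the delicate point; organizing the computation around the two scalar quantities $\Re\langle\gam_{xx},\gam_x\rangle = \speed\speed_x$ and $\Im\langle\gam_{xx},\gam_x\rangle = \speed^3\kk$ and differentiating each in $t$ separately should tame it. The unit-speed specialization and the matrix-ODE alternative are comparatively routine once \eqref{kevolvegen} is in hand.
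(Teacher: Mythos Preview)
Your proposal is correct and follows essentially the same route as the paper: differentiate $\kk=\speed^{-3}\Im\langle\gam_{xx},\gam_x\rangle$ in $t$, compute $\speed_t$ from $\Re\langle\VV_x,\gam_x\rangle$, expand $\VV_{xx}$ using \eqref{gamxx} and the inner product identities, then simplify and specialize to unit speed via Corollary~\ref{nonstretch}. One minor slip in your alternative route: since $\Gamma$ takes values in $\rU(2)$ rather than $\SU(2)$, the matrix $A$ in $\Gamma_t=\Gamma A$ is $\mathfrak{u}(2)$-valued, not $\fsu(2)$-valued.
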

\begin{proof}
From differentiating \eqref{generalkform}, we have
\begin{equation}\label{mykdotform}
\kk_t = \speed^{-3} \Im\left( \langle \VV_{xx}, \gam_x\rangle + \langle \gam_{xx}, \VV_x\rangle\right)
- 3\kk\speed^{-1} \speed_t.
\end{equation}
Recall from \eqref{gamxx} that $\gam_{xx} = - \speed^2 \gam + b \gam_x$ where $b:= \speed^{-1} \speed_x + \ri \speed \kk$.

Next, we differentiate the expansion \eqref{myVVform} twice, substitute into \eqref{mykdotform}, and use the
inner product formulas
\begin{alignat*}{3}
\begin{aligned}
&\langle \gam_{xx},\gam\rangle= -\speed^2, && \langle \gam_{xx}, \gam_{xx}\rangle= \speed^2 \bigl(\speed^2 + b \overline{b}\bigr),&\\
&\langle \gam_{xx}, \gam_x\rangle= \speed^2 b, \qquad && \langle \gam_{xxx}, \gam_x \rangle= \speed^2 \bigl(b_x + b^2 - \speed^2\bigr).&
\end{aligned}
\end{alignat*}
As well, from the proof of Proposition~\ref{hamlength}, we have
\begin{equation}\label{betadotsecond}
\speed_t = \speed^{-1}\Re \langle \VV_x, \gam_x\rangle = (\speed p)_x - \speed^2 k q.
\end{equation}
The expression for $\kk_t$ then follows by substituting these expressions into \eqref{mykdotform} and simplifying. Then \eqref{kevolves} follows by setting $\speed=1$ and using the relation $p_s = \kk q$ from Corollary
\ref{nonstretch}.
\end{proof}

Along a unit-speed curve $\gam \in \Leg^1_L$, the Hamiltonian vector field $\VW_{4\funA}$ from Proposition~\ref{hamlength} has a~representative
\[
\frac12\kk^2\gam_s+\kk_s(\ri \gam_s)+2\kk (\ri \gam),
\]
which satisfies the conditions from Corollary~\ref{nonstretch}, and is thus tangent to $\Leg^1_L$. By Proposition~\ref{kdeforms}, this vector field induces the curvature evolution
\[
\kk_t = \kk_{sss} + \frac12 \bigl(\kk^3\bigr)_s + 4 \kk_s,
\]
which differs by a Galilean transformation from the mKdV equation (with $s$ as spatial variable). In what follows, we will identify an infinite hierarchy
of Hamiltonian vector fields that induce evolution equations for curvature which belong to the mKdV hierarchy.

\subsection{The mKdV hierarchy}
In this subsection, we review the recursive construction of the mKdV hierarchy.
What follows is essentially the same as in Olver \cite{OlverLieGroups}, with some changes in notation; for example,
we use $u$ to denote a scalar function of spatial variable $s$ and time $t$, and let $u_0=u$ and $u_1, u_2, \ldots$
denote its successive $s$-derivatives.
In this notation, the mKdV takes the form
\begin{equation}\label{UmKdV}
u_t = u_3 + \frac32 u^2 u_1.
\end{equation}
On the space $J[u]$ of differential polynomials
in finitely many of the $u_j$, we will need the {\it total $s$-derivative} operator $\Dx$
and the {\it Euler operator} $\Euler$, defined respectively by
\[\Dx \uP = \sum_{j=0}^\infty \uu[j+1] \dfrac{\di \uP}{\di u_j},\qquad
\Euler \uP = \sum_{j=0}^\infty (-\Dx)^{j}\biggl( \dfrac{\di \uP}{\di u_j}\biggr), \qquad \uP \in J[u].
\]

The mKdV hierarchy is an infinite sequence of evolution equations
\[\uu_t = \uM_j, \qquad j\ge 1,\]
where $\uM_1=u_1$ and $\uM_2$ is the right-hand side of \eqref{UmKdV}. It can be
constructed recursively as follows.
There is an infinite sequence $\{\rho_j\}$ of differential polynomials in $\uu$ with $\rho_1 = \tfrac12 \uu^2$ and satisfying
\begin{equation}\label{rhocursion}
\opD \Euler \rho_{j+1} = \opE \Euler \rho_{j},
\end{equation}
where
\[\opD = \Dx, \qquad \opE = \Dx^3 + \Dx \uu \Dx^{-1} \uu \Dx.\]
Then $\uM_j = \opD \Euler \rho_j$ (e.g., $\uM_1 = \uu_1$).
Since $\opD$, $\opE$ are a {\em Hamiltonian pair} of skew-adjoint differential operators (in the sense of \cite[Definition~7.19]{OlverLieGroups}) each member of the mKdV hierarchy is a bi-Hamiltonian system; moreover, each $\rho_j$ is a~conserved density for each of these flows (see~\mbox{\cite[Theorem~7.24]{OlverLieGroups}}).

It follows from \eqref{rhocursion} that the $\uM_j$ satisfy the recurrence $\uM_{j+1} = \opR \uM_j$, where
\[\opR = \opE \circ \opD^{-1} = \Dx^2 + \Dx \uu \Dx^{-1} \uu,\]
while the densities satisfy the recurrence $\Euler \rho_{j+1} = \opR^* \Euler \rho_j$ for $\opR^* = \opD^{-1} \circ \opE$. For example,
\begin{alignat*}{5}
&\uM_1= u_1,\qquad\! && \uM_2= u_3 + \frac32 u^2 u_1,\qquad\! && \uM_3= u_{{5}}+\frac52 u_{{0}}^{2}u_{{3}}
+10\,u_{{0}}u_{{1}}u_{{2}}+\frac52 u_{{1}}^{3}+\frac{15}{8} u_{{1}} u_{{0}}^{4},\quad\! && \ldots & \\
&\rho_1= \frac12 u^2,\qquad\! && \rho_2= -\frac12 u_1^2 + \frac18 u^4,\qquad\! && \rho_3=
\frac12 u_{{2}}^{2}+\frac56 u_{{0}}^{3}u_{{2}}+\frac54 u_{{0}}^{2} u_{{1}
}^{2}+\frac1{16}u_{{0}}^{6},\quad\! && \ldots &
\end{alignat*}
It also follows from $\uM_{j+1} = \bigl(\Dx^2 + \Dx \uu \Dx^{-1} \uu\bigr) \uM_j$ that there are differential polynomials $\uN_j$ such that
$\uu \uM_j = \Dx \uN_j$.

Let $\uL_j = 2 \Euler\rho_j$. The connection between the ingredients of the mKdV hierarchy and deformations of Legendrian curves becomes apparent when we observe that if
\[
\VV = p \gam_s + q (\ri \gam_s) + r (\ri \gam) \in T_\gamma \Leg^1,
\]
then by Corollary~\ref{nonstretch}, $p$, $q$ and $r$ satisfy the same relationships as $N_j$, $M_j$ and $L_j$, respectively~-- provided we replace $\uu$ by $\kk$. Moreover, from \eqref{kevolves} the curvature of a unit-speed curve, under flow by $\VV$, evolves by
\[\kk_t = (\opR + 4)q,\]
where again we replace $u$ by $\kk$ in the expression for $\opR$.
Matters being so, we define the following {\it mKdV-type vector fields}
\[\vecV_j = \uN_j \gam_s + \uM_j (\ri \gam_s) + \uL_j (\ri \gam), \qquad j \ge 1.\]
(From now on, we will take $\uL_j$, $\uM_j$, $\uN_j$, as well as the operators for the mKdV hierarchy, as having~$u$ replaced by $\kk$.)
As noted, these are tangent to the submanifold $\Leg^1$, and it is easy to see that on unit-speed curves they induce curvature evolutions by linear combinations of members of the mKdV hierarchy (see below). However, these vector fields are defined on the larger space $\Leg$, and in the periodic case we will explore their properties in relation to the symplectic structure.

For example, when we restrict our attention to the space $\Leg_L$ of periodic curves, the vector field
\[
\vecV_1 = \frac12 \kk^2 \gam_s + \kk_s (\ri \gam_s) + 2 \kk (\ri \gam)
= \dfrac{\kk^2}{2\langle \gam_x, \gam_x \rangle^{1/2}} \gam_x + \dfrac{\kk_x}{\langle \gam_x, \gam_x \rangle} \ri \gam_x + 2\kk (\ri \gam)
\]
is equivalent to the Hamiltonian vector field from Proposition~\ref{hamlength}. Similarly, we will show below
that the rest of the vector fields $\vecV_j$ represent Hamiltonian vector fields on $\Legq_L$.

\begin{Proposition} Let $\qV_j$ be the vector field on $\Legq_L$ represented by $\vecV_j$. Then $\Omega(\qV_m ,\qV_j) =0$ for $j=1,\ldots, m-1$.
\end{Proposition}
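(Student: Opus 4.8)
The plan is to reduce the claim $\Omega(\qV_m,\qV_j)=0$ for $j<m$ to an identity involving the recursion operators of the mKdV hierarchy, and then to invoke the bi-Hamiltonian structure already recalled in the excerpt. By the formula derived above, $\Omega_{[\gamma]}(\qV_m,\qV_j) = \int_0^L \uM_m \, (\uM_j)_s\,\dx$, where $\uM_k$ is the $(\ri\gam_s)$-component of $\vecV_k$ (equivalently, $\uM_k = \opD\Euler\rho_k$ with $u$ replaced by $\kk$). So everything comes down to showing $\int_0^L \uM_m\,\Dx\uM_j\,\dx = 0$ for $1\le j < m$, where the integral is over a period of $\kk$.

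First I would rewrite $\Dx\uM_j = \Dx\opR^{\,j-1}\uM_1 = \Dx\opR^{\,j-1}\kk_s$. Since $\opR = \opD^{-1}\opE$ and $\uM_k = \opD\Euler\rho_k$, the integrand becomes a pairing of $\Euler\rho_m$ against $\opE\Euler\rho_j$ (after an integration by parts moving one $\opD$), i.e.\ $\int_0^L \Euler\rho_m \cdot \opE\Euler\rho_j\,\dx$ up to sign. Now the recursion \eqref{rhocursion} says $\opE\Euler\rho_j = \opD\Euler\rho_{j+1} = \Dx\Euler\rho_{j+1}$, so this integral is $\pm\int_0^L \Euler\rho_m \cdot \Dx\Euler\rho_{j+1}\,\dx$. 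The key structural fact I would use is skew-adjointness of $\opD = \Dx$ together with the observation that $\int_0^L \Euler\rho_m \cdot \opD\Euler\rho_{j+1}\,\dx$ is, up to sign, the Poisson bracket $\{\int\rho_{j+1},\int\rho_m\}_{\opD}$ of two conserved densities of the hierarchy with respect to the first Hamiltonian structure $\opD$. Since each $\rho_k$ is a conserved density for every flow in the hierarchy (this is \cite[Theorem~7.24]{OlverLieGroups}, quoted in the excerpt), these conserved quantities are in involution with respect to both Hamiltonian structures; in particular $\{\int\rho_{j+1},\int\rho_m\}_{\opD}=0$, which is exactly the vanishing we need. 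One has to be slightly careful about which Poisson bracket and which ordering of indices, but the pattern $\opE\Euler\rho_j = \opD\Euler\rho_{j+1}$ lets one slide freely between the $\opD$- and $\opE$-brackets, and involutivity with respect to both yields the result for all $j<m$ (and in fact symmetry would give $j>m$ as well, but only $j<m$ is claimed here, presumably because it is what is needed to run a Lenard–Magri-type argument later).

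An alternative, more self-contained route avoids citing involutivity and instead exploits the fact, noted just before the Proposition, that $\kk\,\uM_j = \Dx\uN_j$ for differential polynomials $\uN_j$: one can try to show directly that the integrand $\uM_m\,\Dx\uM_j$ is a total $s$-derivative of a differential polynomial in $\kk$, by a telescoping/induction on $m-j$ using $\uM_{m} = \opR\uM_{m-1} = \uM_{m-1,ss} + \kk_s\,\Dx^{-1}(\kk\,\uM_{m-1})$ together with repeated integration by parts over the period. The $\Dx^{-1}$ is well-defined here precisely because $\kk\,\uM_{m-1}$ integrates to zero over a period (it equals $\Dx\uN_{m-1}$), which keeps everything inside the class of periodic differential polynomials.

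The main obstacle is bookkeeping: making the formal integrations by parts rigorous on the space of periodic functions (ensuring no boundary terms, and that each $\Dx^{-1}$ that appears is applied only to something with zero mean so the result is again periodic), and pinning down the exact signs and index shifts so that the expression genuinely matches a bracket of two of the conserved densities rather than, say, a bracket of a density with a flux. Once the algebra is lined up correctly, the vanishing is immediate from the bi-Hamiltonian machinery already in place, so I expect the proof to be short modulo this careful translation between the geometric pairing $\Omega$ and the standard mKdV formalism.
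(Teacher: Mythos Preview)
Your overall strategy---reduce $\Omega(\qV_m,\qV_j)$ to a Poisson bracket of conserved densities and invoke involutivity via \cite[Lemma~7.25]{OlverLieGroups}---is exactly the paper's approach. However, your first reduction step is wrong: the symplectic form is computed from the $r$-components (the coefficients of $\ri\gam$), not the $q$-components. From the formula $\Omega_{[\gamma]}(\qV,\qW)=\int_0^L r_V (r_W)_x\,\dx$ and the definition $\vecV_j = \uN_j\gam_s + \uM_j(\ri\gam_s) + \uL_j(\ri\gam)$, one has $\Omega(\qV_m,\qV_j)=\int_0^L \uL_m(\uL_j)_x\,\dx$, not $\int_0^L \uM_m(\uM_j)_s\,\dx$ as you wrote.

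Once this is corrected the argument becomes shorter than what you sketch: since $\uL_k = 2\Euler\rho_k$, after passing to arclength (which the paper justifies by reparametrization invariance of the integral) one has $\Omega(\qV_m,\qV_j) = 4\int_0^L (\Euler\rho_m)\,\opD(\Euler\rho_j)\,\ds$, which is a constant multiple of $\{\int\rho_j,\int\rho_m\}_{\opD}$ directly, with no need to bring in $\opE$, the recursion \eqref{rhocursion}, or an index shift to $\rho_{j+1}$. The detour through $\uM_m$, $\opR$, and $\opE\Euler\rho_j$ in your argument stems from the misidentification and is unnecessary. Your alternative route via $\kk\uM_j = \Dx\uN_j$ is likewise aimed at the wrong integrand.
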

\begin{proof} Let $\gam\in \Leg_L$. By definition of $\Omega_{[\gamma]}$, the statement is equivalent to $\int_0^L \uL_m (\uL_j)_x \,{\rm d}x=0$.
Because this equation is invariant under arbitrary reparametrizations, it
suffices to verify this when $\gam\in \Leg^1_L$ and $x=s$, an arclength parameter. The rest of the argument is essentially
the same as the last step in the proof of \cite[Lemma~7.25]{OlverLieGroups}.
\end{proof}

To show that the $\qV_j$ are Hamiltonian, we need to compute the variations of reparametrization-invariant integrals involving the curvature and its derivatives.

\begin{Lemma}\label{grunge} Let $\gam \in \Leg_L$, let $\gamf(\, \cdot\, ,t)\in \Leg_L$ be a smooth variation of $\gam$, and let $\VV$ be as
in~\eqref{VVdef}, but with the subscript~$V$ omitted from the components~$p$,~$q$,~$r$. Let $\kk_j$ denote the $j$th derivative of curvature with respect to arclength and let $\speed = \langle \gam_x, \gam_x \rangle^{1/2}$. Then for the functional
\[
\funH(\gam) = \int_0^L f(\kk, \kk_1, \ldots, \kk_n) \speed \dx,
\]
we have
\[
\dfrac{{\rm d}}{{\rm d}t}\bigg|_{t=0} \funH(\gamf)
=\int_{0}^{L} (\Euler f)\biggl( \dfrac{(\speed q_x)_x}{\speed}
+ q\biggl( \dfrac{\speed_x}{\speed}\biggr)_x
+ \speed^2(\kk^2 + 4) q\biggr) - (\Fop f) \speed^2 k q \dx,
\]
where
\[
\Euler f = \sum_{m=0}^n \biggl(- \dfrac{\di}{\di s}\biggr)^m \dfrac{\partial f}{\partial \kk_m}, \qquad
\Fop f = f - \sum_{m=0}^n \sum_{j=0}^{m-1}\kk_{m-j}\biggl(- \dfrac{\di}{\di s}\biggr)^j \dfrac{\partial f}{\partial \kk_m}.
\]
\end{Lemma}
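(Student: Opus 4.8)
The plan is to differentiate $\funH(\gamf)$ under the integral sign and reorganize the resulting expression into the two pieces advertised in the statement: one multiplying $\Euler f$ and one multiplying $\Fop f$. The starting point is the product rule
\[
\dfrac{{\rm d}}{{\rm d}t}\bigg|_{t=0}\funH(\gamf) = \int_0^L \left( \sum_{m=0}^n \dfrac{\partial f}{\partial \kk_m}\,\dfrac{\di}{\di t}\bigg|_{t=0}\kk_m + f\,\dfrac{\di}{\di t}\bigg|_{t=0}\speed\right)\dx,
\]
so the two ingredients I need are the variation of $\speed$, already recorded in \eqref{betadotsecond} as $\speed_t = (\speed p)_x - \speed^2\kk q$, and the variation of each arclength-derivative $\kk_m = (\di/\di s)^m\kk$. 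For $m=0$ this is \eqref{kevolvegen}; for higher $m$ one must commute $\di/\di t$ past $\di/\di s = \speed^{-1}\di/\di x$, which introduces correction terms because $\speed$ itself depends on $t$. Concretely, I would establish the commutator identity $[\di/\di t,\di/\di s] = -(\speed_t/\speed)\,\di/\di s$ acting on functions, so that $\di_t\kk_m = \di_s(\di_t\kk_{m-1}) - (\speed_t/\speed)\,\kk_m$ and hence inductively
\[
\di_t\kk_m = \di_s^m(\di_t\kk) - \sum_{j=0}^{m-1}\di_s^j\!\left(\tfrac{\speed_t}{\speed}\,\kk_{m-j}\right)\!,
\]
with $\di_t\kk$ given by \eqref{kevolvegen}.

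Substituting these into the product rule, the term $\sum_m (\partial f/\partial\kk_m)\,\di_s^m(\di_t\kk)$ is handled by integrating by parts $m$ times in the $x$-variable; since $\di_s^m = \speed^{-m}(\speed^{-1}\di_x\cdots)$ and we integrate $\di_s(g)\,\speed\,\dx = \di_x(g)\,\dx$, the boundary terms vanish by periodicity and we collect exactly $(\Euler f)\cdot(\di_t\kk)\,\dx$ with $\Euler f$ as written. Then I feed in \eqref{kevolvegen} for $\di_t\kk$: the $\kk_x p$ term there combines with the other $p$-dependent contributions, while the $(\speed q_x)_x/\speed^2$, $(q/\speed)(\speed_x/\speed)_x$, and $\speed(\kk^2+4)q$ terms, multiplied through by the extra $\speed$ from the measure, give precisely the coefficient of $\Euler f$ displayed in the Lemma. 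The remaining work is to show that \emph{all} the $p$-dependent pieces — those from $\kk_x p$ in \eqref{kevolvegen}, from $\speed_t = (\speed p)_x - \speed^2\kk q$ feeding into $\di_t\speed$, and from the $(\speed_t/\speed)\kk_{m-j}$ correction terms in $\di_t\kk_m$ — conspire, after integration by parts, to leave only the $-(\Fop f)\speed^2\kk q\,\dx$ term; in particular every genuine $p$-contribution must cancel, as it should by reparametrization invariance of $\funH$.

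The main obstacle will be the bookkeeping in that last cancellation: one must carefully track how the double sum $\sum_m\sum_{j=0}^{m-1}\kk_{m-j}\di_s^j(\partial f/\partial\kk_m)$ — which is exactly $f - \Fop f$ up to the leading $f$ — emerges from integrating by parts the correction terms $-\di_s^j((\speed_t/\speed)\kk_{m-j})$ against $\partial f/\partial\kk_m$, and then to combine the $(\speed p)_x/\speed$ half of $\speed_t/\speed$ with the $f\,\di_t\speed$ contribution so that the $p$-terms telescope away. A clean way to organize this is to split $\speed_t/\speed = p_s + p\,\speed_s/\speed - \speed\kk q$ (using $\speed_t = (\speed p)_x - \speed^2\kk q$ and $\di_s = \speed^{-1}\di_x$); the $-\speed\kk q$ part is what survives into $\Fop f$, the $p_s$ part integrates by parts to reproduce $f$ against the boundary-free total derivative and cancels against the $p$-term from \eqref{kevolvegen}, and the $p\,\speed_s/\speed$ part similarly cancels once one uses that $f$ appears multiplied by $\speed$ in the measure. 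Once the $p$-dependence is shown to drop out entirely, setting the coefficient of $q$ equal in the two forms yields the stated identity; the unit-speed specialization is then immediate by putting $\speed\equiv 1$, $x=s$, which is how the Lemma will actually be applied to the mKdV-type vector fields $\vecV_j$.
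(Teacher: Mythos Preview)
Your outline is essentially the paper's proof: same product-rule start, same commutator $[\di_t,\di_s]=-(\speed_t/\speed)\di_s$, same inductive formula for $\dot\kk_m$, same integration by parts in $s$ (using $\speed\,\dx=\ds$) to produce $\Euler f$ and $\Fop f$. The only real divergence is in the last step, the $p$-cancellation. The paper does not split $\speed_t/\speed$ and chase terms; instead it substitutes $\dot\kk$ from \eqref{kevolvegen} and $\dot\speed$ from \eqref{betadotsecond} directly, arriving at
\[
\int_0^L (\Euler f)\bigl(\cdots + \speed\kk_x p\bigr) + (\Fop f)\bigl((\speed p)_x - \speed^2\kk q\bigr)\,\dx,
\]
and then invokes the single identity $(\Euler f)\,\kk_x = (\Fop f)_x$ (equation \eqref{Eulerfident}), which turns the two $p$-terms into the exact derivative $((\Fop f)\speed p)_x$. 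That one-line observation replaces all the telescoping you sketch.

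Your hands-on route would also work, but note your split $\speed_t/\speed = p_s + p\,\speed_s/\speed - \speed\kk q$ is off by a factor of $\speed$: from $\speed_t=(\speed p)_x-\speed^2\kk q$ and $\di_x=\speed\,\di_s$ one gets $\speed_t/\speed = \speed p_s + p\,\speed_s - \speed\kk q$. With the corrected split the bookkeeping goes through, but the identity \eqref{Eulerfident} is the cleaner device and worth isolating, since it is reused in the proof of the next Proposition.
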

\begin{proof}
By straightforward differentiation,
\begin{equation}\label{varbf}
\dfrac{{\rm d}}{{\rm d}t}\bigg|_{t=0} \funH(\gamf)
=\int_0^L \Bigg(\sum_{m=0}^n \dfrac{\di f}{\di \kk_m} \dot\kk_m\Bigg) \speed + f \dot{\speed} \dx,
\end{equation}
where the dot indicates $\di/\di t$.
We wish to express $\dot\kk_m$ in terms of $\dot\kk$. For this purpose, note that
\[\left[\frac{\di}{\di t}, \frac{\di}{\di s}\right] = \left[\frac{\di}{\di t}, \speed^{-1} \frac{\di}{\di x}\right] =
-\speed^{-2} \dot\speed \frac{\di }{\di x} = -\speed^{-1} \dot\speed \frac{\di }{\di s}.\]
Applying this to $\kk_j$, we obtain
\[\dot\kk_{j+1} = \frac{\di}{\di s} \dot\kk_j -\speed^{-1} \dot\speed \kk_{j+1}.\]
By iterating this formula, it follows that
\[\dot\kk_m = \left(\frac{\di}{\di s}\right)^m \dot \kk - \sum_{j=0}^{m-1} \left(\tfrac{\di}{\di s}\right)^j \bigl( \speed^{-1} \dot\speed \kk_{m-j}\bigr).\]
We substitute this in \eqref{varbf}, use ${\rm d}x=\speed^{-1} {\rm d}s$ to apply integration by parts with respect to $s$, and substitute expressions for $\dot{k}$ from~\eqref{kevolvegen} and for $\dot{\beta}$ from~\eqref{betadotsecond}, giving
\begin{align*}
{\rm d}\funH[\VV] &= \int_{x=0}^{x=L} \left[\sum_{m=0}^n \dfrac{\di f}{\di \kk_m}
\left(\left(\frac{\di}{\di s}\right)^m \dot \kk - \sum_{j=0}^{m-1} \left(\frac{\di}{\di s}\right)^j \bigl( \speed^{-1} \dot\speed \kk_{m-j}\bigr)\right)\right] + f \speed^{-1} \dot\speed \ds \\
&= \int_{x=0}^{x=L} \left[ \sum_{m=0}^n \left( \left(-\frac{\di}{\di s}\right)^m \dfrac{\di f}{\di \kk_m} \right) \right] \dot \kk \\
&\hphantom{=} + \left[ f - \sum_{m=0}^n \sum_{j=0}^{m-1} \kk_{m-j} \left(\left(-\frac{\di}{\di s}\right)^j \dfrac{\di f}{\di \kk_m}\right) \right] \speed^{-1} \dot\speed \ds
\\
&= \int_{0}^{L} (\Euler f)\biggl( \dfrac{(\speed q_x)_x}{\speed}
+ q\left( \dfrac{\speed_x}{\speed}\right)_x
+ \speed \kk_x p + \speed^2\bigl(\kk^2 + 4\bigr) q\biggr) + (\Fop f) \bigl( (\speed p)_x - \speed^2 k q\bigr) \dx.
\end{align*}
It is easy to check that
\begin{equation}\label{Eulerfident}
(\Euler f) k_x = (\Fop f)_x,
\end{equation}
and thus the terms in ${\rm d}\funH[\VV]$ involving $p$ make up an exact $x$-derivative of a periodic function, whose integral is zero. This gives the desired expression.
\end{proof}

\begin{Proposition} Each mKdV vector field $\vecV_j$ represents a Hamiltonian vector field on $\Legq_L$.
\end{Proposition}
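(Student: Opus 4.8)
The plan is to exhibit explicit Hamiltonians, built from the total arclength and from the conserved densities $\rho_i$ of the mKdV hierarchy (with $u$ replaced by $\kk$). Concretely, I will show that $\vecV_j$ represents the Hamiltonian vector field of
\[
\funH_j \;=\; \sum_{i=1}^{j-1}(-4)^{\,j-i}\!\int_0^L \rho_i\,\speed\dx \;-\;(-4)^j\,\funA ,
\]
equivalently of the functionals defined recursively by $\funH_1=4\funA$ and $\funH_j=-4\funH_{j-1}-4\int_0^L\rho_{j-1}\,\speed\dx$ for $j\ge2$. Each $\int_0^L\rho_i\,\speed\dx$ is a reparametrization- and $\VH$-invariant functional of the type treated in Lemma~\ref{grunge}, so these are genuine functionals on $\Legq_L$, and the base case $\VW_{4\funA}\equiv\vecV_1$ is exactly Proposition~\ref{hamlength}. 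Since all functionals and forms in play are invariant under period-preserving reparametrization, it suffices to verify $\rd\funH_j=\Omega(\,\cdot\,,\vecV_j)$ on the canonical representative $\VV_0$ of a tangent vector (the one with vanishing $\gam_x$-component, so its $\ri\gam_x$- and $\ri\gam$-components are $\tfrac12 r_s$ and $r$ for an arbitrary mean-zero $L$-periodic $r$) along a curve parametrized proportionally to arclength; the condition then reduces to an identity among differential polynomials in $\kk$ and its arclength derivatives.

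Two observations cut the problem down to a single such identity. First, by Lemma~\ref{tanque} a vector field in $T_\gam\Leg_L$ represents the class of $\vecV_j$ precisely when its $\ri\gam$-component equals $\uL_j=2\,\Euler\rho_j$ up to an additive constant (the constant being absorbed by an $\VH$-shift, and the $\gam_x$-component by a reparametrization). Second, combining Lemma~\ref{grunge} (after an integration by parts in the $q_{ss}$-term) with $r_x=2q\langle\gam_x,\gam_x\rangle$ and $\Omega_{[\gam]}(\qV,\qW)=\int_0^L r_V(r_W)_s\ds=-2\int_0^L q_V r_W\ds$, the Hamiltonian vector field of $\int_0^L f(\kk,\kk_1,\dots)\,\speed\dx$ has $\ri\gam$-component
\[
r_W \;=\; -\tfrac12\bigl[(\Euler f)_{ss} + (\kk^2+4)\,\Euler f - \kk\,\Fop f\bigr].
\]
Applying this with $f=\rho_i$, the assertion $\VW_{\int\rho_i\,\speed\dx}\equiv -\tfrac14\vecV_{i+1}-\vecV_i$ (whose $\ri\gam$-component is $-\tfrac12(\Euler\rho_{i+1}+4\,\Euler\rho_i)$) becomes equivalent to the identity
\[
(\Euler\rho_i)_{ss} + \kk^2\,\Euler\rho_i - \kk\,\Fop\rho_i \;=\; \Euler\rho_{i+1}.
\]

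The heart of the matter is this identity, and it is where the recursive structure of the hierarchy is used. From $\Euler\rho_{i+1}=\opR^{*}\Euler\rho_i$ with $\opR^{*}=\opD^{-1}\opE=\Dx^2+\kk\,\Dx^{-1}\kk\,\Dx$ we have $\Euler\rho_{i+1}=(\Euler\rho_i)_{ss}+\kk\,\Dx^{-1}\!\bigl(\kk(\Euler\rho_i)_s\bigr)$, so it is enough to show that $\kk\,\Euler\rho_i-\Fop\rho_i$ is an antiderivative of $\kk(\Euler\rho_i)_s=\kk\,\uM_i$ — indeed one that can serve as the differential polynomial $\uN_i$ with $\kk\,\uM_i=\Dx\uN_i$. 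Differentiating, $\Dx(\kk\,\Euler\rho_i-\Fop\rho_i)=\kk_s\,\Euler\rho_i+\kk(\Euler\rho_i)_s-(\Fop\rho_i)_s$, and the first and last terms cancel by identity~\eqref{Eulerfident} applied to $f=\rho_i$, leaving exactly $\kk(\Euler\rho_i)_s$. Taking $\uN_i:=\kk\,\Euler\rho_i-\Fop\rho_i$ (which differs from any other admissible choice by a constant, hence by a reparametrization, so is immaterial in $\Legq_L$) gives $\Euler\rho_{i+1}=(\Euler\rho_i)_{ss}+\kk\,\uN_i$, which rearranges to the displayed identity. With $\VW_{\int\rho_i\,\speed\dx}\equiv -\tfrac14\vecV_{i+1}-\vecV_i$ in hand, linearity of the Hamilton correspondence together with the recursion for $\funH_j$ yields, by induction on $j$,
\[
\VW_{\funH_j}\;=\;-4\,\VW_{\funH_{j-1}}-4\,\VW_{\int\rho_{j-1}\speed\dx}\;\equiv\;-4\vecV_{j-1}-4\bigl(-\tfrac14\vecV_j-\vecV_{j-1}\bigr)\;=\;\vecV_j ,
\]
(equivalently, the closed form for $\funH_j$ telescopes directly). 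I expect the only delicate point to be bookkeeping the constants of integration hidden inside $\Dx^{-1}$ — both in $\opR^{*}$ and in the definition of $\uN_i$ — and verifying that they are always the harmless, trivial-in-$\Legq_L$ kind; this is precisely the place where passing to the quotient by the reparametrization and $\VH$ actions pays off.
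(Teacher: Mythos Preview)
Your proposal is correct and follows essentially the same route as the paper's proof: both use Lemma~\ref{grunge} (plus an integration by parts) to rewrite $\rd\!\left(\int_0^L f\,\speed\dx\right)[\VV]$ as $\tfrac12\int r_x(\opR^{*}+4)\Euler f\dx$, invoke identity~\eqref{Eulerfident} to recognize the recursion operator, and then combine the resulting relation $\VW_{\int\rho_i\speed\,\rd x}\equiv -\tfrac14\vecV_{i+1}-\vecV_i$ with the base case $\VW_{4\funA}\equiv\vecV_1$ from Proposition~\ref{hamlength}. The only cosmetic difference is packaging: you present the telescoping as an induction with the explicit recursive Hamiltonians $\funH_j=-4\funH_{j-1}-4\int\rho_{j-1}\speed\dx$, whereas the paper sets $f=\sum_{k=1}^{j}(-4)^{j-k}\rho_k$ at once and telescopes the $\uL$'s directly.
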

\begin{proof}
Let $\gam \in \Leg_L$, and let functional $\funH$ and vector field $\VV$ be as in Lemma \ref{grunge}. Because of reparametrization invariance of $\funH$, we can assume that $\gam$ is parametrized with constant speed~$\speed$. Then, using $r_x= 2\speed^2 q$, we have
\[{\rm d}\funH[\VV] = \int_0^L (\Euler f) \bigl( q_{xx} + \tfrac12 r_x \bigl(\kk^2 + 4\bigr)\bigr) - \tfrac12 (\Fop f)k r_x \dx.\]
Applying integration by parts to the second derivative term gives
\[{\rm d}\funH[\VV] =\frac12 \int_0^L \bigl( \beta^{-2} (\Euler f)_{xx} + \bigl(\kk^2+4\bigr) \Euler f - \kk \Fop f\bigr) r_x \dx.\]
Since $(\kk \Euler f -\Fop f)_s = \kk (\Euler f)_s$ from \eqref{Eulerfident}, we can express this in terms of an mKdV recursion operator applied to $\Euler f$:
\[
{\rm d}\funH[\VV] = \frac12 \int_0^L r_x \bigl(\delta^2 + \kk \delta^{-1} \kk \delta + 4\bigr) \Euler f \dx = \frac12 \int_0^L r_x (\opR^* + 4) \Euler f \dx,
\]
where $\delta$ denotes the total $s$-derivative.

Recall that the third `$r$' component of $\VV_j$ is $\uL_j$. Since $\uL_j = 2\Euler \rho_j$ and $\opR^*$ is the recursion operator for the $\uL_j$, then
\[\uL_{j+1} + 4 \uL_j = 2(\opR^* + 4)\Euler \rho_j, \qquad j\ge 1.\]
Using this inductively, we have
\[\uL_{j+1} - (-4)^j \uL_1 = \sum_{k=1}^j (-4)^{j-k} (\uL_{k+1} + 4 \uL_k) = 2 (\opR^* + 4) \sum_{k=1}^j (-4)^{j-k} \Euler \rho_k.\]
In particular, if we set $f = \sum_{k=1}^j (-4)^{j-k}\rho_k$, we see that
\[{\rm d}\funH[\VV] = \Omega\bigl(\vecV_{j+1} -(-4)^j \vecV_1, \VV\bigr).\]
Since it has already been shown that $\vecV_1$ is Hamiltonian, it follows that $\vecV_{n}$ is Hamiltonian for any $n\ge 1$.
\end{proof}

From \eqref{kevolves}, when restricted to unit-speed curves, the Hamiltonian vector field $\vecV_j$ induces the curvature evolution
\begin{equation}\label{kMstep}
\kk_t = (\opR + 4) \uM_j = \uM_{j+1} + 4 \uM_j
\end{equation}
for $j\ge 1$. For the sake of convenience, we define $\vecV_0 = \gam_x$, which induces the translation flow $k_t = k_s$.
If we define
\[
\VZ_n = \sum_{j=0}^n (-4)^{n-j} \vecV_j,
\]
then $\VZ_n$ induces the curvature evolution
\[
\kk_t = (-4)^n \uM_1 + (\opR+4) \Biggl( \sum_{j=1}^n (-4)^{n-j} \uM_j \Biggr),
\]
since $\uM_1 = \kk_s$. Using \eqref{kMstep} and telescoping, this simplifies to
\[\kk_t = (-4)^n \uM_1 +\sum_{j=1}^n (-4)^{n-j} (\uM_{j+1} + 4 \uM_j) = \uM_{n+1}.\]
Thus, $\VZ_n$ induces the $(n+1)$st mKdV equation for curvature.

For later use, we now derive the evolution of the Frenet frame under flow $\VZ_1$.

\begin{Lemma}\label{FrenetEvolution} Let $\Gamf(s,t)$ be the ${\rm U}(2)$-valued Frenet frame for a smooth family of curves $\gamf(s,t)$ parametrized
by arclength $s$, such that
\[\dfrac{\di \gamf}{\di t} = \VZ_1 = \biggl(\frac12 \kh^2 -4\biggr) \gamf_s + \kh_s (\ri \gam_s) + 2\kh (\ri \gam),\]
where $\kh(s,t)$ is the curvature. Then $\Gamf$ satisfies $\dfrac{\di \Gamf}{\di t} = \Gamf P$, where
\begin{equation}\label{Z1flowPform}\renewcommand{\arraystretch}{1.2}
P = \begin{pmatrix} 2\ri \kh & 4 - \tfrac12 \kh^2 +\ri \kh_s \\ \tfrac12 \kh^2 -4 + \ri \kh_s & \ri \bigl(\kh_{ss} + \tfrac12\kh^3 -2\kh\bigr) \end{pmatrix}.
\end{equation}
\end{Lemma}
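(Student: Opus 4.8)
The plan is to compute $P = \Gamf^{-1}\Gamf_t$ directly from the two columns of $\Gamf = (\gamf\ \gamf_s)$. First I would note that, because $\gamf(\cdot,t)$ is arclength-parametrized, $\Gamf$ is $\rU(2)$-valued, so differentiating $\Gamf^{*}\Gamf = I$ in $t$ shows $P = \Gamf^{*}\Gamf_t$ is automatically $\mathfrak{u}(2)$-valued (skew-Hermitian); moreover, its columns are exactly the expansions of $\gamf_t$ and of $\gamf_{st}$ in the orthonormal frame $\{\gamf,\gamf_s\}$. Before that, I would check that the flow by $\VZ_1$ actually preserves the unit-speed parametrization, so that the hypothesis ``parametrized by arclength $s$'' is consistent along the family: writing $\VZ_1 = p\gamf_s + q(\ri\gamf_s) + r(\ri\gamf)$ with $p = \tfrac12\kh^2 - 4$, $q = \kh_s$, $r = 2\kh$, Corollary~\ref{nonstretch} gives $p_s = \kh q$, whence $\speed_t = p_s - \kh q = 0$ from \eqref{betadotsecond}. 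Thus the substantive task is just to identify the two columns of $P$.

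For the first column I would simply regroup the given formula for $\VZ_1$, collecting the terms along $\gamf$ and along $\gamf_s$:
\[
\gamf_t = \VZ_1 = \Bigl(\tfrac12\kh^2 - 4 + \ri\kh_s\Bigr)\gamf_s + 2\ri\kh\,\gamf,
\]
which reads off the first column of $P$ in \eqref{Z1flowPform} immediately. Skew-Hermiticity then already forces the $(1,2)$ entry to equal $-\overline{\bigl(\tfrac12\kh^2 - 4 + \ri\kh_s\bigr)} = 4 - \tfrac12\kh^2 + \ri\kh_s$, matching \eqref{Z1flowPform}; this will also reappear below as a consistency check.

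For the second column I would differentiate $\gamf_t = \VZ_1$ with respect to $s$ and use the Frenet relation $\gamf_{ss} = -\gamf + \ri\kh\,\gamf_s$ from \eqref{gamss} to eliminate the second derivative. Abbreviating $A = 2\ri\kh$ and $B = \tfrac12\kh^2 - 4 + \ri\kh_s$, this yields
\[
\gamf_{st} = A_s\gamf + (A + B_s)\gamf_s + B\gamf_{ss} = (A_s - B)\gamf + (A + B_s + \ri\kh B)\gamf_s,
\]
and a one-line simplification gives $A_s - B = 4 - \tfrac12\kh^2 + \ri\kh_s$ (consistent with the $(1,2)$ entry found above) and $A + B_s + \ri\kh B = \ri\bigl(\kh_{ss} + \tfrac12\kh^3 - 2\kh\bigr)$, which are precisely the entries of the second column of $P$ in \eqref{Z1flowPform}. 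I do not anticipate a genuine obstacle: the only work is one careful $s$-differentiation together with the attendant algebra, and the automatic skew-Hermiticity of $P$ both halves that work and supplies a built-in check; the single point worth being careful about is verifying, as above, that the arclength parametrization is preserved along the flow.
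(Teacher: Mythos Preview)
Your argument is correct and complete. The paper's proof follows the same outline for the first column and for the $(1,2)$ entry via skew-Hermiticity, but differs in how it pins down the $(2,2)$ entry: rather than differentiating $\gamf_t$ in $s$ directly as you do, the paper invokes the compatibility condition $U_t - P_s - [U,P] = 0$ obtained by equating mixed partials of $\Gamf$ (using $\Gamf_s = \Gamf U$ and $\Gamf_t = \Gamf P$). Your column-by-column computation is more elementary and self-contained, and has the advantage of producing both second-column entries at once together with a built-in consistency check on the $(1,2)$ entry; the paper's zero-curvature formulation, on the other hand, makes the Lax-pair structure explicit, which is reused later in the analysis of stationary curves and the conserved momentum. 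Your preliminary verification that $\VZ_1$ preserves the arclength parametrization is a nice addition not present in the paper's proof, which simply takes that as part of the hypothesis.
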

\begin{proof} Since $\gamf$ is the first column of $\Gamf$, then entries of the first column of $P$ are dictated by the flow of $\gamf$.
Since $\Gamf$ is ${\rm U}(2)$-valued then $P$ takes value in ${\mathfrak u}(2)$, and this determines the upper-right entry. Finally,
since $\Gamf_s = \Gamf U$ where $U=\left( \begin{smallmatrix} 0 & -1 \\ 1 & \ri\kh \end{smallmatrix} \right)$, then equating mixed partials leads to the necessary compatibility condition
\[U_t - P_s - [U,P] = 0,\]
which determines the lower right entry in $P$.
\end{proof}

\section{Stationary curves}\label{sec4}
\def\ki{\kk}
\def\gami{\gam}
\def\kf{\kh}

Let $\gami$ be a periodic unit-speed Legendrian curve {with curvature function $\ki$} and let $\gamf(s,t)$ be its evolution by the flow of ${\bf Z}_n$.
We say that $\gami$ is {\it stationary} if $\gamf(s,t)={\rm A}(t)\gami(s-at)$ for some constant $a\in \R$ and ${\rm A}\colon \R\to {\rm U}(2)$.
If $\gami$ is stationary, then the evolving curvature $\kf(s,t):=\ki(s-at)$ is a~periodic traveling wave solution of the $(n+1)$st mKdV equation.

This section focuses on closed curves with non-constant curvature which are stationary for $\VZ_1$, which we will call {\it $s$-loops}. By substituting $\kf$ into the mKdV equation, we see that $\ki(s)=\kf(s,0)$ is a solution of the third-order ODE
\begin{equation}\label{third}
\ki'''+\frac{3}{2}\ki^2\ki'+a \ki' = 0,\end{equation}
where prime denotes ${\rm d}/{\rm d}s$. Obviously the converse also holds: if $\ki$ is a solution of \eqref{third}, then $\kf(s,t):=\ki(s-at)$ is a traveling wave solution of the mKdV equation.

\begin{Remark} Equation \eqref{third} implies that the Clifford projection of an $s$-loop is a closed elastic curve in $S^2$ (see, e.g., \cite{He} and the literature therein). However, due to the constraint on the rationality of the total curvature, the Legendrian lifts of closed elastica of $S^2$ are, in general, not closed.
\end{Remark}

We will refer to a unit-speed Legendrian curve whose curvature is a non-constant periodic solution of \eqref{third} as an {\it $s$-curve}; thus, while $s$-loops are closed, $s$-curves are not necessarily closed despite having periodic curvature.

The study of $s$-loops is organized in two steps. The first is the analysis of the $s$-curves (obtained from periodic solutions of \eqref{third}) and the second is the derivation of the closure conditions, via integration of the Frenet equations \eqref{FrenetEquation}. We describe this scheme for the general case, suppressing some details, before we apply it to a specific class of stationary curves.

\subsection{The general scheme} By integrating \eqref{third} twice, we obtain the two conservation laws
\begin{subequations}\label{second}
\begin{align}
&\kk''+\frac{1}{2}\kk^3+a\kk+\frac12{b}=0, \label{ksecond}\\
&(\kk')^2+\frac{1}{4}\kk^4+a\kk^2+b\kk +c=0. \label{kfirstorder}
\end{align}
\end{subequations}
Thus, $\kk$ is determined by inverting an Abelian integral along the {\it phase curve} $y^2 + P_{a,b,c}(x)=0$, where $P_{a,b,c}$ is the fourth-degree polynomial
\begin{equation*}
P_{a,b,c}(x)=\frac14 x^4+ax^2+bx+c.
\end{equation*}
As a consequence of the Poincar\'e--Bendixon theorem,
periodic solutions of \eqref{kfirstorder} exist if and only if $P_{a,b,c}$ possesses either
\begin{itemize}\itemsep=0pt
\item four distinct real roots $e_1>e_2>e_3>e_4=-(e_1+e_2+e_3)$, which we will call the {\it dnoidal case}; or
\item two distinct real roots $e_1>e_2$ and two complex conjugate roots $-\frac{1}{2}(e_1+e_2)\pm \ri e_3$ where $e_3>0$, which we will call the {\it cnoidal case}; or
\item three distinct real roots, one them with multiplicity two, which is a degenerate limit of the cnoidal case.
\end{itemize}
In each case, the coefficients $a$, $b$ and $c$ of $P_{a,b,c}$ can be written as functions of real parameters $(e_1, e_2, e_3)$. We will refer to $\be=(e_1, e_2, e_3)$ as the {\it modulus} of~$\kk$.

Assuming that $\gami$ is an $s$-curve, we can arrange (by translating in $s$, and reversing the orientation of $\gami$ if necessary)
that $\kk(0)=e_2$, from which it follows that $\kk'(0)=0$.
With this choice of initial condition, the modulus $\be$ uniquely determines $\kk(s)$.
Moreover, any other $s$-curve with the same modulus is of the form $A \gam_0(s+c)$ for some matrix $A\in {\rm U}(2)$ and constant $c$, and is thus {congruent} to $\gami$.
Consequently, the {\em congruence classes} of $s$-curves of dnoidal type are in one-to-one correspondence with the elements of
\[{\mathcal D}=\bigl\{(e_1,e_2,e_3)\in \R^3\mid e_1>e_2>e_3>-(e_1+e_2+e_3)\bigr\},\]
while the congruence classes of $s$-curves of cnoidal type are in one-to-one correspondence with the elements of{\samepage
\[{\mathcal C}=\bigl\{(e_1,e_2,e_3)\in \R^3\mid e_1>e_2, e_3>0\bigr\}.\]
Sets ${\mathcal C}$ and ${\mathcal D}$ are the {\it moduli spaces} of the $s$-curves of dnoidal and cnoidal type, respectively.}

The curvature of an $s$-curve with modulus $\be=(e_1,e_2,e_3)$ is given by
\begin{alignat}{3}
&\kk(s)= \frac{q_{11}+q_{12}\dn^2(qs,m)}{q_{21}+q_{22}\dn^2(qs,m)}, \qquad&& \text{dnoidal case},& \nonumber\\
&\kk(s)= \frac{q_{11}+q_{12}\cn(qs,m)}{q_{21}+q_{22}\cn(qs,m)}, \qquad && \text{cnoidal case}, &\label{curvaturedn}
\end{alignat}
where $\cn$ and $\dn$ are the Jacobi elliptic functions, $m$ is the square of the Jacobi modulus \cite{Law},
and coefficients $q_{ij}$ and parameters $q$, $m$ are certain functions of $\be$ (see~\cite[formulas~(256.00) and~(259.00)]{BF}).
In the degenerate cnoidal case, $m=0$ and $\cn(q s,m)$ is replaced by $\cos(q s)$; in the sequel, we will not consider this case.

The wavelength (i.e., the least period of the curvature) is
\begin{equation}
\label{wl}\omega = \begin{cases}2{\rm K}(m)/{q}, &\text{dnoidal case,}\\
4{\rm K}(m)/{q}, & \text{cnoidal case,}\end{cases}
\end{equation}
where ${\rm K}(m)$ denotes the complete elliptic integral of the first kind. By integrating the curvature
over its wavelength, we obtain the {\it quantum of total curvature}
\begin{equation}\label{curvtot}\Phi_1(\be):=\frac{1}{2\pi}\int_0^{\omega} \kk \ds={\mathtt A}{\rm K}(m)+{\mathtt B}\Pi(n_1,m),
\end{equation}
which can be expressed in terms of $K(m)$ and the complete elliptic integral of the third kind $\Pi(n_1,m)$,
where ${\mathtt A}$, ${\mathtt B}$ and $n_1$ are functions of $\be$ (see in \cite[formulas~(340.03) and (341.03)]{BF}).
By Proposition~\ref{Maslov}, rationality of the quantum of curvature is a necessary condition for
an $s$-curve to be closed; however, this is not sufficient. In order to deduce sufficient conditions, we next define the conserved momentum and show how explicit $s$-curves can be obtained through integration by quadratures.

{\bf The momentum.}
Let $\mathfrak{h}_0(2)$ be the vector space of traceless $2\times 2$ hermitian matrices. For every modulus $\be$, we consider the map ${\mathtt H}\colon \R\to \mathfrak {h}_0(2)$ defined by
\begin{equation}\label{hamiltonian}
{\mathtt H}=
\begin{pmatrix}
2\kk + \tfrac14 b & \kk' + \ri\bigl(\tfrac12 \kk^2 + a-4\bigr) \\
\kk' - \ri\bigl(\tfrac12 \kk^2 + a-4\bigr) & -2\kk -\tfrac14 b
\end{pmatrix},
\end{equation}
where $\kk$ is as in \eqref{curvaturedn}. From (\ref{second}), it follows that ${\mathtt H}$ has constant eigenvalues $\pm \lambda$, where
\[\lambda=\frac{1}{4}\sqrt{256-128a+16a^2+b^2-16c}>0.\]
\big(Given the expression for $\lambda^2$ in terms of the components of $\be$, it is straightforward to
use calculus to prove that it never vanishes in either the cnoidal or dnoidal cases.\big)
Moreover, equation \eqref{second} implies that
\[
{\mathtt H}'+[U, {\mathtt H}]=0, \quad
\text{where $U = \begin{pmatrix} 0 & -1 \\ 1 & \ri \kk\end{pmatrix}$ as in formula~\eqref{FrenetEquation}.}
\]
It follows that, if $\Gamma$ is the moving frame along an $s$-curve $\gam$ with curvature $\kk$, then
\begin{equation}\label{momentum2}\Gamma  {\mathtt H}  \Gamma^{-1}={\mathfrak m},\end{equation}
where ${\mathfrak m}$ is a constant element of ${\mathfrak h}_0(2)$, called the {\it momentum} of $\gamma$. By multiplying $\gam$ by a matrix in ${\rm U}(2)$ if necessary, we can assume that
\begin{equation}\label{simconf}{\mathfrak m}=\begin{pmatrix} -\lambda &0 \\ \hphantom{-}0 &\lambda\end{pmatrix}.\end{equation}

\subsection{Integrability by quadratures}\label{quadsec}
Let ${\mathtt V}^j\colon \R\to \C^2$ for $j=1,2$ be the periodic maps
\begin{equation}\label{Vdefs}
{\mathtt V}^1 = \begin{pmatrix}{\mathtt H}^1_2 \\ -{\mathtt H}^1_1-\lambda\end{pmatrix},\qquad
{\mathtt V}^2 = \begin{pmatrix}-{\mathtt H}^2_2+\lambda \\ {\mathtt H}^2_1\end{pmatrix}.
\end{equation}

\begin{Proposition}\label{exceptional} For every $s \in \R$, ${\mathtt V}^1(s)$ and ${\mathtt V}^2(s)$ belong to the $\mp\lambda$-eigenspaces of ${\mathtt H}(s)$, respectively. They are everywhere linearly independent if $\gam$ is of dnoidal type, or if $\gam$ is of cnoidal type with {\it generic modulus} $($i.e., $8e_2+b+4\lambda\neq 0)$. In the exceptional $($i.e., non-generic cnoidal$)$ case, ${\mathtt V}^1$ and ${\mathtt V}^2$ vanish at $s=h\omega$ for each $h\in \Z$ $($where $\omega$ is the wavelength in $\eqref{wl})$, and are linearly independent everywhere else.

 \end{Proposition}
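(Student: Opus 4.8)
The plan is to verify the three claims---the eigenspace membership, the generic-case linear independence, and the exceptional-case vanishing pattern---by direct computation with the explicit matrix $\mathtt H$ in \eqref{hamiltonian}, using only the algebraic identity $\mathtt H^2 = \lambda^2 I$ (equivalent to $\det \mathtt H = -\lambda^2$, which in turn follows from the conservation laws \eqref{second}). First I would record that $\mathtt H$, being traceless with $\mathtt H^2 = \lambda^2 I$, has the property that the columns of $\mathtt H + \lambda I$ span the $(+\lambda)$-eigenspace and the columns of $\mathtt H - \lambda I$ span the $(-\lambda)$-eigenspace (wherever those columns are nonzero). A quick look at \eqref{Vdefs} shows that $\mathtt V^1 = \big(\mathtt H^1_2,\ -(\mathtt H^1_1 + \lambda)\big)^{\mathsf T}$ is, up to sign, the ``cross-column'' vector associated to the first row of $\mathtt H + \lambda I$; the identity $(\mathtt H - \lambda I)(\mathtt H + \lambda I) = 0$ then forces $\mathtt V^1$ into the $(-\lambda)$-eigenspace of $\mathtt H$. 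Likewise $\mathtt V^2$ is built from the second row of $\mathtt H - \lambda I$ and lands in the $(+\lambda)$-eigenspace. This disposes of the first sentence.

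For linear independence, note that $\mathtt V^1$ and $\mathtt V^2$ lie in complementary eigenspaces of $\mathtt H(s)$, so they fail to be independent only where one of them vanishes. Hence I would compute $|\mathtt V^1|^2 + |\mathtt V^2|^2$ (or, equivalently, examine when $\mathtt H + \lambda I$ or $\mathtt H - \lambda I$ has a zero row). Writing out the entries of $\mathtt H$ from \eqref{hamiltonian}, $\mathtt V^1$ vanishes exactly when $\kk' = 0$ and $\frac12\kk^2 + a - 4 = 0$ and $2\kk + \tfrac14 b + \lambda = 0$ simultaneously; I would use the first-order law \eqref{kfirstorder} to show this system has no solution in the dnoidal case and no solution in the generic cnoidal case, while in the non-generic cnoidal case it is solvable precisely at the turning point where $\kk$ attains the value $e_2$ (the normalization $\kk(0) = e_2$, $\kk'(0) = 0$ from the ``general scheme''). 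The non-generic condition $8e_2 + b + 4\lambda = 0$ is exactly the statement that $2e_2 + \tfrac14 b + \lambda = 0$, i.e., that $2\kk(0) + \tfrac14 b + \lambda = 0$, so the vanishing locus of $\mathtt V^1$ (and, by a symmetric computation, of $\mathtt V^2$) is $\{s : \kk(s) = e_2,\ \kk'(s) = 0\}$, which is the set $\{h\omega : h \in \Z\}$ by the definition of the wavelength $\omega$ in \eqref{wl} together with the choice $\kk(0) = e_2$.

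The main obstacle I anticipate is bookkeeping rather than conceptual difficulty: one must be careful that $\mathtt V^1$ and $\mathtt V^2$ vanish \emph{simultaneously} (and only at the maxima/minima of $\kk$ corresponding to the value $e_2$, not at the other turning point $e_1$), and one must check that the three scalar conditions defining their common zero are genuinely equivalent to the single non-generic condition $8e_2 + b + 4\lambda = 0$---this uses both \eqref{ksecond} evaluated at a turning point and the definition of $\lambda$. I would also confirm that in the cnoidal case $\kk$ takes the value $e_2$ only at the points $h\omega$ within one period (this is where $\cn$ passes through the appropriate value with $\cn' = 0$), so that the vanishing set is exactly $\omega\Z$ and not larger. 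Once these algebraic identities are in hand, the statement follows; outside the common zero set, complementarity of the eigenspaces gives linear independence immediately, and the smoothness/periodicity of $\mathtt V^1$, $\mathtt V^2$ is clear from their defining formulas since the entries of $\mathtt H$ are built from $\kk$ and $\kk'$, which are smooth periodic functions.
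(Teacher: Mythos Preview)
Your proposal is correct and follows essentially the same route as the paper: eigenspace membership from the algebraic identity $\mathtt H^2=\lambda^2 I$, reduction of linear independence to nonvanishing, localization of possible zeros to the turning points $\kk'=0$ (i.e., $\kk=e_1$ or $\kk=e_2$), and then an algebraic check at each turning point using the expressions for $a$, $b$, $\lambda$ in terms of the roots. The paper carries out exactly the case analysis you anticipate as the ``main obstacle''---showing explicitly that $\mathtt V^1(\tfrac12\omega)\neq 0$ in both the cnoidal and dnoidal cases by assuming the imaginary part of its top entry vanishes and verifying the bottom entry is then strictly signed---so your plan and the paper's proof coincide once that bookkeeping is filled in.
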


 \begin{proof}
 The first part of the statement is a straightforward consequence of (\ref{Vdefs}).
Moreover, if~${\mathtt V}^1$ and~${\mathtt V}^2$ are nonzero then they are
linearly independent as they are eigenvectors for different eigenvalues.

Since $\tV^1$, $\tV^2$ are periodic, we can assume that $s\in [0, \omega)$.
Note that the derivative $\kk'$ vanishes at $s=0$ or $s=\tfrac12 \omega$ and is nonzero elsewhere, so
$\tV^1(s)$, $\tV^2(s)$ are nonzero if $s\ne 0$ and $s\ne \tfrac12 \omega$.
Note also that $\kk(0) = e_2$ and $\kk\bigl(\tfrac12\omega\bigr) = e_1$.
Thus, we have
\[\renewcommand{\arraystretch}{1.1}
 \tV^1(0)=\begin{pmatrix} \tfrac12 {\ri}\bigl(e_2^2+2a-8\bigr) \\ -\frac{1}{4}(8e_2+b)-\lambda\end{pmatrix}, \qquad
 \tV^2(0)=\begin{pmatrix} \frac{1}{4}(8e_2+b)+\lambda \\ \tfrac12 {\ri}\bigl(e_2^2+2a-8\bigr)\end{pmatrix},
\]
and
\[\renewcommand{\arraystretch}{1.1}
\tV^1\bigl(\tfrac12\omega\bigr)=\begin{pmatrix}\tfrac12 \ri \bigl(e_1^2+2a-8\bigr)\\ -\frac{1}{4}(8e_1+b)-\lambda\end{pmatrix},\qquad
\tV^2\bigl(\tfrac12\omega\bigr)=\begin{pmatrix} \frac{1}{4}(8e_1+b)+\lambda\\ \tfrac12 {\ri}\bigl(e_1^2+2a-8\bigr)\end{pmatrix}.
\]
Hence, it suffices to prove that
${\mathtt V}^1$ is nowhere zero in the dnoidal or generic cnoidal cases, and that, in the exceptional case, ${\mathtt V}^1$ vanishes only at $s=0$.

Consider the cnoidal case. In this case, expressing $a$ in terms of $e_1$, $e_2$, $e_3$ gives
\begin{equation}\label{acnexpr}
e_1^2+2a-8=\frac{1}{8}\bigl(5e_1^2-2e_1e_2-3e_2^2+4e_3^2-64\bigr)
\end{equation}
for one half of the imaginary part of the upper entry of $\tV^1\bigl(\tfrac12 \omega\bigr)$. Suppose this vanishes; then
\[e_3=\frac{1}{2}\sqrt{-5e_1^2+2e_1e_2+3e_2^2+64}.\]
Using this equality and writing $\lambda$ and $b$ as a functions of $e_1$ and $e_2$, minus one times the bottom entry of $\tV^1\bigl(\tfrac12 \omega\bigr)$ becomes
\[\frac{1}{4}(b+8e_1)+\lambda = \frac{1}{8}(e_1-e_2)\bigl(16+(e_1+e_2)^2\bigr)>0.\]
Hence, ${\mathtt V}^1\bigl(\tfrac12 \omega\bigr)\neq 0$.
It is clear that if $8e_2+b+4\lambda\neq 0$ then $\tV^1(0) \ne 0$. On the other hand, if $4\lambda = -(8e_2 + b)$, then
squaring both sides and using \eqref{acnexpr} gives $e_2^2+2a-8=0$, so $\tV^1(0)=0$.

Consider now the dnoidal case. In this case, the analogue of \eqref{acnexpr} is
\begin{equation}\label{adnexpr}
e_1^2+2a-8 = \frac12\bigl(e_1^2-e_2^2-e_3^2-e_1 e_2 - e_1 e_3 - e_2 e_3 -16\bigr).
\end{equation}
Suppose that left-hand side \big(which is one half of the imaginary part of the upper entry of $\tV^1\bigl(\tfrac12 \omega\bigr)$\big) vanishes.
Recall that the roots $e_1$, $e_2$, $e_3$, $e_4$ have been ordered so that $e_1>e_2>e_3>e_4$. Since their sum is zero, root $e_1$ is positive. Then, setting the right-hand side of \eqref{adnexpr} to
zero and solving gives
\[e_1=\frac{1}{2}\Bigl(e_2+e_3+\sqrt{64+5e_2^2+6e_2e_3+5e_3^2}\Bigr).\]
Then, expressing $b$ in terms of $e_1$, $e_2$, $e_3$ and using the last equation gives
\[b+8e_1=\frac{1}{4}\bigl(16+(e_2+e_3)^2\bigr)\Bigl(2e_2+2e_3+\sqrt{64+5e_2^2+6e_2e_3+5e_3^2}\Bigr)>0.\]
Hence, since $\lambda$ is positive we have $b+8e_1+4\lambda>0$, and so $\tV^1\bigl(\tfrac12 \omega\bigr) \ne 0$.
Next, suppose the upper entry of $\tV^1(0)$ vanishes. Then \eqref{adnexpr} gives
\[e_1^2-e_2^2+e_3^2+e_1e_2+e_1e_3+e_2e_3+16=0.\]
Solving with respect to $e_1$ and taking into account that $e_1>e_4$, we obtain
\[e_1=\frac{1}{2}\Bigl(-e_2-e_3+\sqrt{5e_2^2-2e_2e_3-3e_3^2-64}\Bigr).\]
Again, expressing $b$ in terms of the moduli and using the last equation gives
\[-\frac{1}{4}(b+8e_2)-\lambda=\frac{1}{16}\bigl(-(e_2-e_3)\bigl((e_2+e_3)^2+16\bigr)-\big|(e_2-e_3)\bigl((e_2+e_3)^2+16\bigr)\big|\bigr).\]
Since $e_2>e_3$ the left hand side is strictly negative, and thus $\tV^1(0)\ne0$.
\end{proof}

Let ${\rm J}=\R$ in the generic case and ${\rm J}=\R\setminus \{h\omega\}_{h\in \Z}$ in the exceptional case. From (\ref{momentum2}), it follows that $\Gamma(s) {\mathtt V}^1(s)$ and $\Gamma(s) {\mathtt V}^2(s)$ are $\mp \lambda$-eigenvectors of ${\mathfrak m}$, for every $s\in {\rm J}$.
Since these vectors are fixed up to scalar multiple,
it follows that there exist smooth functions
$\ell_j\colon {\rm J}\to {\mathbb C}$, $j=1,2$ such that
\begin{equation}\label{if1}\frac{{\rm d}}{{\rm d}s}\bigl(\Gamma {\mathtt V}^j\bigr) =\ell_j \Gamma {\mathtt V}^j,\qquad j=1,2.
\end{equation}

Using (\ref{FrenetEquation}), we rewrite ({\ref{if1}) in the form
\begin{equation}\label{if2}
\frac{{\rm d}}{{\rm d}s}\tV^j
=-(U-\ell_j{\rm Id}_{2\times 2}){\mathtt V}^j,\qquad j = 1,2.
\end{equation}
From (\ref{hamiltonian}) and (\ref{Vdefs}), we can express ${\mathtt V}^j$ and its derivative in terms of $\kk$ and $\kk'$. Solving (\ref{if2}}) for $\ell_{1}$ and $\ell_{2}$, we obtain
\[
\Re \ell_1=\Re \ell_2=\frac{4\kk'}{8\kk+4\lambda+b}, \qquad
\Im \ell_1=\frac{3\kk}{4}+\Lambda,\qquad
\Im \ell_2=\frac{\kk}{4}-\Lambda,
\]
where
\begin{equation}\label{Lambda}
\Lambda = \frac{16(4-a)+(b+4\lambda)\kk}{4(8\kk+4\lambda+b)}.
\end{equation}

On the other hand, (\ref{if1}) implies that
\begin{equation*}
\Gamma|_{{\rm J}}\, {\rm e}^{-{\int \! \ell_j {\rm d}s}} {\mathtt V}^j= \Xi^j,\qquad j=1,2,
\end{equation*}
where $\Xi^1$ and $\Xi^2$ are locally constant maps with values in the $\mp\lambda$-eigenspaces of ${\mathfrak m}$.
Since ${\mathfrak m}$ is in diagonal form, $\Xi^1=\bigl(\xi_1^1,0\bigr)^{\mathsf{T}}$ and $\Xi^2=\bigl(0,\xi_2^2\bigr)^{\mathsf{T}}$, where $\xi_1^1$, $\xi_2^2$ are nonzero constants. Choosing appropriately the primitives of the $\ell_j$-functions, we may assume $\tomedit{\xi^1_1}=\xi^2_2=1$. Hence, the Frenet frame along an $s$-curve with modulus $\be$ is given by
\begin{equation}\label{quadrature}
\Gamma|_{{\rm J}}=\sqrt{8\kk+4\lambda+b}
 \begin{pmatrix} {\rm e}^{\ri \int\frac{3}{4}\kk +\Lambda {\rm d}s} & 0 \\ 0 & {\rm e}^{\ri \int \frac{1}{4}\kk -\Lambda {\rm d}s}\end{pmatrix} {\mathtt V}^{-1},
\end{equation}
where ${\mathtt V}=\bigl({\mathtt V}^1,{\mathtt V}^2\bigr)$ and $\Lambda$ is as in \eqref{Lambda}.
Using (\ref{curvaturedn}), we can rewrite $\Lambda$ in the form
\begin{alignat*}{3}
&\Lambda(s)=
 \frac{p_{11}+p_{12}\dn^2(qs,m)}{p_{21}+p_{22}\dn^2(qs,m)},\qquad && \text{dnoidal case,}& \\
&\Lambda(s)= \frac{p_{11}+p_{12}\cn(qs,m)}{p_{21}+p_{22}\cn(qs,m)},\qquad && \text{cnoidal case},&
\end{alignat*}
the coefficients $p_{ij}$ being appropriate functions of $\be$. Using standard elliptic integrals we obtain
\[
\Phi_2(\be):=\frac{1}{2\pi}\int_0^{\omega} \Lambda \ds={\mathtt C}{\rm K}(m)+{\mathtt D}\Pi(n_2,m),
\]
where ${\mathtt C}$, ${\mathtt D}$ and $n_2$ are functions of the modulus~$\be$. As a consequence of \eqref{curvtot} and \eqref{quadrature}, we have the following.

\begin{Proposition} An $s$-curve with modulus $\be$ is closed if and only if
\[(\Phi_1(\be),\Phi_2(\be))\in {\mathbb Q}^2.\]\end{Proposition}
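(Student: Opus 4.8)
The plan is to reduce closedness of an $s$-curve to a condition on the monodromy of its $\mathrm U(2)$-valued Frenet frame, and then to read that condition off from the quadrature formula \eqref{quadrature}. For the \emph{monodromy step}, let $\gam$ be an $s$-curve whose curvature $\kk$ has least period $\omega$ (the wavelength in \eqref{wl}), and let $\Gamma$ be its Frenet frame. Since $\kk(s+\omega)=\kk(s)$, the map $s\mapsto\Gamma(s+\omega)$ solves the same linear system $Y_s=Y\,U$ as $\Gamma$ in \eqref{FrenetEquation}, so $\tfrac{\rd}{\rd s}\bigl(\Gamma(s+\omega)\Gamma(s)^{-1}\bigr)=0$ and $\Delta:=\Gamma(s+\omega)\Gamma(s)^{-1}$ is a constant element of $\mathrm U(2)$; iterating gives $\Gamma(s+N\omega)=\Delta^N\Gamma(s)$ for all $N\in\Z$. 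I claim $\gam$ is closed if and only if $\Delta^N={\rm Id}$ for some $N\in\Z^+$. Indeed, if $\gam(s+P)=\gam(s)$ for some $P>0$, differentiating twice and using \eqref{gamss} shows that $\gam_s$ and $\kk$ are also $P$-periodic, so $P=N\omega$ for some $N\in\Z^+$, and then $\Gamma(s+N\omega)=\Gamma(s)$ forces $\Delta^N={\rm Id}$; conversely, $\Delta^N={\rm Id}$ gives $\Gamma(s+N\omega)=\Gamma(s)$, whose first column says that $\gam$ is $N\omega$-periodic.

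Next I would identify $\Delta$ using \eqref{quadrature}. Writing that formula as $\Gamma=\sqrt{8\kk+4\lambda+b}\;D\,{\mathtt V}^{-1}$ with
\[
D(s)=\begin{pmatrix}{\rm e}^{\,\ri\int_0^s(\frac34\kk+\Lambda)\,{\rm d}t}&0\\[2pt]0&{\rm e}^{\,\ri\int_0^s(\frac14\kk-\Lambda)\,{\rm d}t}\end{pmatrix},
\]
I note that the scalar $\sqrt{8\kk+4\lambda+b}$ and the matrix ${\mathtt V}$ are $\omega$-periodic, being built from the $\omega$-periodic functions $\kk$, $\kk'$, so the whole monodromy sits in $D$. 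Since $\kk$ and $\Lambda$ are $\omega$-periodic, \eqref{curvtot} and the definition $\Phi_2(\be)=\tfrac1{2\pi}\int_0^\omega\Lambda\,{\rm d}s$ give $\int_s^{s+\omega}(\tfrac34\kk+\Lambda)\,{\rm d}t=2\pi(\tfrac34\Phi_1+\Phi_2)$ and $\int_s^{s+\omega}(\tfrac14\kk-\Lambda)\,{\rm d}t=2\pi(\tfrac14\Phi_1-\Phi_2)$, hence $D(s+\omega)=\Delta\,D(s)$ with
\[
\Delta=\begin{pmatrix}{\rm e}^{\,2\pi\ri(\frac34\Phi_1+\Phi_2)}&0\\[2pt]0&{\rm e}^{\,2\pi\ri(\frac14\Phi_1-\Phi_2)}\end{pmatrix}.
\]
In the exceptional cnoidal case, \eqref{quadrature} is valid only on ${\rm J}$; there one checks that the integrands $\tfrac34\kk+\Lambda$ and $\tfrac14\kk-\Lambda$ extend smoothly across the points $h\omega$ (the numerator of $\Lambda$ vanishes there to the same order as its denominator, so $\Phi_2$ is finite), and the identity $\Gamma(s+\omega)=\Delta\,\Gamma(s)$ then extends from the dense set ${\rm J}$ to all of $\R$ by continuity.

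Finally I would conclude by elementary arithmetic. By the monodromy step, $\gam$ is closed iff there is $N\in\Z^+$ with ${\rm e}^{2\pi\ri N(\frac34\Phi_1+\Phi_2)}={\rm e}^{2\pi\ri N(\frac14\Phi_1-\Phi_2)}=1$, i.e.\ with $N(\tfrac34\Phi_1+\Phi_2)\in\Z$ and $N(\tfrac14\Phi_1-\Phi_2)\in\Z$; such an $N$ exists iff $\tfrac34\Phi_1+\Phi_2$ and $\tfrac14\Phi_1-\Phi_2$ are both rational, and taking their sum and difference this is equivalent to $\Phi_1\in\mathbb Q$ and $\Phi_2\in\mathbb Q$. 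Hence $\gam$ is closed iff $(\Phi_1(\be),\Phi_2(\be))\in\mathbb Q^2$. The main obstacle I anticipate is the bookkeeping in the exceptional cnoidal case — confirming that $\Phi_2$, the matrix $\Delta$, and the monodromy identity all behave well across the points where \eqref{quadrature} degenerates; everything else is the clean combination of the monodromy step with the explicit diagonal form of $\Delta$.
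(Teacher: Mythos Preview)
Your argument is correct and is exactly the approach the paper has in mind: the paper states the proposition simply ``as a consequence of \eqref{curvtot} and \eqref{quadrature}'', and the subsequent Remark makes the monodromy criterion $\mathtt M=\Gamma|_\omega\,\Gamma|_0^{-1}$ explicit, which is precisely your $\Delta$. Your write-up supplies the details the paper omits; the only cosmetic slip is that ``taking their sum and difference'' doesn't literally give $\Phi_1$ and $\Phi_2$, but the sum gives $\Phi_1\in\mathbb Q$ and then either combination yields $\Phi_2\in\mathbb Q$, so the conclusion stands.
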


\begin{Remark}
If we compute the {\it monodromy} ${\mathtt M}:=\Gamma|_{\omega}\, \Gamma|_0^{-1}\in {\rm U}(2)$ for an $s$-curve $\gam$, then $\gam$ is closed if and only if ${\mathtt M}$ has finite order, the order
being the wave number of $\gam$.\end{Remark}

\begin{Proposition} \label{timeevolution}Let $\gami$ be an $s$-curve with modulus $\be$ and momentum ${\mathfrak m}$ given by \eqref{simconf}.
Then
\begin{equation*}
\gamf(s,t)= \exp \biggl(\ri \biggl({\mathfrak m}-\frac14 b{\rm Id}_{2\times 2}\biggr)t\biggr) \gami(s-at)
\end{equation*}
is the evolution of $\gami$ by the flow of $\VZ_1$. \end{Proposition}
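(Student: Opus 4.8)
The plan is to verify the claimed formula at the level of moving frames, using Lemma~\ref{FrenetEvolution}. Write $A(t):=\exp\bigl(\ri(\mathfrak m-\tfrac14 b\,{\rm Id}_{2\times 2})t\bigr)$ and set $\gamf(s,t)=A(t)\gami(s-at)$, $\Gamf(s,t)=A(t)\Gamma(s-at)$, where $\Gamma=(\gami\ \ \gami_s)$ is the moving frame of $\gami$. I would first record the routine facts: since $\mathfrak m$ is Hermitian and $b$ is real the exponent is skew-Hermitian, so $A(t)\in{\rm U}(2)$; since $\mathfrak m$ is in the diagonal form \eqref{simconf}, $A(t)$ is diagonal and hence commutes with $\mathfrak m$ and with scalar matrices; since ${\rm U}(2)$ preserves the Legendrian condition, unit speed, and the curvature function while $s\mapsto s-at$ is an orientation-preserving reparametrization, $\gamf(\cdot,t)$ is a unit-speed Legendrian curve with curvature $\kf(s,t)=\ki(s-at)$ and with moving frame precisely $\Gamf(\cdot,t)$; and $A(0)={\rm Id}_{2\times 2}$, so $\gamf(\cdot,0)=\gami$. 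Since the flow $\partial\gam/\partial t=\VZ_1[\gam]$ has unique solutions for given initial data, it then suffices to show $\partial\Gamf/\partial t=\Gamf P$ with $P$ as in \eqref{Z1flowPform} and $\kh=\ki(s-at)$: reading off the first column and recalling (from the proof of Lemma~\ref{FrenetEvolution}) that this column encodes the $\VZ_1$-flow then gives $\partial\gamf/\partial t=\VZ_1[\gamf]$.

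Next I would compute $\partial\Gamf/\partial t$. Using $A'(t)=\ri(\mathfrak m-\tfrac14 b\,{\rm Id}_{2\times 2})A(t)=A(t)\,\ri(\mathfrak m-\tfrac14 b\,{\rm Id}_{2\times 2})$ (the commutation because $A(t)$ is diagonal), the Frenet equation $\Gamma_s=\Gamma U$ from \eqref{FrenetEquation}, and $\partial_t[\Gamma(s-at)]=-a\,\Gamma(s-at)U$, one obtains
\[
\frac{\partial\Gamf}{\partial t}=A'(t)\Gamma(s-at)-a\,A(t)\Gamma(s-at)U=A(t)\Gamma(s-at)\Bigl(\ri\,\Gamma(s-at)^{-1}\bigl(\mathfrak m-\tfrac14 b\,{\rm Id}_{2\times 2}\bigr)\Gamma(s-at)-aU\Bigr).
\]
The momentum relation \eqref{momentum2} says $\Gamma^{-1}\mathfrak m\,\Gamma={\mathtt H}$, so this reduces to $\partial\Gamf/\partial t=\Gamf\bigl(\ri({\mathtt H}-\tfrac14 b\,{\rm Id}_{2\times 2})-aU\bigr)$. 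It remains only to verify the finite algebraic identity $\ri\bigl({\mathtt H}-\tfrac14 b\,{\rm Id}_{2\times 2}\bigr)-aU=P$, with ${\mathtt H}$, $U$, $P$ given by \eqref{hamiltonian}, \eqref{FrenetEquation}, \eqref{Z1flowPform} (and $\kk=\ki(s-at)$): the $(1,1)$ entry and the two off-diagonal entries match after the $a$- and $\tfrac14 b$-terms cancel, while the $(2,2)$ entry matches precisely because the second-order conservation law \eqref{ksecond} allows one to rewrite $\kk_{ss}+\tfrac12\kk^3-2\kk$ as $-(2\kk+a\kk+\tfrac12 b)$, which is exactly $-\ri^{-1}$ times the $(2,2)$ entry of $\ri({\mathtt H}-\tfrac14 b\,{\rm Id}_{2\times 2})-aU$.

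I do not expect a genuine obstacle here; the content is a direct verification, and the structural point that makes it work is that normalizing the momentum to the diagonal form \eqref{simconf} makes $A(t)$ diagonal, so conjugation by $A(t)$ fixes $\mathfrak m$ and $A'$ can be moved past $A$, reducing everything to the conjugation formula $\Gamma^{-1}\mathfrak m\Gamma={\mathtt H}$ together with a single $2\times 2$ matrix computation. The only care needed is tracking the cancellation of the $a$- and $b$-dependent terms and the one use of \eqref{ksecond} in the lower-right entry. (Equivalently, one can run the same computation at the level of the curve itself: differentiating $\gamf=A(t)\gami(s-at)$ and substituting $\mathfrak m\,\gami={\mathtt H}^1_{1}\,\gami+{\mathtt H}^2_{1}\,\gami_s$ — the first column of $\mathfrak m\,\Gamma=\Gamma{\mathtt H}$ — yields $\partial\gamf/\partial t=(\tfrac12\kf^2-4)\gamf_s+\kf_s(\ri\gamf_s)+2\kf(\ri\gamf)=\VZ_1[\gamf]$ after the same cancellation.)
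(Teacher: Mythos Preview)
Your proof is correct and follows essentially the same route as the paper's: both reduce the claim to the matrix identity $\ri\bigl({\mathtt H}-\tfrac14 b\,{\rm Id}_{2\times 2}\bigr)-aU=P$, established via the momentum relation \eqref{momentum2} together with the conservation law \eqref{ksecond}. The only difference is the direction of the argument---the paper assumes $\Gamf(s,t)=\rA(t)\Gamma(s-at)$ and solves $\rA^{-1}\rA'=\ri\bigl(\mathfrak m-\tfrac14 b\,{\rm Id}_{2\times 2}\bigr)$ for $\rA(t)$, whereas you posit $\rA(t)$ and verify $\partial\Gamf/\partial t=\Gamf P$---but the computation is the same (and your commutation $A'A^{-1}=A^{-1}A'$ needs no diagonality: a matrix always commutes with its own exponential).
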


\begin{proof} From \eqref{FrenetEquation} and Lemma \ref{FrenetEvolution}, we have
\[\Gamf^{-1}  {\rm d}\Gamf = {\rm U}(s-at)\ds + P(s-at)\, {\rm d}t,\]
where ${\rm U}(s-at) = \left( \begin{smallmatrix} 0 & -1 \\ 1 & \ri k(s-at) \end{smallmatrix} \right)$ and $
P(s-at)$ is given by \eqref{Z1flowPform} with $\kh(s,t) = k(s-at)$.
Using~\eqref{ksecond}, we can write
\[\kk_{ss}(s-at)+\frac12 \kk(s-at)^3-2\kk(s-at)=-(a+2)\kk(s-at)-\frac12 b,\]
so that
\begin{equation}\label{Pstationary}
P(s-a t) =
\begin{pmatrix} 2\ri \kh & 4-\tfrac12 \kh^2 +\ri \kh_s \\
-4+\tfrac12 \kh^2 + \ri \kh_s & -\ri\bigl((a+2)\kh+\tfrac12{b}\bigr) \end{pmatrix}.
\end{equation}

Since $\gami$ is stationary, its evolving Frenet frame satisfies $\Gamf(s,t)=\rA(t) \Gamma(s-at)$, where
${\rA\colon\R\to {\rm U}(2)}$ is a smooth map with $\rA(0)={\rm Id}_{2\times 2}$.
Substituting this into $\frac{\partial \Gamf}{\partial t} = \Gamf P(s-at)$, we deduce that
\begin{align}
P(s-at)={}& \Gamma(s-at)^{-1} \rA(t)^{-1}\frac{\di}{\di t} (\rA(t) \Gamma(s-at))\nonumber\\
={}&\Gamma(s-at)^{-1} \rA(t)^{-1}\left(\rA'(t)  \Gamma(s-at)+\rA(t) \frac{\di}{\di t} \Gamma(s-at)\right)\nonumber\\
={}&\Gamma(s-at)^{-1} \rA(t)^{-1} \rA'(t) \Gamma(s-at) -a {\rm U}(s-at).\label{eqU2}
\end{align}
From \eqref{Pstationary} and \eqref{hamiltonian}, we obtain
\begin{equation}\label{eqUU2} P(s-at)+a {\rm U}(s-at)=\ri \left({\mathtt H}(s-at)-\frac14 b\,{\rm Id}_{2\times 2}\right).\end{equation}
Then \eqref{eqU2} and \eqref{eqUU2} imply that
\[ \rA(t)^{-1} \rA'(t) =\ri \Gamma(s-at) {\mathtt H}(s-at) \Gamma(s-a t)^{-1}
-\ri \frac14 b\,{\rm Id}_{2\times 2}= \ri \left({\mathfrak m}-\frac14{b}\,{\rm Id}_{2\times 2}\right).\]
Exponentiating the matrix on the right and substituting in $\gamf(s,t) = \rA(t) \gami(s-a t)$ gives the required formula.
\end{proof}

\begin{Remark} \label{timeevolution2} The evolution of $\gami$ is periodic in $t$ if and only if there exist $p_1,p_2\in {\mathbb Q}$ such that $b=p_1\lambda$ and
 $\pi a= p_2\omega\lambda$. An $s$-loop satisfying these two constraints is said to be {\it $t$-periodic}. However, in some cases (for instance if $b=0$) the trace $|\gamf(-,t)|$ can evolve periodically in time even though $\gami$ is not time-periodic.
\end{Remark}

\subsection[Phase-symmetrical s-curve of cnoidal type]{Phase-symmetrical $\boldsymbol{s}$-curve of cnoidal type}

We now specialize the general scheme to $s$-curves and $s$-loops of \emph{cnoidal type} whose phase curves are symmetrical about the imaginary axis; we call these {\it $\phi$-curves and $\phi$-loops} for short.
The moduli of a $\phi$-curve satisfy
$e_2=-e_1$. We drop the dependence upon $e_2$ and we identify the moduli space
of $\phi$-curves with the quadrant
${\mathcal C}_0=\bigl\{(e_1,e_3)\in \R^2 \mid e_1>0, \, e_3>0\bigr\}$.
(The zero subscript indicates our symmetric assumption.)
The curvature and the wavelength of a $\phi$-curve with modulus $\be=(e_1,e_3)$ are given by
\begin{equation}\label{curvaturecndn}
\kk =
-e_1\cn(\ell s,m),\qquad
\omega_\be=
\frac{4}{\ell}{\rm K}(m),
\end{equation}
where $m=\frac{e_1^2}{e_1^2+e_3^2}$ and $\ell = \frac12 |\be|=\frac12 \sqrt{e_1^2 + e_3^2}$.
From the curvature formula it follows that the quantum $\Phi_1$ of total curvature of a $\phi$-curve vanishes. As a consequence we have:

\begin{Corollary} A $\phi$-curve of modulus $\be$ is closed if and only if $\Phi_2(\be)\in {\mathbb Q}$. The Maslov index of a $\phi$-loop is zero.
\end{Corollary}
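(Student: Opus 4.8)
The plan is to derive both assertions from two things already established: the general closure criterion, which says an $s$-curve of modulus $\be$ is closed exactly when $(\Phi_1(\be),\Phi_2(\be))\in\mathbb{Q}^2$, and the Maslov formula ${\boldsymbol\mu}_\gamma=\frac{1}{2\pi}\int_0^L\kk\ds$ of Proposition~\ref{Maslov}. The one computation that makes everything collapse is the vanishing of the quantum of total curvature $\Phi_1(\be)$ for a $\phi$-curve, which is asserted in the paragraph preceding the Corollary; so the proof amounts to supplying that computation and then reading off the consequences.

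First I would check $\Phi_1(\be)=0$ directly. By \eqref{curvtot} and the curvature formula $\kk=-e_1\cn(\ell s,m)$ from \eqref{curvaturecndn}, $\Phi_1(\be)=-\frac{e_1}{2\pi}\int_0^{\omega_\be}\cn(\ell s,m)\ds$; as $s$ runs over $[0,\omega_\be]$ the argument $\ell s$ runs over one full period $[0,4{\rm K}(m)]$ of $\cn(\cdot,m)$, and since $\cn(u+2{\rm K}(m),m)=-\cn(u,m)$ this integral is zero. Hence $\Phi_1(\be)=0\in\mathbb{Q}$, so the condition $(\Phi_1(\be),\Phi_2(\be))\in\mathbb{Q}^2$ in the general closure criterion is equivalent to $\Phi_2(\be)\in\mathbb{Q}$, which gives the first claim.

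For the Maslov index, let $\gam$ be a $\phi$-loop with minimal period $L$. Since $\kk(s+L)=\kk(s)$ and $\omega_\be$ is by definition the least period of the curvature, $L=N\omega_\be$ for some positive integer $N$. Then Proposition~\ref{Maslov} gives ${\boldsymbol\mu}_\gamma=\frac{1}{2\pi}\int_0^{N\omega_\be}\kk\ds=N\Phi_1(\be)=0$, as desired. I do not expect a genuine obstacle here; the only delicate point is the one already flagged in Section~\ref{sec2}, that $L$ need not be the \emph{minimal} period of $\kk$ — but it is certainly a period, hence an integer multiple of $\omega_\be$, and that is all the computation requires.
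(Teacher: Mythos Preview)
Your argument is correct and is exactly the computation the paper leaves implicit: it asserts that the curvature formula \eqref{curvaturecndn} forces $\Phi_1(\be)=0$, and then states the Corollary ``as a consequence'' of the general closure criterion $(\Phi_1,\Phi_2)\in\mathbb{Q}^2$ together with Proposition~\ref{Maslov}. Your verification that $\int_0^{4\rK(m)}\cn(u,m)\,{\rm d}u=0$ via the half-period antisymmetry, and your observation that $L$ is an integer multiple of $\omega_\be$, are precisely the details one would supply.
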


\begin{Remark} The modulus $\be$ is exceptional (in the sense of Proposition \ref{exceptional}) if and only if it is an element of ${\mathcal C}^{*}_0=\{\be\in {\mathcal C}_0 \mid |\be|=4\}$.
\end{Remark}

In order to compute $\Phi_2(\be):=\frac{1}{2\pi}\int_0^{\omega_\be} \Lambda \, {\rm d}s$, we start with the expressions for $a$ and $\lambda$ in terms of the modulus, take into account that $b=0$, and use (\ref{curvaturecndn}). Then expression~\eqref{Lambda} for the $\Lambda$-function becomes
\begin{equation}\label{Lambda2} \Lambda = \dfrac{ 16+e_1^2-e_3^2 -\lambda e_1 \cn(\ell s, m)}{4\lambda - 8 e_1 \cn(\ell s, m)},
\end{equation}
while \eqref{Lambda} specializes to $\lambda = \tfrac14 \sqrt{(e_1^2+e_3^2-16)^2 + 64 e_1^2}$.

From (\ref{Lambda2}), using the standard elliptic integral (341.03) of \cite{BF}, we obtain
\[
\Phi_2(\be)=\begin{cases}\displaystyle \frac{\lambda}{2\pi |\be|}\biggl({\rm K}(m)-
\frac{|\be |^2+16}{|\be |^2-16}\Pi \biggl(\frac{-64e_1^2}{(|\be |^2-16)^2},m\biggr)\biggr), &\be\notin {\mathcal C}_0^*,\\
\displaystyle \frac{e_1}{4\pi}{\rm K}\biggl(\frac{e_1^2}{16}\biggr), & \be\in {\mathcal C}_0^*.
\end{cases}
\]
The function $\Phi_2$ is real-analytic on ${\mathcal C}_0\setminus {\mathcal C}^*_0$ and has a jump discontinuity on ${\mathcal C}^*_0$. However, its regularization
\[
\Phit_2(\be)=\begin{cases}\displaystyle \Phi_2(\be), & \be\in {\mathcal C}^-_0,\\
\displaystyle\Phi_2(\be)+\frac{1}{2}, & \be\in {\mathcal C}^*_0,\\
\displaystyle\Phi_2(\be)+1, & \be\in {\mathcal C}^+_0,
 \end{cases}
 \]
 where ${\mathcal C}^{+}_0=\{\be\in {\mathcal C}_0 \mid |\be|>4\}$ and ${\mathcal C}^{-}_0=\{\be\in {\mathcal C}_0 \mid |\be|<4\}$ are the connected components of~${\mathcal C}_0\setminus {\mathcal C}^*_0$, is real analytic on~${\mathcal C}_0$.
 Since $\Phit_2$ differs from $\Phi_2$ by a rational number, we have the following.

 \begin{Corollary} A $\phi$-curve with modulus $\be$ is closed if and only if
 $\Phit_2(\be)\in {\mathbb Q}$.\end{Corollary}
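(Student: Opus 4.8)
The plan is to reduce the statement immediately to the closure criterion already proved. Recall the preceding Corollary, which asserts that a $\phi$-curve with modulus $\be$ is closed precisely when $\Phi_2(\be)\in\mathbb{Q}$; this was obtained by substituting the identity $\Phi_1(\be)=0$ — valid for every phase-symmetric cnoidal curve, since $\kk=-e_1\cn(\ell s,m)$ has vanishing integral over a wavelength — into the general condition $(\Phi_1(\be),\Phi_2(\be))\in\mathbb{Q}^2$. Hence it suffices to check that $\Phi_2(\be)$ is rational exactly when $\Phit_2(\be)$ is.

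This last equivalence is purely formal. By the very definition of the regularization, the difference $\Phit_2(\be)-\Phi_2(\be)$ equals $0$, $\tfrac12$, or $1$ according as $\be$ lies in ${\mathcal C}^-_0$, ${\mathcal C}^*_0$, or ${\mathcal C}^+_0$ — in every case a rational number. Adding a fixed rational correction neither creates nor destroys rationality, so $\Phi_2(\be)\in\mathbb{Q}$ if and only if $\Phit_2(\be)\in\mathbb{Q}$, and combining this with the earlier Corollary yields the result.

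There is no genuine obstacle here: all of the substantive work — the explicit evaluation of $\Phi_2$ in terms of complete elliptic integrals of the first and third kinds, and the verification that the shifts $0,\tfrac12,1$ are precisely those making $\Phit_2$ real-analytic across the locus ${\mathcal C}^*_0$ where $\Phi_2$ has its jump discontinuity — has already been carried out in the paragraphs above. The present Corollary merely repackages the closure test in terms of the globally real-analytic invariant $\Phit_2$, which is the form most convenient for the subsequent discussion of which $\phi$-curves actually close up.
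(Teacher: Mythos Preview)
Your proof is correct and follows precisely the paper's own argument: the paper introduces the Corollary with the sentence ``Since $\Phit_2$ differs from $\Phi_2$ by a rational number, we have the following,'' which is exactly the observation you spell out. There is nothing to add.
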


 \begin{Definition} Let $\gam$ be a $\phi$-loop with modulus $\be$ such that $\Phit_2(\be)=q\in {\mathbb Q}$. We call $q$
 the {\it characteristic number} of $\gam$. \end{Definition}

An experimental analysis of the symplectic gradient of $\Phit_2$ leads to the following observations (see also Figure \ref{FIG1S4New}):
\begin{itemize}\itemsep=0pt
\item The map $\Phit_2$ is a submersion from ${\mathcal C}_0$ onto the interval ${\mathtt J}_{1/2}=(1/2,+\infty)$. For $r\in \tJ_{1/2}$, let~$\Sigma_r$ denote the {\it fiber} (or level set) $\Phit_2^{-1}(r)$. Then $\Sigma_r$ intersects ${\mathcal C}^*_0$ transversely in a~single point.
In particular, for every $q\in {\mathtt J}_{1/2}\cap {\mathbb Q}$, the points along $\Sigma_q$ correspond to a~$1$-parameter family of distinct congruence classes of $\phi$-loops with characteristic number~$q$.

\item The fiber $\Sigma_1$ is unbounded, and $\partial \Sigma_1=\{\be_1^{-}\}$ where $\be_1^{-}=(0,2)$. When $\be \in \Sigma_1$ tends to $\be_1^{-}$, the tangent to $\Sigma_1$ at $\be$ limits to a horizontal line. In addition, the oblique line $e_3=p e_1$ for $p \approx 0.448103$ is an asymptote of $\Sigma_1$.

\item If $r\in (1/2,1)$, the fiber $\Sigma_r$ is bounded, lies above $\Sigma_1$, and has
 two boundary points along the $e_3$-axis: the upper boundary point $\be_r^{+}=\bigl(0,e_{3,r}^{+}\bigr)$ where $e_{3,r}^{+}>4$, and the lower boundary point $\be_r^{-}=(0,e_{3,r}^{-})$ where $2< e_{3,r}^{-}<4$. When $\be \in \Sigma_r$ tends to $\be_r^{\pm}$, the tangent to $\Sigma_r$ at $\be$ tends to a horizontal line.

\item If $r>1$, the fiber $\Sigma_r$
 lies below $\Sigma_1$ and has a unique boundary point $\be_r^{-}=(0,e_{3,r}^{-})$,
 with $e_{3,r}^{-}\in (0,2)$. As in the previous case, when $\be \in \Sigma_r$ tends to $\be_r^{-}$, the tangent to $\Sigma_r$ at $\be$ tends to the horizontal line $e_3=e_{3,r}^{-}$. In addition, the $e_1$-axis is an asymptote of $\Sigma_r$.
 \end{itemize}

\begin{figure}[t]\centering
\includegraphics[height=8cm,width=8cm]{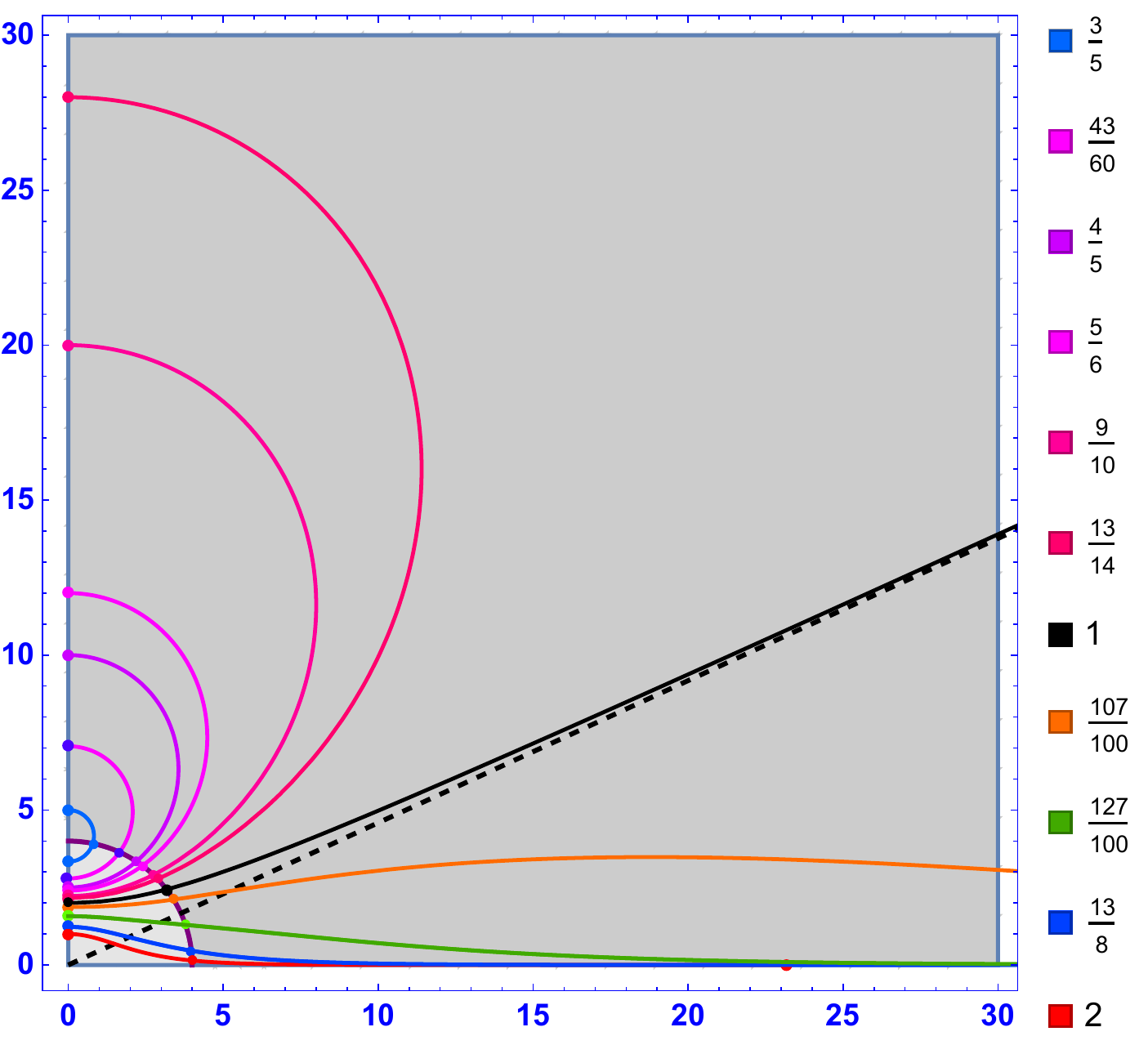}
\caption{Selected level sets $\Sigma_r$ for $r$-values specified in the legend.}\label{FIG1S4New}
\end{figure}

For $q\in {\mathtt J}_{1/2}\cap {\mathbb Q}$, we call $\Sigma_q$ the {\it modular curve of $q$}. The exceptional point $\Sigma_q\cap {\mathcal C}^*_0$ is denoted by $\be_q^*$.
Then $\Sigma_q\setminus\{\be_q^*\}$ has two connected components: $\Sigma^-_q\subset {\mathcal C}^-_0$ and $\Sigma^+_q\subset {\mathcal C}_0^+$.

\subsection*{Congruence class representative} For a given modulus
$\be$, we pick a representative $\phi$-loop $\gam_\be$, uniquely specified
by initial conditions
\[\gam_\be(0)=\|\tU_{\be}^{-1}\|\tU_\be, \qquad
\gam'_\be(0)=\|\tU_{\be}^{-1}\| \tU^*_{\be},\]
where
\[\tU_{\be}=\begin{pmatrix} -4\ri(2e_1+\lambda)\\ |\be|^2-16\end{pmatrix}, \qquad \tU^*_{\be}=\begin{pmatrix}-\overline{\tU^2}_{\be}\\
\overline{\tU^1}_{\be}\end{pmatrix}.\]
We call this is a {\it standard $\phi$-loop}; clearly, any $\phi$-loop is congruent to a standard one.
From now on we assume that $\phi$-loops are in their standard forms.

\begin{Definition}
We call the $1$-parameter family ${\mathcal G}_q=\{\gam_{\be}\mid \be\in \Sigma_q\}$ of $\phi$-loops the {\it isomonodromic family} of $q$. The terminology is motivated by the fact that every loop in ${\mathcal G}_q$ has monodromy
\[{\mathtt M}_q= \begin{pmatrix} {\rm e}^{\ri 2\pi\,q} & 0\\
0& {\rm e}^{-\ri 2\pi\, q}\end{pmatrix}.
\]
(The diagonal form of the monodromy is the reason for our choice of initial conditions for standard loops.)
\end{Definition}

 \begin{figure}[t]\centering
\includegraphics[height=5cm,width=5cm]{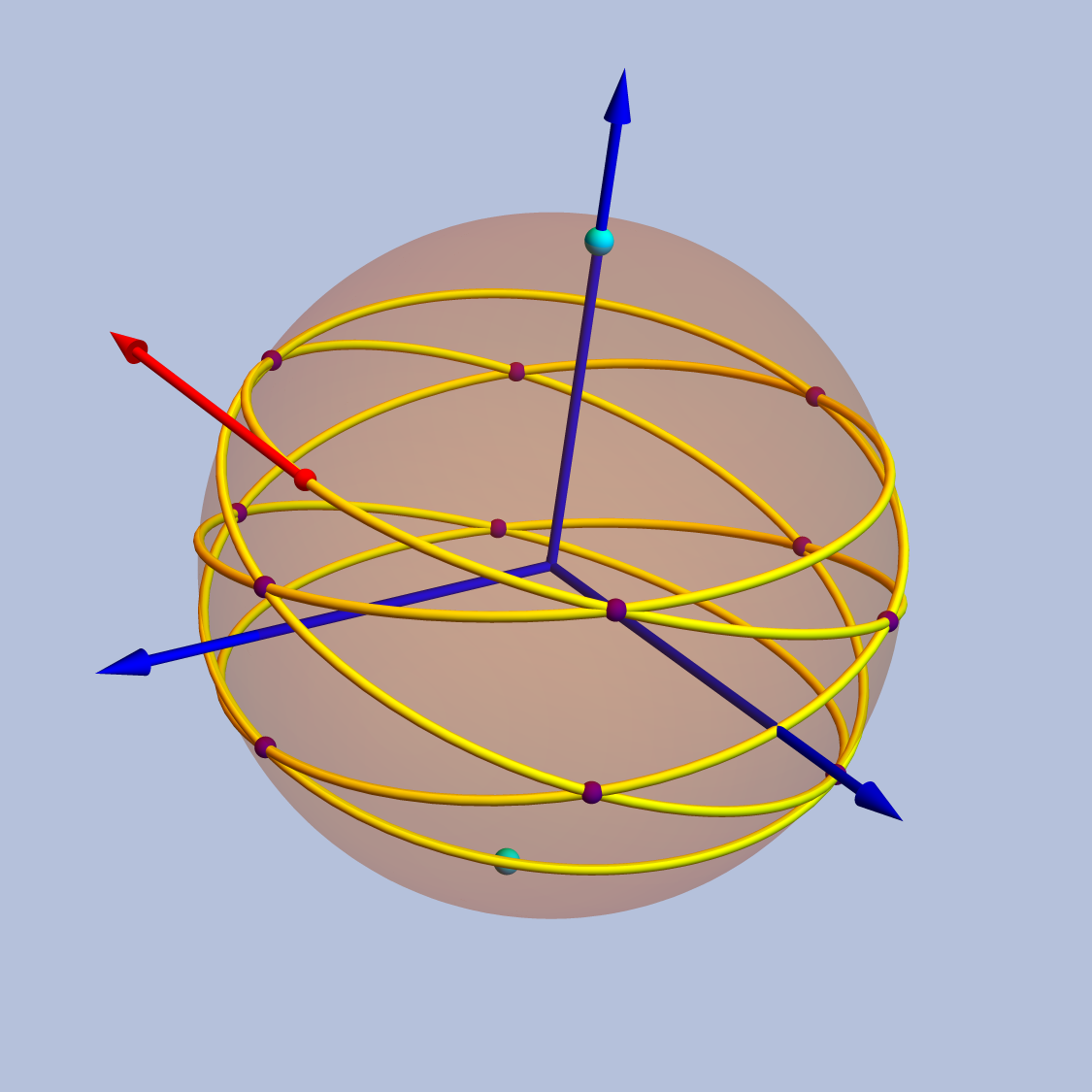}\qquad
\includegraphics[height=5cm,width=5cm]{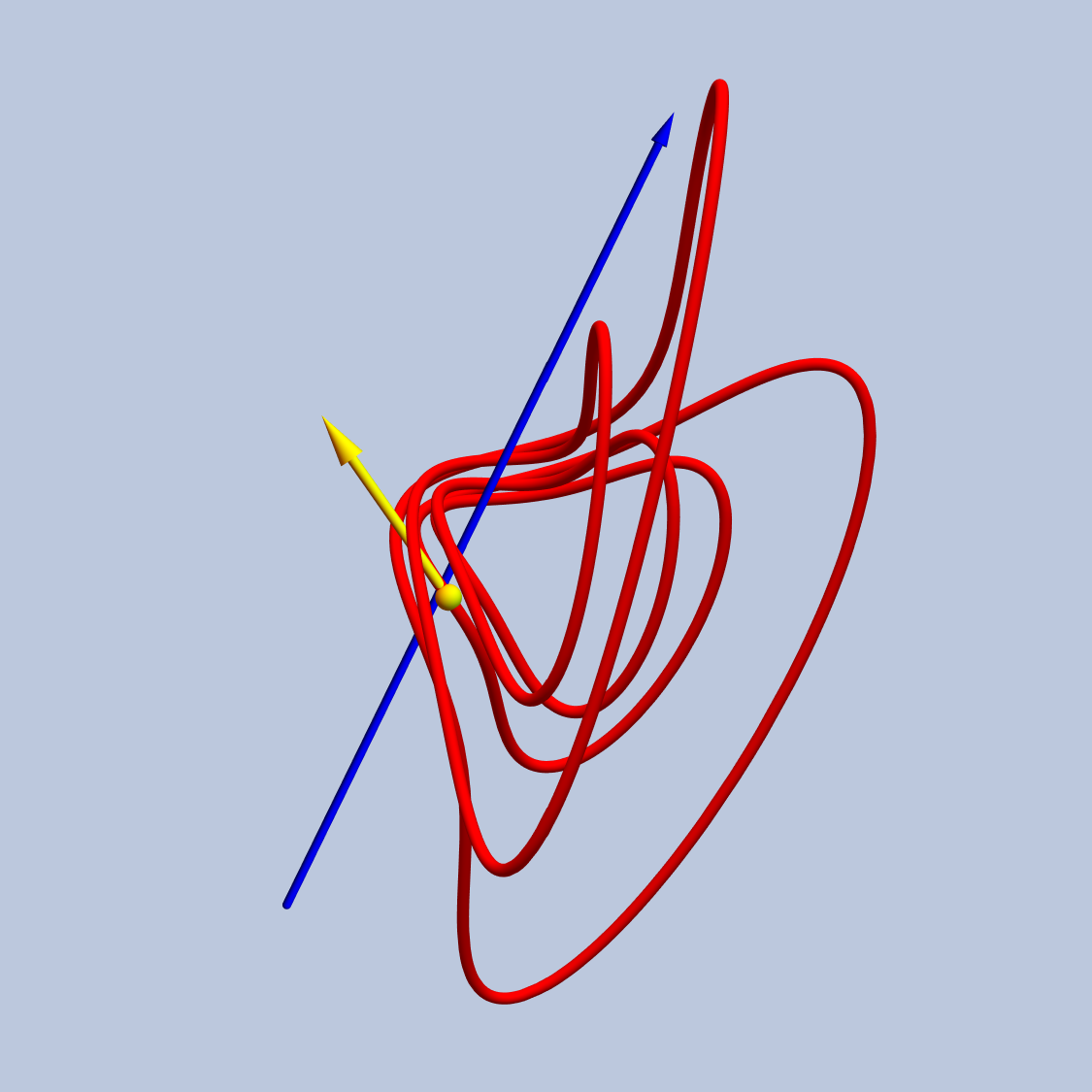}
\caption{Left: for $\be=(0.600642,2.44722)\in \Sigma_{5/6}$, the Clifford projection $\eta_{\be}$ of the loop $\gam_\be$; the blue vector passing through the cyan point is ${\bf i}=(1,0,0)$. Right: the Heisenberg projection $(\gam_{\be})_H$ of $\gam_{\be}$; the blue vector is parallel to ${\bf k}=(0,0,1)$.}\label{FIG3S4New}
\end{figure}

 \begin{figure}[t]\centering
\includegraphics[height=5cm,width=5cm]{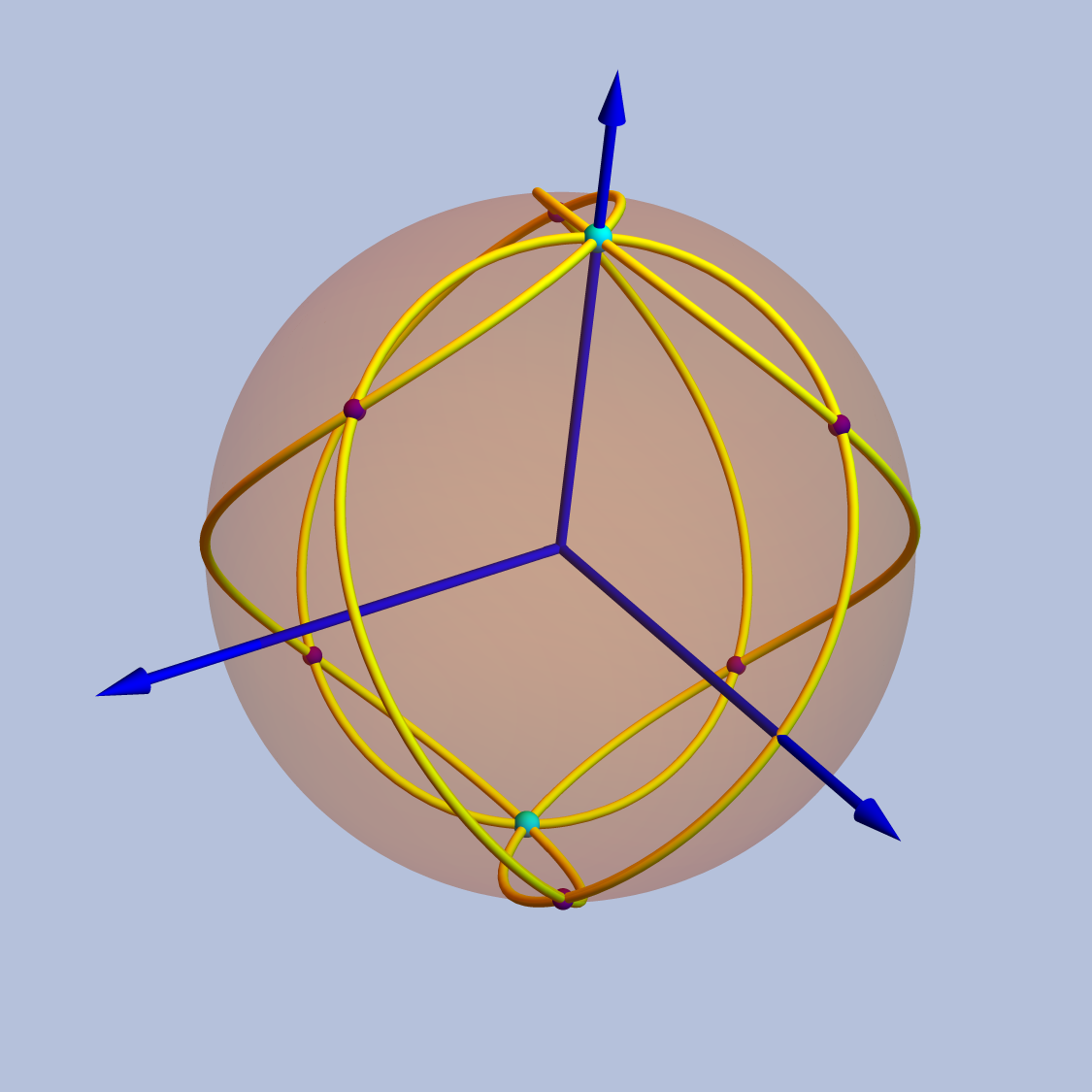}\qquad
\includegraphics[height=5cm,width=5cm]{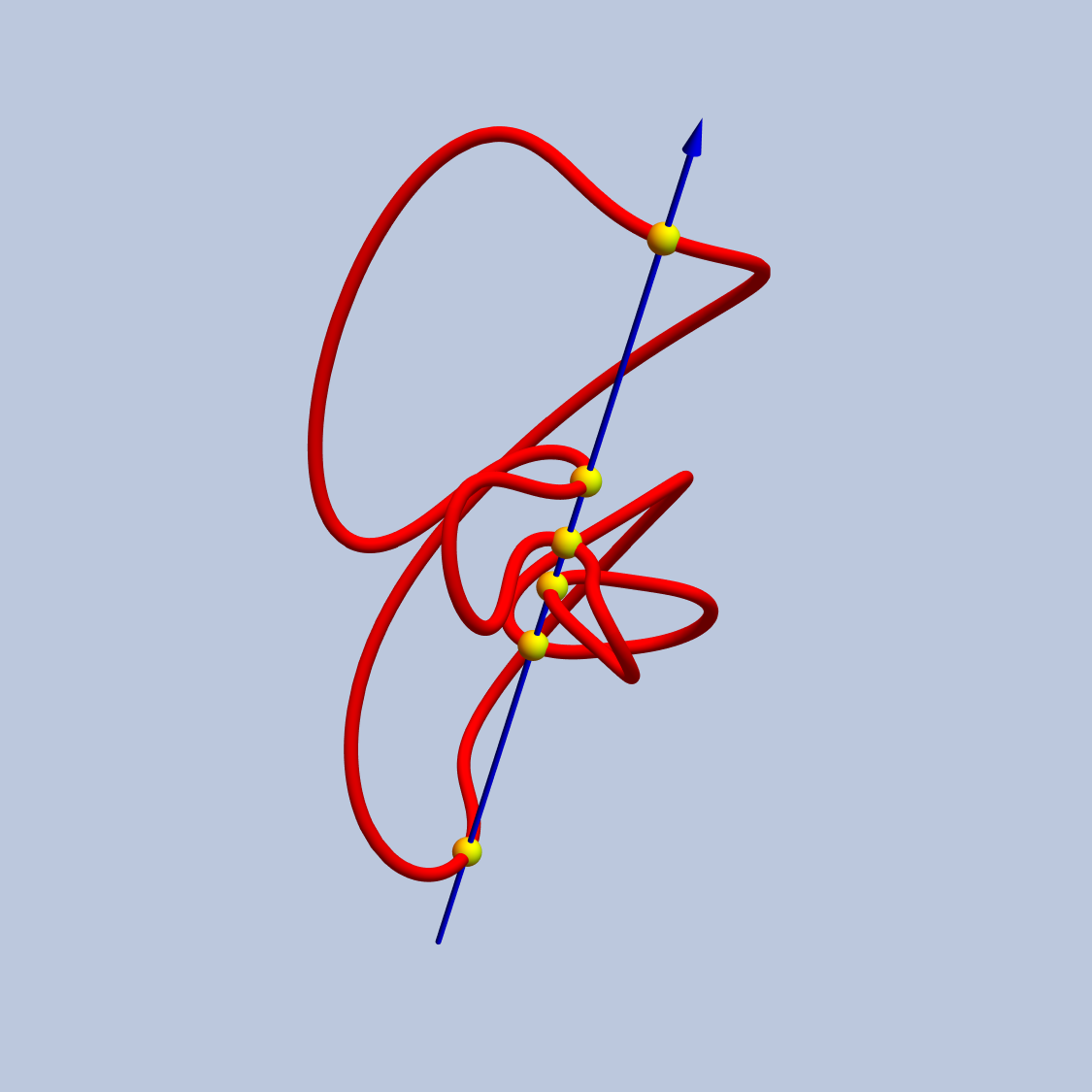}
\caption{Left: for $\be^*_{5/6} \approx (2.39412, 3.2044)$, the Clifford projection $\eta_{\be}$ of the loop $\gam_\be$. Right: the Heisenberg projection $(\gam_{\be_{5/6}^*})_H$. (Blue vectors are the same as in Figure \ref{FIG3S4New}.)}\label{FIG4S4New}
\end{figure}

Finally, we discuss the geometry of $\phi$-loops, along with several examples.

\subsection*{Geometric features} Let $\be\in \Sigma_q$ for $q=m/n\in {\mathtt J}_{1/2}\cap {\mathbb Q}$. From formula~\eqref{quadrature} for the Frenet frame, setting $\Phi_1=0$, $\Phit_2=q$ along $\Sigma_q$, and using the properties of the map $\Phit_2$, we derive the following results:
\begin{itemize}\itemsep=0pt
\item If $n$ is odd, the spin $\rs$ and the Clifford index $\cl$ of $\gam_{\be}$ are both $1$. If $n$ is even, $\rs=1/2$ and~${\cl=2}$.

\item The image of $\gam_{\be}$ is invariant by the group of order $n$ generated by the monodromy, while the image of the Clifford projection $\eta_{\be}$ is invariant by the group generated by the rotation of $2\pi/(\rs n)$ around the $x$-axis.

\item The image of $\eta_{\be^*_q}$ passes through each of the poles ${\rm N}_{\pm}=(\pm1,0,0)$ $n \rs$ times. If $\be\neq \be^*_q$, the image of $\eta_{\be}$ is bounded by the planes
$x=\eta^1_{\be}(0)\in (0,1)$ and $x=\eta^1_{\be}(\omega_{\be}/2)=-\eta^1_{\be}(0)\in (-1,0)$, where $\eta^1_\be$ denotes the first
component of $\eta_{\be}$.

\begin{figure}[t]\centering
\includegraphics[height=5.4cm]{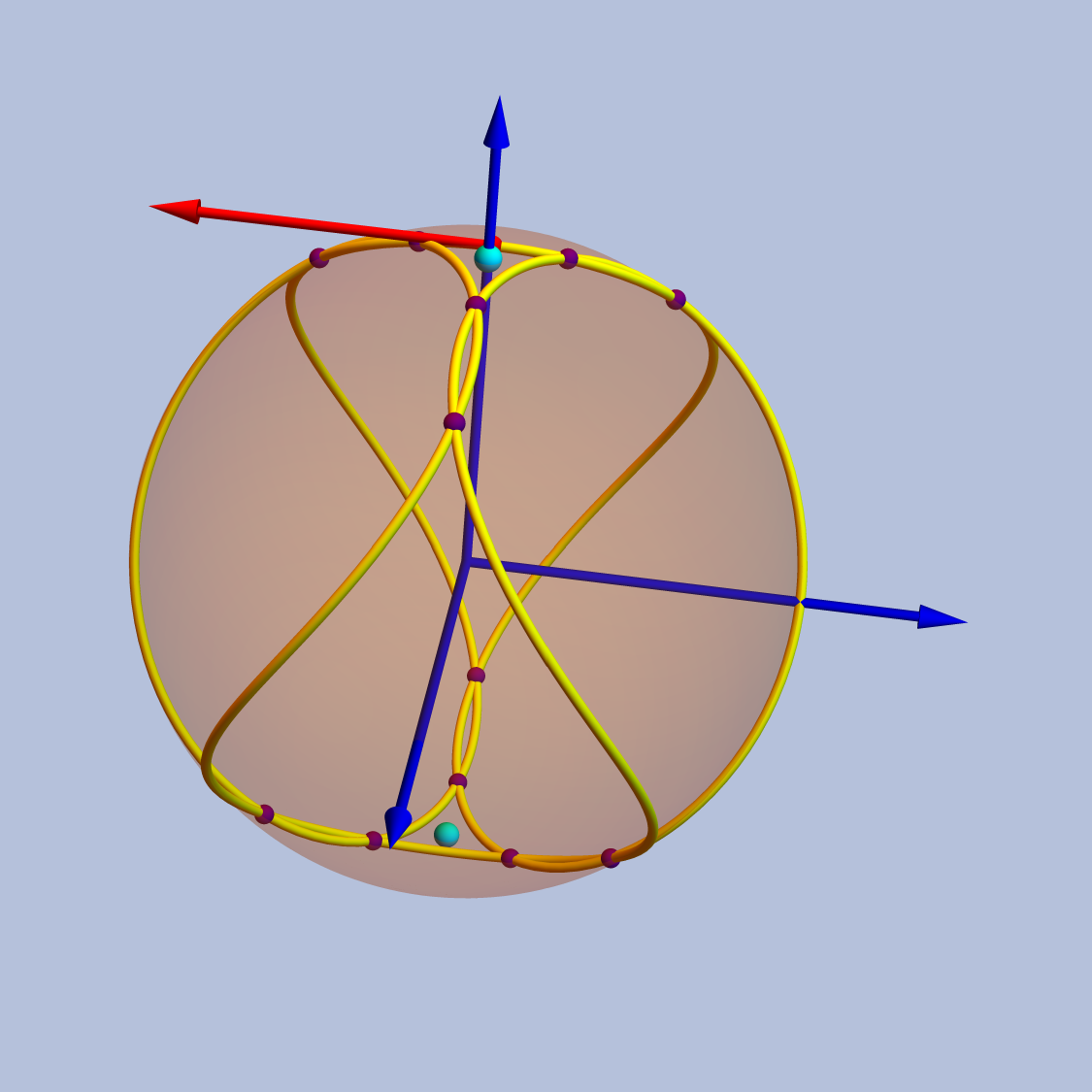}\qquad
\includegraphics[height=5.4cm]{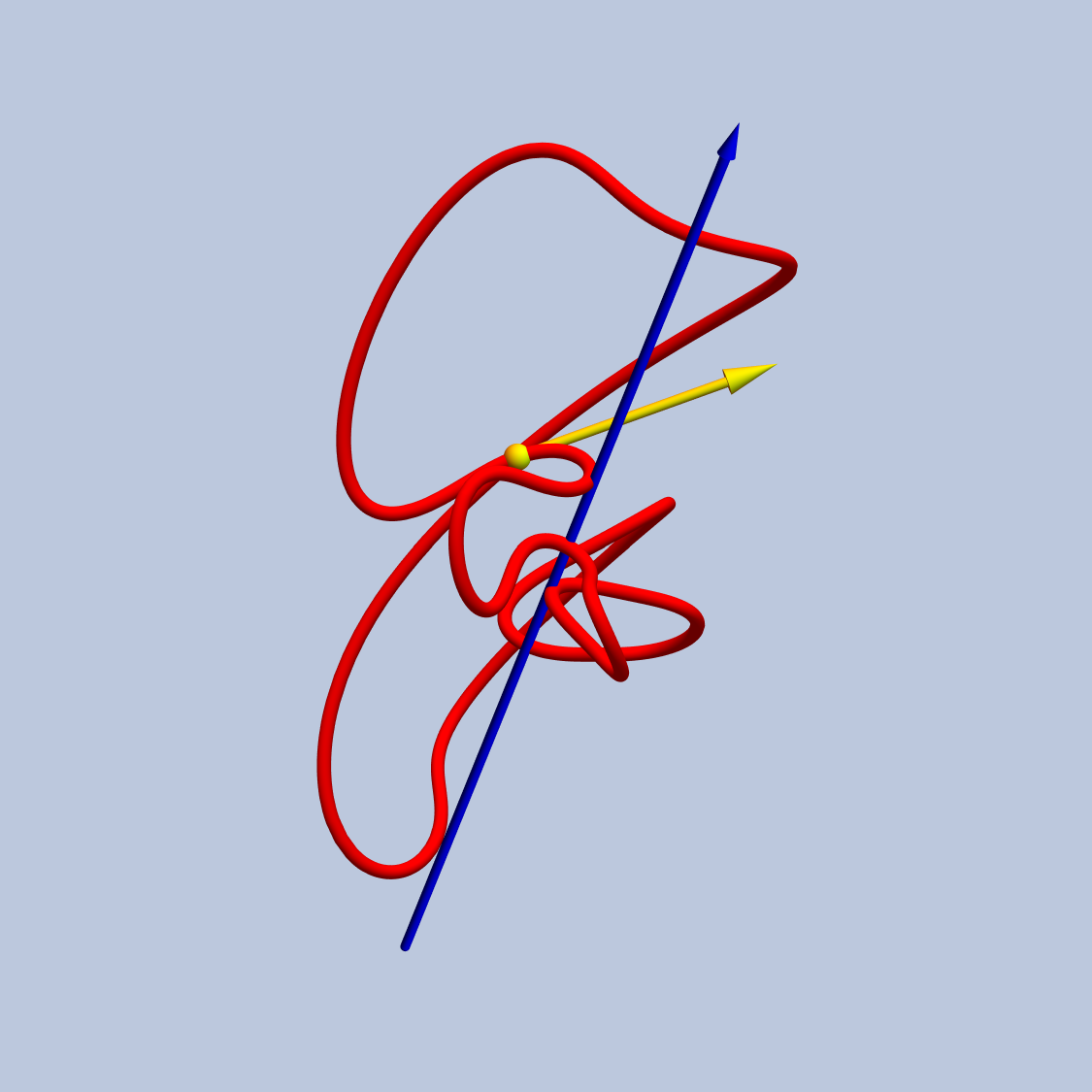}
\caption{\small{Left: the Clifford projection $\eta_{\be}$ of the loop with modulus $\be=(2.59382, 3.35732)\in \Sigma_{5/6}$ The blue vector passing through the cyan point is ${\bf i}=(1,0,0)$. Right: the Heisenberg projection $(\gam_{\be})_H$ of $\gam_{\be}$. The blue vector is parallel to ${\bf k}=(1,0,0)$.}}
\end{figure}

\item Let ${\mathtt S}^1_x\subset \rS^2$ be the equator $\rS^2 \cap Oyz$
oriented counterclockwise with respect to ${{\bf i}=(1,0,0)}$. Taking the homotopy class of ${\mathtt S}^1_x$ as generator, we identify the fundamental group $\pi_1\bigl(\rS^2\setminus \{{\rm N}_{\pm}\}\bigr)$ with $\Z$.
Similarly, if we let ${\mathtt S}^1_z\subset \R^3\setminus Oz$ be the unit circle centered at the origin and contained in the plane $z=0$, equipped with the counterclockwise orientation with respect to upward oriented $z$-axis,
then that generator allows us to identify the fundamental group $\pi_1\bigl(\R^3\setminus Oz\bigr)$ with $\Z$.
Using these identifications, for a given $\be \in \Sigma^+_q$ the homotopy class $[\eta_{\be}]\in \pi_1\bigl(\rS^2\setminus \{{\rm N}_{\pm}\}\bigr)$ is $2\rs (n-m)$; equivalently, the homotopy class $\big[\widetilde{\gam}_{\be}\big]\in \pi_1\bigl(\R^3\setminus Oz\bigr)$ is $2\rs (n-m)$.
For $\be\in \Sigma^-_q$, the homotopy classes $[\eta_{\be}]$ and $\big[\widetilde{\gam}_{\be}\big]$ are both $2 \rs m\operatorname{sign}(n-m)$.

\begin{figure}[t]\centering
\includegraphics[height=5.4cm]{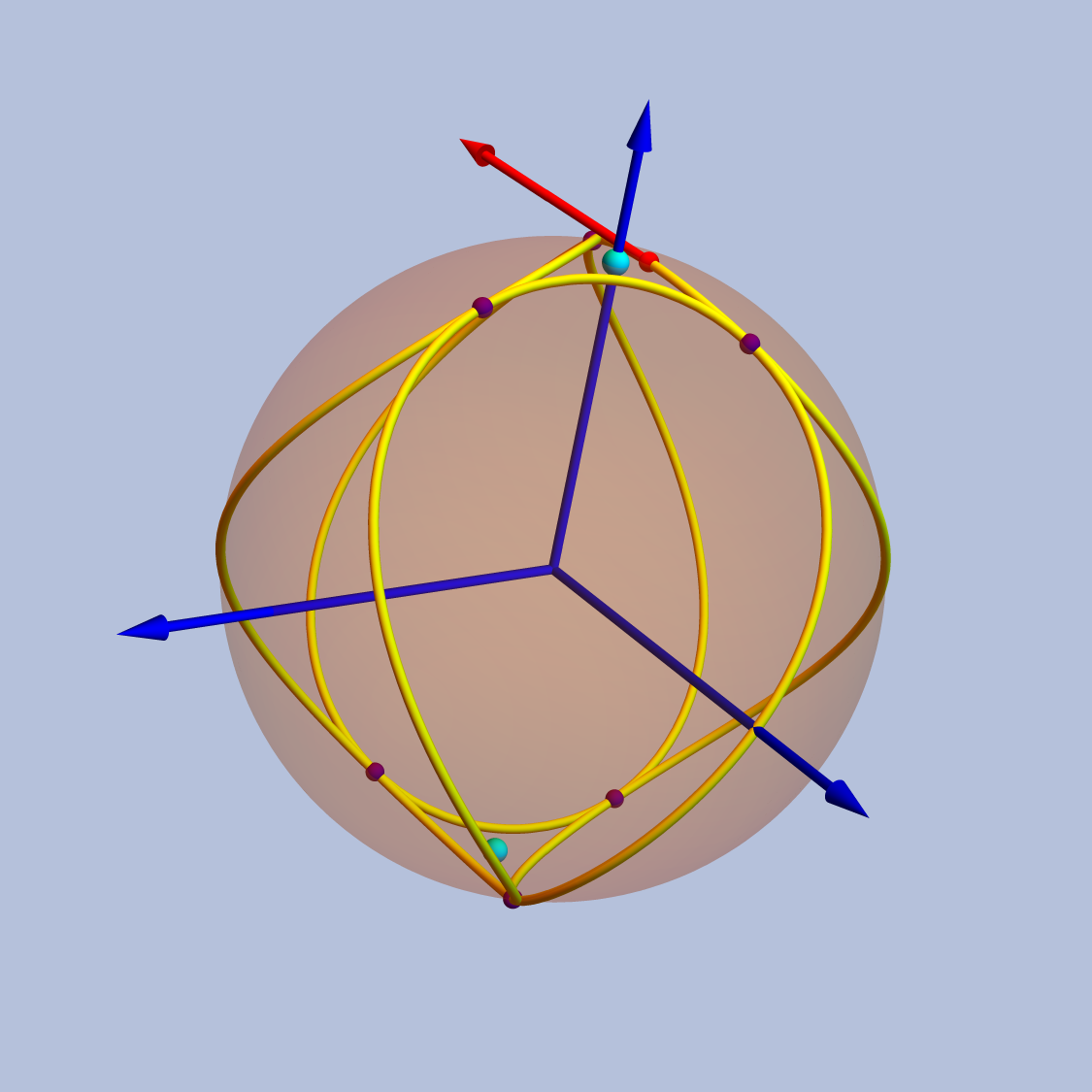}\qquad
\includegraphics[height=5.4cm]{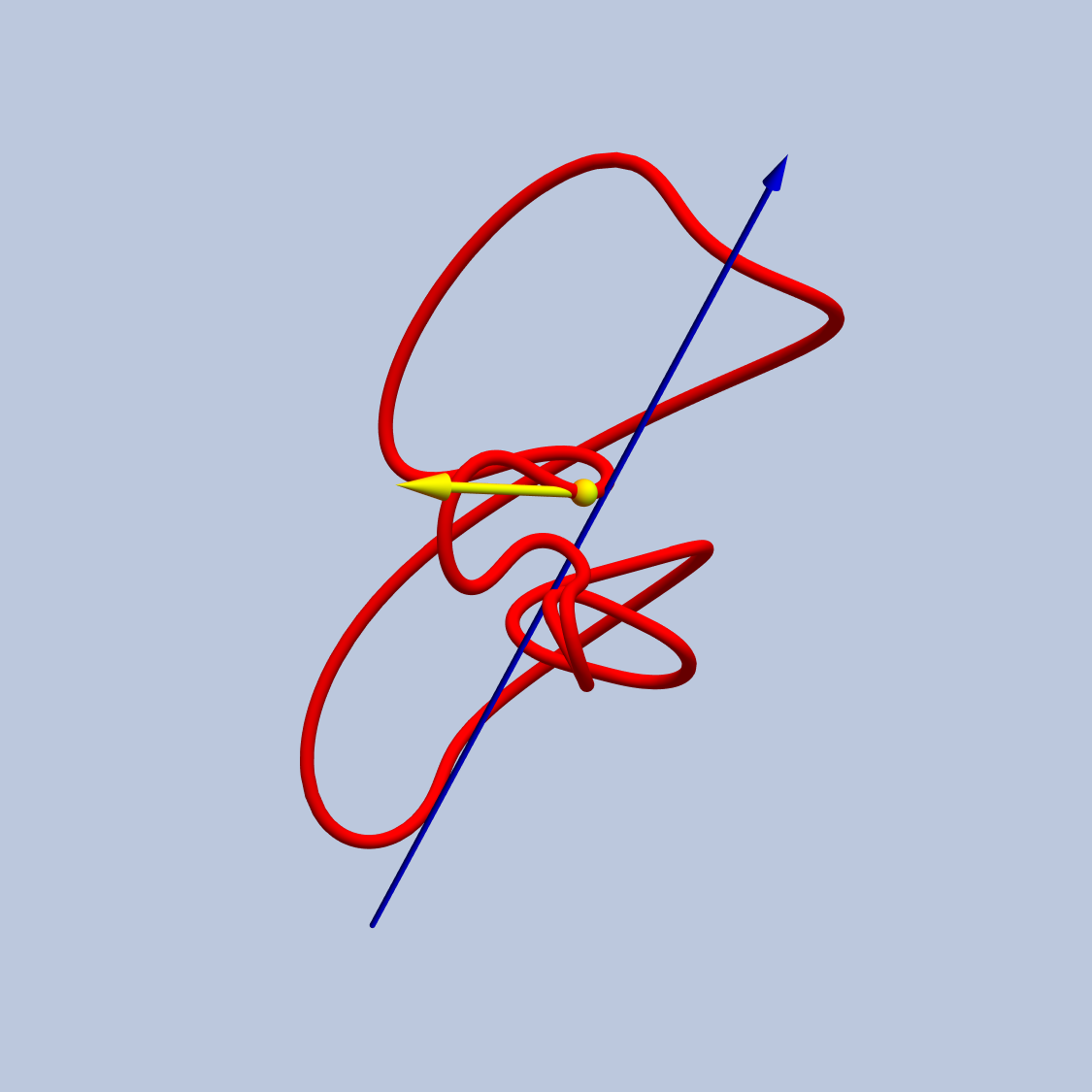}
\caption{Left: the Clifford projection $\eta_{\be'}$ of the loop with modulus $\be'=(2.65939, 3.41132)\in \Sigma_{5/6}$ The blue vector passing through the cyan point is ${\bf i}=(1,0,0)$. Right: the Heisenberg projection $(\gam_{\be'})_H$ of~$\gam_{\be'}$. The blue vector is parallel to ${\bf k}=(1,0,0)$.}\label{FIG6S4New}
\end{figure}

\item As $\be$ limits to $\be_q^{\pm}$, $\eta_{\be}$ tends to the (standard) Legendrian lifts of the corresponding multiply-covered circles.

\item If $\be\in \Sigma_q$, the Clifford projection $\eta_{\be}$ is a simple curve if and only if $m=n-1$ and $\|\be-\be_q^+\|<\varepsilon_q$, where $\varepsilon_q>0$ depends on $q=(n-1)/n$.

\item {\samepage There exist countably many $\be\in \Sigma_q$ such that the evolution $\hat{\gamma}_{\be}(-,t)$ of $\gam_{\be}$ is periodic in $t$. More precisely, let ${\mathtt P}_q\colon \Sigma_q\to \R$ be the real-analytic function
\[
{\mathtt P}_q(\be)= \frac{\pi\bigl(e_3^2-e_1^2\bigr)|\be |}{16\lambda {\rm K}(m)}
\]}%
and suppose its range is the interval ${\mathtt I}_q\subset \R$.
From Proposition~\ref{timeevolution} and Remark \ref{timeevolution2}, it follows that the evolution $\hat{\gamma}_{\be}(-,t)$ of a $\phi$-loop $\gam_{\be}\in {\mathcal G}_q$ is periodic in $t$ if and only if ${\mathtt P}(\be)=\widetilde{m}/\widetilde{n}\in {\mathbb Q}$. The time period is $2\pi\widetilde{n}n/h\lambda_{\be}$, where $h={\rm gcd}(n,\widetilde{m})$.
\end{itemize}

\begin{figure}[t]\centering
\includegraphics[height=5.4cm]{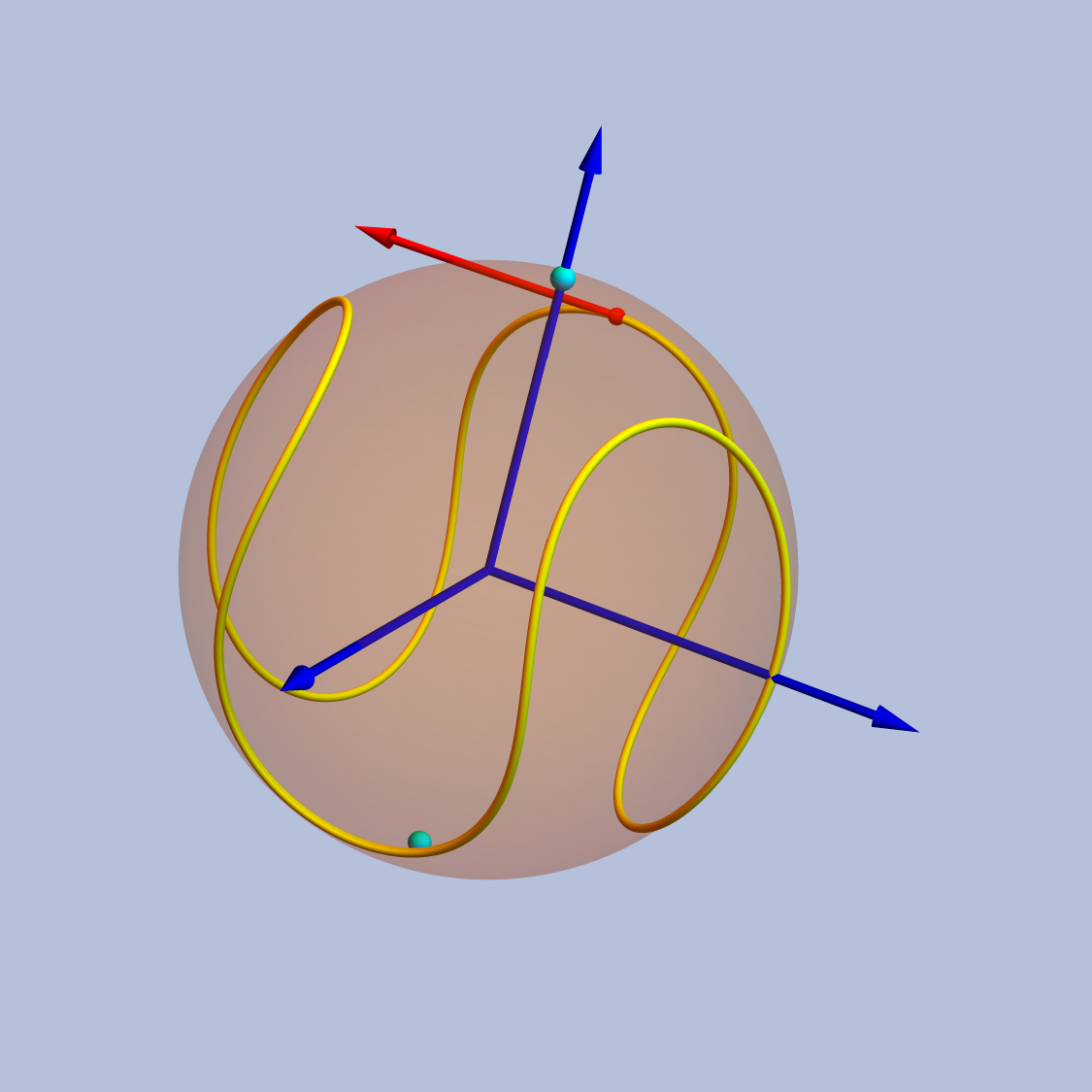}\qquad
\includegraphics[height=5.4cm]{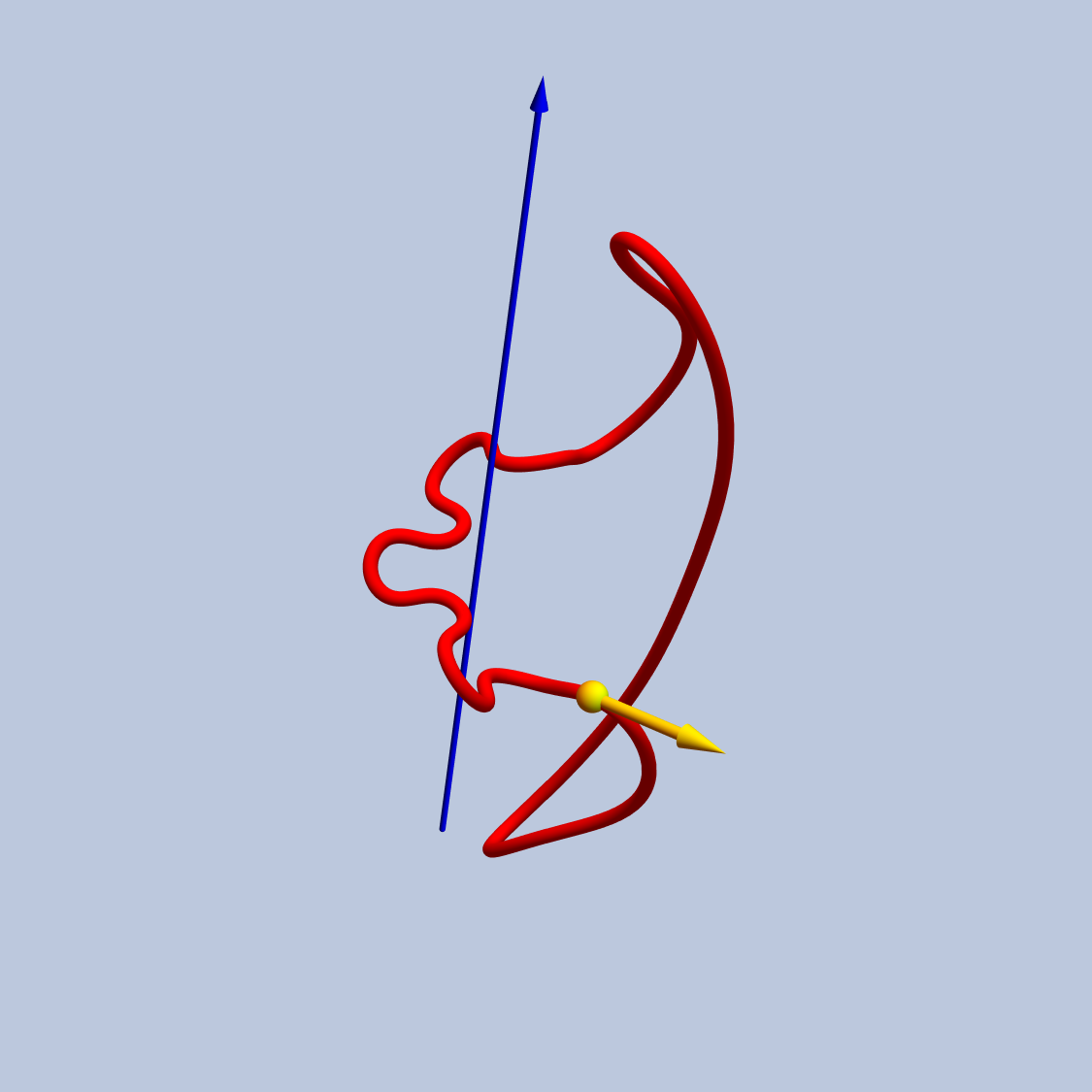}
\caption{Left: the Clifford projection $\eta_{\widetilde{\be}}$ of the loop with modulus $\widetilde{\be}=(3.99723, 5.1619)\in \Sigma_{5/6}$. The blue vector passing through the cyan point is ${\bf i}=(1,0,0)$. Right: the Heisenberg projection $(\gam_{\widetilde{\be}})_H$ of $\gam_{\widetilde{\be}}$. The blue vector is parallel to ${\bf k}=(1,0,0)$.}\label{FIG7S4New}
\end{figure}

\subsection*{Examples} We consider the isomonodromic family with $q=5/6$.
Given $\be,\be'\in \Sigma_q$, let $ \Sigma_q(\be,\be')$ be the oriented, open arc of $\Sigma_q$ joining $\be$ and $\be'$. The limit points of $\Sigma_{5/6}$ are $\be^-_{5/6}=(0,12/5)$ and $\be^+_{5/6}=(0,12)$. The exceptional point is $\be_{5/6}^*=(2.39412,3.2044)$.
\begin{itemize}\itemsep=0pt
\item Let $\be\in \Sigma_{5/6}\bigl(\be^-_{5/6},\be^*_{5/6}\bigr)$. Then the image of $\eta_{\be}$ is contained in $ \rS^2\setminus \{{\rm N}^{\pm}\}$ and has 12 ordinary double points (see Figure \ref{FIG3S4New}). Since $\eta_{\be}(s)$ for $s\in [0,3\omega_{\be}]$ turns clockwise $5$ times around the $x$-axis, then $[\eta_{\be}]\in \pi_1\bigl(\rS^2\setminus {\rm N}^{\pm}\bigr)$ is $-5$. Meanwhile, the image of the Heisenberg projection $(\gam_{\be})_H$ is a Legendrian unknot contained in $\R^3\setminus Oz$ and $(\gam_{\be})_H(s)$, $s\in [0,6\omega_{\be}]$, turns clockwise $5$ times around the upward-oriented vertical axis. Thus, $[(\gam_{\be})_H]\in \pi_1\bigl(\R^3\setminus Oz\bigr)$ is $-5$.

\item For the exceptional modulus $\be^*_{5/6}$, $\eta_{\be^*_{5/6}}$ passes three times through the poles of $\rS^2$ and has six additional ordinary double points, while
$\bigl(\gam_{\be^*_{5/6}}\bigr)_H$ intersects the $Oz$-axis in six points (see Figure \ref{FIG4S4New}).

\item Let $\be\in \Sigma_{5/6}\bigl(\be^*_{5/6},\be^+_{5/6}\bigr)$. The image of
$\eta_{\be}$ is contained in $\rS^2\setminus \{{\rm N}^{\pm}\}$ and $\eta_{\be}(s)$, $s\in [0,3\omega_{\be}]$, turns counterclockwise once around the $x$-axis, so that $[\eta_{\be}]\in \pi_1\bigl(\rS^2\setminus \{{\rm N}^{\pm}\}\bigr)$ is $1$.
Meanwhile, the image of the Heisenberg projection $(\gam_{\be})_H$ is a Legendrian unknot
contained in $\R^3\setminus Oz$ and $(\gam_{\be})_H(s)$, $s\in [0,6\omega_{\be}]$, turns counterclockwise once around the upward-oriented vertical axis. Hence, $[(\gam_{\be})_H]\in \pi_1\bigl(\R^3\setminus Oz\bigr)$ is $1$.

\item
Let $\be'=(2.65939, 3.41132)\in \Sigma_{5/6}$. $\eta_{\be'}$ has six tangential double points (see Figure \ref{FIG6S4New}).
 If~$\be\in \Sigma_{5/6}\bigl(\be',\be^+_{5/6}\bigr)$, $\eta_\be$ is a simple curve (see Figure \ref{FIG7S4New}).

\item The function ${\mathtt P}_{5/6}$ has an absolute minimum ${\mathtt p}_- \!\approx\! 0.408156$
attained at $\be^{\dag}\!\approx\! (2.7904, 3.5253)$.
The function is decreasing on $\Sigma_{5/6}\bigl(\be^-_{5/6},\be^{\dag}\bigr)$, increasing on
$\Sigma_{5/6}\bigl(\be^{\dag},\be^+_{5/6}\bigr)$ and tends to ${\mathtt p}_+\approx 6.75$ as $\be\rightarrow \be^+_{5/6}$. Thus, for every $p=\widetilde{m}/\widetilde{n}\in ({\mathtt p}_- ,{\mathtt p}_+)\cap {\mathbb Q}$
there exist a~time-periodic ${\gam}_{\check{\be}_p}\in {\mathcal G}_q$ whose evolution has period $12\pi\widetilde{n}/{\rm gcd}(6,\widetilde{m})\lambda_{\be_p}$.
For instance, let $\check{{\bf e }}=(3.245612, 10.568031)\in \Sigma_{5/6}$, then ${\mathtt P}_{5/6}(\check{{\bf e }})=5$. Hence, $\gam_{\check{{\bf e }}}$ is time-periodic and
the time-period of $\widehat{\gam}_{\check{{\bf e }}}$
is $6\tau$, $\tau=0.229849$.
Figure \ref{FIG11S4New} depicts the evolving curves $\widehat{\gamma}_{\check{{\bf e }}}(-,t)$, $t=0, 0.05,0.1,0.12$, and the orbit $t\to \widehat{\gam}_{\check{{\bf e }}}(0,t)$.
\end{itemize}

 \begin{figure}[t]\centering
\includegraphics[height=5.4cm]{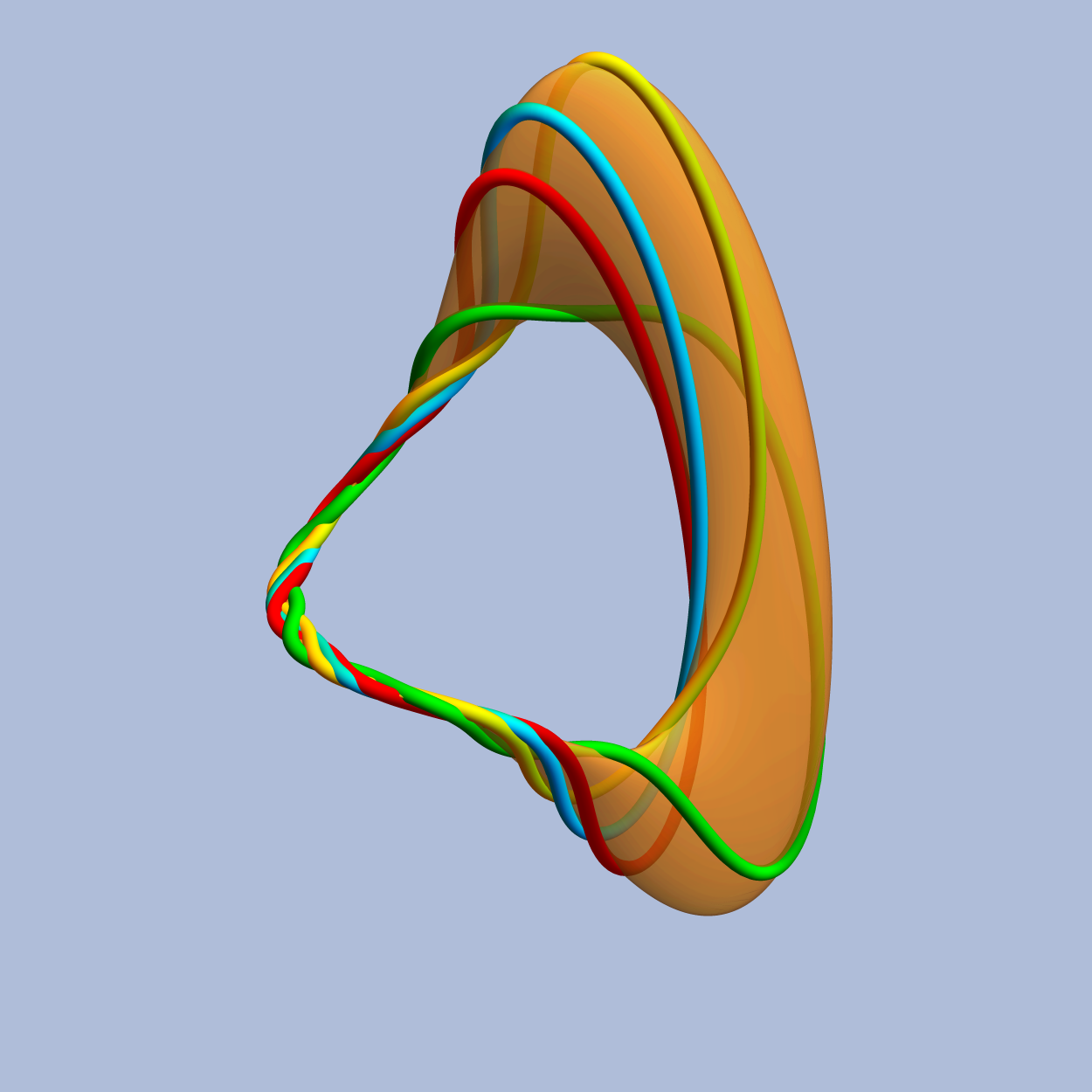}\qquad
\includegraphics[height=5.4cm]{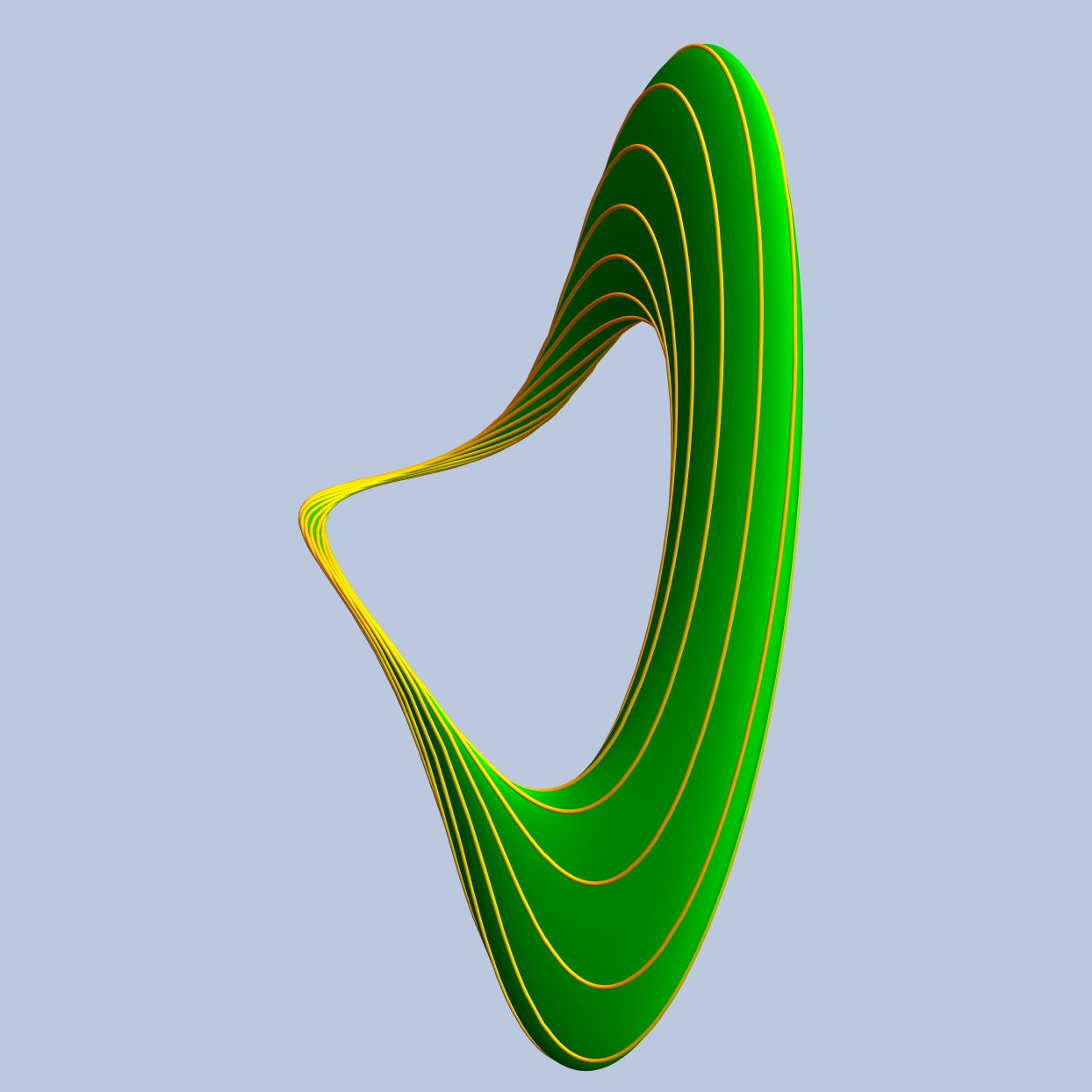}
\caption{$\widehat{\gamma}_{\check{{\bf e }}}(-,t)$, $t=0, 0.05,0.1,0.12$ and the orbit $t\to \widehat{\gam}_{\check{{\bf e }}}(0,t)$.}\label{FIG11S4New}
\end{figure}

\section{Discussion}\label{sec5}
To summarize, we have shown that, in the context of pseudo-Hermitian geometry on $S^3$,
there are flows for Legendrian curves that induce curvature evolution by any integrable PDE in the mKdV hierarchy. Moreover, we constructed a natural symplectic structure
on the space of periodic Legendrian curves relative to which each of these flows is Hamiltonian. For the flow $\VZ_1$ which induces evolution by the mKdV equation itself, we have carried out a detailed analysis of curves that are stationary (i.e.,~whose flows are congruent to the initial curve), identifying closure conditions and obtaining a complete description of periodic stationary curves in a~significant special case. These results naturally suggest further questions and directions for research.\looseness=1

First, it is worth highlighting that the closure conditions for stationary curves could
only be obtained because these curves are integrable by quadratures (see Section~\ref{quadsec}). In fact, $\VZ_1$-stationary curves arise as projections to $S^3$ of trajectories of a completely integrable contact-Hamiltonian system on ${\rm U}(2)\times \R^3$.
It is natural to ask if this holds for higher flows in the hierarchy. That is, for $n>1$
are $\VZ_n$-stationary curves also the projections of the trajectories of some completely integrable finite-dimensional contact-Hamiltonian system?

Next, as a completely integrable PDE the mKdV equation has a rich structure, including infinitely many conservation laws (as mentioned above), but also a B\"acklund transformation which generates new solutions from old ones \cite{KW,Wa}. Since this transformation can be realized as a gauge transformation or `dressing' at the level of the Lax pair, and these Lax equations are equivalent to the
AKNS-type system satisfied by our ${\rm U}(2)$-valued Frenet frame, it is natural to ask if
there is a geometric transformation for Legendrian curves that corresponds to the
B\"acklund transformation for curvature. If available, this transformation could be
used to generate new and interesting solutions to our flows, starting with some of the stationary curves we have obtained above.

Last, the pseudo-Hermitian 3-sphere has, as mentioned in the introduction, a non-compact dual $A^3$, the 3-dimensional
anti-de~Sitter space equipped with its pseudo-Hermitian structure of constant Webster curvature $-1$. In this case, we expect that the defocusing mKdV equation and its associated hierarchy can be realized by flows of Legendrian curves in $A^3$. It is also possible that there are integrable flows for null curves in this space, which would likely be related to the KdV hierarchy. If this is the case, it is natural to ask whether the Miura transformation \cite{Mi} between KdV and defocusing mKdV equations has some geometric realization that mediates between flows of Legendrian and null curves in $A^3$.

\subsection*{Acknowledgements}
We thank Cornelia Vizman for very helpful discussions of the hat calculus and the reviewers for their useful comments and suggestions. The authors gratefully acknowledge the support and warm hospitality of the Department of Mathematics at the Politecnico di Torino.
A.~Calini would also like to thank the Isaac Newton Institute for Mathematical Sciences for support and hospitality during the programme {\em Dispersive hydrodynamics: mathematics, simulation and experiments, with applications in nonlinear waves} when part of this work was undertaken.
This work was partially supported by PRIN 2017 {\em Real and Complex Manifolds: Topology, Geometry and holomorphic dynamics} (protocollo 2017JZ2SW5-004);
by the Gruppo Nazionale per le Strutture Algebriche, Geometriche e le loro Applicazioni within the Istituto Nazionale di Alta Matematica ``Francesco Severi'' (INdAM); and by EPSRC grant number EP/R014604/1.

\pdfbookmark[1]{References}{ref}
\LastPageEnding

\end{document}